\documentclass[12pt,reqno,english]{amsart}%
\usepackage{amsfonts,amsmath,latexsym,verbatim,amscd,mathrsfs,color,array}
\usepackage[colorlinks=true]{hyperref}
\usepackage{amsmath,amssymb,amsthm,amsfonts,graphicx,color}
\usepackage{amssymb}
\usepackage{pdfsync}
\usepackage{epstopdf}
\usepackage{cite}
\usepackage{graphicx}
\usepackage{amsmath}
\usepackage{amsfonts}%
\setcounter{MaxMatrixCols}{30}
\providecommand{\U}[1]{\protect\rule{.1in}{.1in}}

\setlength{\textwidth}{16cm}
\setlength{\textheight}{21cm}
\addtolength{\oddsidemargin}{-1cm}
\addtolength{\evensidemargin}{-1cm}

\numberwithin{equation}{section}

\newtheorem{theorem}{Theorem}[section]
\newtheorem{corollary}{Corollary}[section]
\newtheorem{lemma}{Lemma}[section]

\newtheorem{remark}{Remark}[section]
\newtheorem{definition}{Definition}[section]

\numberwithin{equation}{section}

\newcommand{\bbr}{\mathbb{R}}

\newcommand{\bbn}{\mathbb{N}}
\newcommand{\ve}{\varepsilon}
%%%%%%%%%%%%%%%%%%%%%%%%%%%%%%%%%%%%%%%%%%%%%%%%%%%%%%%%%%%%%%%%%%%%%%

\newcommand{\bd}{\begin{definition}}
\newcommand{\ed}{\end{definition}}

\newcommand{\br}{\begin{remark}}
\newcommand{\er}{\end{remark}}

\newcommand{\be}{\begin{equation}}
\newcommand{\ee}{\end{equation}}

\newcommand{\bc}{\begin{corollary}}
\newcommand{\ec}{\end{corollary}}
%%%%%%%%%%%%%%%%%%%%%%%%%%%%%%%%%%%%%%%%%%%%%%%%%%%%%%%%%%%%%%%%%%%%%%%%%%

\begin{document}

\title[Positive solutions of the Gross--Pitaevskii equation]{Positive solutions of the Gross--Pitaevskii equation for energy critical and supercritical nonlinearities}

\author[D. E.  Pelinovsky]{Dmitry E.Pelinovsky}
\address{ Department of Mathematics and Statistics, McMaster University, Hamilton,
	Ontario, Canada, L8S 4K1}
\email{dmpeli@math.mcmaster.ca}

\author[J. Wei]{Juncheng Wei}
\address{\noindent Department of Mathematics, University of British Columbia,
Vancouver, B.C., Canada, V6T 1Z2}
\email{jcwei@math.ubc.ca}

\author[Y. Wu]{Yuanze Wu}
\address{\noindent  School of Mathematics, China
University of Mining and Technology, Xuzhou, 221116, P.R. China }
\email{wuyz850306@cumt.edu.cn}

\begin{abstract}
We consider positive and spatially decaying solutions to the following Gross--Pitaevskii equation with a harmonic potential:
\begin{eqnarray*}
-\Delta u+|x|^2u=\omega u+|u|^{p-2}u\quad\text{in }\mathbb{R}^d,
\end{eqnarray*}
where $d\geq3$, $p>2$ and $\omega>0$.  For $p=\frac{2d}{d-2}$ (energy-critical case) there exists a ground state $u_\omega$  if and only if $\omega\in(\omega_*, d)$, where $\omega_* =1$ for $d=3$ and $\omega_*=0$ for $d\geq4$.  We give a precise description on asymptotic behaviors of $u_\omega$ as $\omega\to\omega_*$ up to the leading order term for different values of $d \geq 3$.  When $ p>\frac{2d}{d-2}$ (energy-supercritical case) there exists a singular solution $u_\infty$ for  some $\omega\in(0, d)$. We compute the Morse index of $u_\infty$ in the class of radial functions and show   that the Morse index of $u_\infty$ is infinite in the oscillatory case, is equal to $1$ or $2$ in the monotone case for $p$ not large enough and is equal to $1$ in the monotone case for $p$ sufficiently large.

\vspace{3mm} \noindent{\bf Keywords:} Gross--Pitaevskii equation; Critical and supercritical nonlinearity; Positive solutions; Asymptotic behavior; Morse index.

\vspace{3mm}\noindent {\bf AMS} Subject Classification 2010: 35B09; 35B33; 35B40; 35J20.%

\end{abstract}

\date{}
\maketitle

\section{Introduction}

In this paper, we consider positive and spatially decaying solutions to the following stationary Gross--Pitaevskii equation with a harmonic potential:
\begin{eqnarray}\label{eq0001}
-\Delta u+|x|^2u=\omega u+|u|^{p-2}u\quad\text{in }\mathbb{R}^d,
\end{eqnarray}
where $d\geq3$, $p>2$ and $\omega>0$.

\vskip0.2in

The stationary equation \eqref{eq0001} is a classical model to describe
the Bose--Einstein condensate with attractive inter-particle interactions under magnetic trap (cf. \cite{TW98}) if $d=1,2,3$ and $p=4$ (the cubic case) or $p = 6$ (the quintic case). In this context, $\psi(t,x) = e^{-i\omega t} u(x)$ is a standing wave solution of the time-dependent Gross--Pitaevskii equation
\begin{eqnarray*}
i \partial_t\psi=-\Delta\psi+|x|^2\psi-|\psi|^{p-2}\psi\quad\text{in }\mathbb{R}^d,
\end{eqnarray*}
where $\psi$ stands for the macroscopic wave function, $|x|^2$ is an isotropic  trapping potential that confines the Bose--Einstein condensate, and the nonlinear term corresponds to attractive inter-atomic interactions. Positive and spatially decaying solutions are called the bright solitons in the physics literature.

\vskip0.2in

Since the operator $-\Delta+|x|^2$ is compact in $L^2(\bbr^d)$, the energy-subcritical case $2<p<\frac{2d}{d-2}$ can be studied by classical variational methods or bifurcation methods (cf. \cite{Fuk,KW94,S11}).
On the other hand, energy-critical $p = \frac{2d}{d-2}$ and energy-supercritical
$p > \frac{2d}{d-2}$ cases were less investigated in the literature
(cf. \cite{S11,SK12,SKW13}, \cite{BFPS21,Ficek,PS22}, and \cite{Gustaf}).

\vskip0.2in

In the energy-critical case $p = \frac{2d}{d-2}$ with $d \geq 3$, based on the well-known Gidas-Ni-Nirenberg theorem (cf. \cite{GNN81}), the existence of positive and spatially decaying solutions of the stationary equation \eqref{eq0001} has been shown in \cite{S11,SK12,SW13} for $\omega\in(\omega_*, d)$, where
\begin{eqnarray}\label{eq9911}
\omega_*=\left\{\aligned&1, \quad d=3,\\
&0,\quad d\geq4.
\endaligned
\right.
\end{eqnarray}
However, besides the existence and nonexistence of solutions,
it is also interesting for critical elliptic equations to study the concentration phenomenon and the asymptotic behavior of solutions
for $\omega \to \omega_*$. These problems were initialed by Brez\'is {\em et al.} in \cite{B86,BN83,BP89} in the context of
the following Dirichlet problem
\begin{eqnarray}\label{eq1002}
\left\{ \begin{array}{cl} -\Delta u+ a(x)u =\omega u+|u|^{\frac{4}{d-2}}u & \quad\text{in }\Omega,\\
u(x)=0, &\quad\text{on }\partial\Omega,
\end{array} \right.
\end{eqnarray}
where $\Omega\subset\bbr^d$ ($d\geq3$) is a bounded domain with smooth boundary and $a(x)$ is a smooth weight (cf. \cite{AGGPV21,CPP21,dDM06,D02,E04,FKK20,FKK21,H91,HV01,I15,IV16,IV18,MP02,MP03,MP10,P22,R89,R90}). The concentration phenomenon of solutions of the Dirichlet problem \eqref{eq1002} depends on the geometry of the domain $\Omega$.  More precisely, solutions concentrate around the critical points of the Robin function of the domain $\Omega$.  To our best knowledge, the concentration phenomenon and the asymptotic behavior of positive and spatially decaying solutions of the stationary equation \eqref{eq0001} in the energy-critical case $p=\frac{2d}{d-2}$ have not been studied yet.  {\em The first purpose of this paper} is to give a precise description of the latter problems.

\vskip0.2in

We shall now introduce some notations and definitions to state our main results. Let $X \subset L^2(\mathbb{R}^d)$ be the form domain space for the operator $-\Delta + |x|^2$ equipped with the norm
\begin{eqnarray*}
\|u\|_X := \bigg(\int_{\bbr^d} (|\nabla u|^2 + |x|^2 |u|^2) dx\bigg)^{\frac12},
\end{eqnarray*}
We also introduce the energy space $\Sigma := X \cap L^{\frac{2d}{d-2}}(\mathbb{R}^d)$. For fixed $\omega \in (\omega_*,d)$, we define
\begin{eqnarray}\label{eq0002}
\mathcal{I}_{\omega} = \inf_{v \in \Sigma}\bigg\{ I_{\omega}(v) : \quad  \|v\|_{{L^{\frac{2d}{d-2}}(\bbr^d)}}=1\bigg\},
\quad I_{\omega}(v) := \|v\|_X^2 - \omega \|v\|_{L^2(\bbr^d)}^2.
\end{eqnarray}
By the method of Lagrange's multipliers and the scaling transformation, $u =(\mathcal{I}_{\omega})^{\frac{d-2}{4}} v$ is a nontrivial solution of the stationary equation \eqref{eq0001} if $v$ is a minimizer of the variational problem \eqref{eq0002}. Based on the above observations, we can introduce the following definition.

\begin{definition}
We say $u_{\omega}$ is a ground state of the stationary equation \eqref{eq0001}
if $v_{\omega} \in \Sigma$ is a minimizer of the variational problem (\ref{eq0002}) such that $I_{\omega}(v_{\omega}) = \mathcal{I}_{\omega}$ and $u_{\omega} := (\mathcal{I}_{\omega})^{\frac{d-2}{4}} v_{\omega}$.
\end{definition}

Let
\begin{eqnarray}\label{eq0010}
U_{\ve}(x)=\ve^{\frac{d-2}{2}}[d(d-2)]^{\frac{d-2}{4}}\bigg(\frac{1}{\ve^2+|x|^2}\bigg)^{\frac{d-2}{2}}, \quad \ve > 0
\end{eqnarray}
be a family of the algebraic solitons (also called the Aubin-Talanti bubbles \cite{Aubin,Talenti})
which satisfy the elliptic problem
\begin{eqnarray}\label{eq9901}
-\Delta u=u^{\frac{d+2}{d-2}}, \quad u\in D^{1,2}(\bbr^d),
\end{eqnarray}
where $ D^{1,2} (\bbr^d)$ denotes the space of closure of $C_0^\infty (\bbr^d)$ under the norm $ \|\nabla \cdot\|_{L^2 (\bbr^d)} $. 
%It is well-known that $u \in L^{\frac{2d}{d-2}} (\bbr^d)$ if $ u \in D^{1,2} (\bbr^d)$.

\vskip0.2in

For the sake of simplicity, we also denote $U_{\ve = 1}$ by $U$.  It is well known (cf. \cite{Aubin,Talenti}) that $U_\ve$ for every $\ve > 0$ attains the best constant of the Sobolev embedding
\begin{equation}
\label{Sob-emb}
\| u \|_{L^{\frac{2d}{d-2}}(\mathbb{R}^d)} \leq \mathcal{S}^{-\frac{1}{2}}  \| \nabla u \|_{L^2(\mathbb{R}^d)},
\end{equation}
where $\mathcal{S}$ is given by
\begin{eqnarray}\label{eqnewnew1003}
\mathcal{S} = \inf_{v \in D^{2,1}(\bbr^d)} \{\|\nabla v\|_{L^2(\bbr^d)}^2 : \quad \| v \|_{L^{\frac{2d}{d-2}}(\bbr^d)}=1\}.
\end{eqnarray}
By the scaling transformation, if $v$ is a minimizer of the variational problem (\ref{eqnewnew1003}), then $u := (\mathcal{S})^{\frac{d-2}{4}} v$ is a solution
of the elliptic problem (\ref{eq9901}) given by the family of algebraic solutions (\ref{eq0010}) up to spatial translations in $\bbr^d$.

\vskip0.2in

Since the operator $-\Delta+|x|^2-\omega_*$ is positive in $X$, we can define the unique solution of the following inhomogeneous equation
\begin{eqnarray}\label{eq9902}
-\Delta u+(|x|^2-\omega_*) u=U_{\ve}^{\frac{d+2}{d-2}}, \quad u\in X,
\end{eqnarray}
denoted by $PU_\ve$. Moreover, since $U_\ve>0$, by the positivity of the operator $-\Delta+|x|^2-\omega_*$ and the maximum principle, we know that $PU_{\ve}>0$ in $\bbr^d$.

\vskip0.2in

Let $G$ be the Green function of the positive operator $-\Delta+|x|^2-\omega_*$,
\begin{eqnarray}\label{eq9904}
\left\{ \begin{array}{cl} -\Delta G+(|x|^2-\omega_*)G=(d-2)|\mathbb{S}^{d-1}|\delta_0 &\quad\text{in }\bbr^d,\\
G(x)\to 0 &\quad\text{as }|x|\to+\infty,\end{array} \right.
\end{eqnarray}
where $\delta_0$ is the Dirac measure supported at $x=0$ and $|\mathbb{S}^{d-1}|$ is the Lebesgue measure of the unit sphere in $\bbr^d$.
This gives the unique normalization of the Green function such that
$G = |x|^{2-d} - H$, where $H$ is a regular part of $G$ satisfying the following equation:
\begin{eqnarray}\label{eq9905}
\left\{\begin{array}{cl} -\Delta H+(|x|^2-\omega_*)H = (|x|^2-\omega_*)|x|^{2-d} &\quad\text{in }\bbr^d,\\
H(x)\to 0 &\quad\text{as }|x|\to+\infty.\end{array}\right.
\end{eqnarray}
By uniqueness of solutions to the elliptic problems \eqref{eq9904} and \eqref{eq9905}, $G$ and $H$ are radially symmetric. Our main results in the energy-critical case $p=\frac{2d}{d-2}$ can be stated as follows.

\begin{theorem}\label{thm0001}
Let $d\geq3$, $p=\frac{2d}{d-2}$, and $u_\omega$ be the ground state solution of the stationary equation \eqref{eq0001} for $\omega\in(\omega_*, d)$, where $\omega_*$ is given by \eqref{eq9911}.  There exsts $\ve_{\omega} > 0$ such that
\begin{itemize}
\item $u_\omega=PU_{\ve_\omega}+\hat{u}_{\omega}$ for $3\leq d\leq6$
\item $u_\omega=U_{\ve_\omega}+\hat{u}_{\omega}$ $\;\;$ for $d\geq7$,
\end{itemize}
with $\ve_\omega\to0$ and $\| \hat{u}_\omega\|_X\to0$ as $\omega\to\omega_*$.  Moreover, $\mathcal{I}_{\omega} < \mathcal{S}$ for $\omega \in (\omega_*,d)$,
$\mathcal{I}_\omega \mapsto \mathcal{S}$ as $\omega \to \omega_*$,
and there exist positive constants $a_d$ and $b_d$ which only depend on the dimension $d$, such that the concentration rate $\ve_\omega$ and the ground state energy $\mathcal{I}_\omega$ satisfy
\begin{itemize}
\item for $d=3$
 \begin{eqnarray*}
 a_d=\lim_{\omega\to1}\frac{\ve_\omega}{(\omega-1) \| G \|^2_{L^2(\bbr^3)}}, \qquad
 b_d=\lim_{\omega\to1}\frac{\mathcal{S}-\mathcal{I}_{\omega}}{((\omega-1) \| G \|^2_{L^2(\bbr^3)})^{2}},
 \end{eqnarray*}
\item for $d=4$,
 \begin{eqnarray*}
 a_d=\lim_{\omega\to0}\frac{\omega|\log\ve_\omega|}{H(0)\|U\|_{L^{3}(\bbr^4)}^{3}}, \qquad  b_d=\lim_{\omega\to0}\frac{\omega|\log(\mathcal{S}-\mathcal{I}_{\omega})-\log(c_dH(0)\|U\|_{L^{3}(\bbr^4)}^{3})|}{H(0)\|U\|_{L^{3}(\bbr^4)}^{3}},
 \end{eqnarray*}
\item for $d=5$,
 \begin{eqnarray*}
 a_d=\lim_{\omega\to0}\frac{H(0)\|U\|_{L^{\frac{7}{3}}(\bbr^d)}^{\frac{7}{3}}\ve_\omega}{\|U\|_{L^2(\bbr^5)}^2\omega}, \qquad  b_d=\lim_{\omega\to0}\frac{(H(0)\|U\|_{L^{\frac{7}{3}}(\bbr^d)}^{\frac{7}{3}})^2(\mathcal{S}-\mathcal{I}_{\omega})}{\|U\|_{L^2(\bbr^5)}^6\omega^3},
 \end{eqnarray*}
\item for $d=6$,
 \begin{eqnarray*}
 a_d=\lim_{\omega\to0}\frac{|\log\omega|\ve_\omega^2}{\|U\|_{L^2(\bbr^6)}^2\omega}, \qquad  b_d=\lim_{\omega\to0}\frac{|\log\omega|(\mathcal{S}-\mathcal{I}_{\omega})}{\|U\|_{L^2(\bbr^6)}^4\omega^2},
 \end{eqnarray*}
\item for $d\geq7$,
 \begin{eqnarray*}
 \frac{1}{2}=\lim_{\omega\to0}\frac{\||x|U\|_{L^2(\bbr^d)}^2\ve_\omega^2}{\|U\|_{L^2(\bbr^d)}^2\omega}, \qquad  b_d=\lim_{\omega\to0}\frac{\||x|U\|_{L^2(\bbr^d)}^2(\mathcal{S}-\mathcal{I}_{\omega})}{\|U\|_{L^2(\bbr^d)}^4\omega^2}.
 \end{eqnarray*}
\end{itemize}
\end{theorem}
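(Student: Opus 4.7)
The plan is to adapt the classical Brezis--Nirenberg/Brezis--Peletier concentration analysis to the shifted harmonic oscillator $-\Delta+|x|^2-\omega_*$, with the Green function $G$ and its regular part $H$ playing the role that the Robin function plays on bounded domains. I would first establish the strict inequality $\mathcal{I}_\omega<\mathcal{S}$ for every $\omega\in(\omega_*,d)$ by testing $I_\omega$ against $PU_\epsilon$ when $3\le d\le 6$ (the bare bubble $U_\epsilon$ is not in $X$ since $\int|x|^2U_\epsilon^2\,dx$ diverges at infinity in these dimensions) and against $U_\epsilon$ itself when $d\ge 7$; choosing $\epsilon$ optimally, the algebraic gain from the $\omega$ term beats the leading defect produced by the harmonic weight. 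Strict inequality restores compactness of a minimizing sequence for \eqref{eq0002} through the standard Brezis--Lieb/concentration-compactness dichotomy, producing a minimizer $v_\omega$ and hence $u_\omega=(\mathcal{I}_\omega)^{(d-2)/4}v_\omega$; sending $\omega\to\omega_*$ against the same test family yields $\mathcal{I}_\omega\to\mathcal{S}$.

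Since $u_\omega$ is positive and radial (by Gidas--Ni--Nirenberg adapted to the radial harmonic weight), the Struwe/Bianchi--Egnell profile decomposition reduces to a single bubble concentrating at the origin, so that $u_\omega=PU_{\epsilon_\omega}+\hat u_\omega$ (respectively $U_{\epsilon_\omega}+\hat u_\omega$ when $d\ge 7$) with $\epsilon_\omega\to 0$ and $\hat u_\omega$ orthogonal to $PU_{\epsilon_\omega}$ and $\partial_\epsilon PU_{\epsilon_\omega}$ in a natural inner product. Coercivity of the linearized operator $-\Delta-\tfrac{d+2}{d-2}U_{\epsilon_\omega}^{4/(d-2)}$ on this orthogonal complement, a consequence of the non-degeneracy of $U_\epsilon$ as a critical point of the Sobolev quotient, upgrades the soft bound $\|\hat u_\omega\|_X\to 0$ to a quantitative estimate smaller than the leading terms identified below.

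To pin down $\epsilon_\omega$, I substitute the ansatz into \eqref{eq0001} and pair it with $PU_{\epsilon_\omega}$ (respectively $U_{\epsilon_\omega}$). The $-\Delta+|x|^2-\omega_*$ contribution is absorbed by the defining relation \eqref{eq9902}, leaving the balance
\begin{equation*}
(\omega-\omega_*)\int_{\mathbb{R}^d}(PU_{\epsilon_\omega})^2\,dx=\int_{\mathbb{R}^d}\bigl(U_{\epsilon_\omega}^{(d+2)/(d-2)}-(PU_{\epsilon_\omega})^{(d+2)/(d-2)}\bigr)PU_{\epsilon_\omega}\,dx+\ldots,
\end{equation*}
in which the right-hand side is evaluated via the pointwise expansion $U_\epsilon-PU_\epsilon=[d(d-2)]^{(d-2)/4}\epsilon^{(d-2)/2}H(x)+o(\epsilon^{(d-2)/2})$, while the left-hand side is evaluated from the dimension-dependent asymptotics of $\|PU_\epsilon\|_{L^2}^2$: proportional to $\epsilon\|G\|_{L^2(\mathbb{R}^3)}^2$ in $d=3$, logarithmic in $\epsilon$ with prefactor $H(0)\|U\|_{L^3(\mathbb{R}^4)}^3$ in $d=4$, linear in $\epsilon$ with a prefactor built from $H(0)\|U\|_{L^{7/3}(\mathbb{R}^5)}^{7/3}$ in $d=5$, of order $\epsilon^2|\log\epsilon|$ with prefactor $\|U\|_{L^2(\mathbb{R}^6)}^2$ in $d=6$, and of order $\epsilon^2\|U\|_{L^2(\mathbb{R}^d)}^2$ in $d\ge 7$, where $U$ is already square-integrable and the weighted tail $\int|x|^2U^2$ is finite. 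Matching the two sides produces the five dimension-specific formulas for $a_d$, and feeding the resulting $\epsilon_\omega(\omega)$ back into $I_\omega(v_\omega)$ with a second-order expansion gives the corresponding formulas for $b_d$.

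The main obstacle will be the case-by-case bookkeeping of the weighted integrals $\int|x|^2(PU_\epsilon)^2\,dx$ and $\int(U_\epsilon-PU_\epsilon)U_\epsilon^{(d+2)/(d-2)}\,dx$, together with the asymptotic expansion of $H$ near the origin. The logarithmic dimensions $d=4$ and $d=6$ demand identifying a second-order cancellation that isolates the $|\log\epsilon_\omega|$ correction (in $d=4$ this even propagates into the $\log(\mathcal{S}-\mathcal{I}_\omega)$ inside the formula for $b_d$), while the transition at $d=7$ is where the $|x|^2$ weight first becomes integrable against $U^2$, allowing $PU_\epsilon$ to be replaced by $U_\epsilon$ and producing the universal factor $1/2$ in the limit defining $a_d$ from a second-order Pohozaev/Nehari identity applied to $u_\omega=U_{\epsilon_\omega}+\hat u_\omega$.
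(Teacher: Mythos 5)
Your overall strategy coincides with the paper's: project the bubble through the operator $-\Delta+|x|^2-\omega_*$ to get $PU_\ve$ for $3\le d\le 6$, use the bare bubble for $d\ge 7$, decompose $u_\omega$ with orthogonality conditions, control $\hat u_\omega$ by the coercivity of the linearized operator on the orthogonal complement (Rey's estimate), and determine $\ve_\omega$ by balancing the Nehari identity $\|u_\omega\|_X^2-\omega\|u_\omega\|_{L^2}^2=\|u_\omega\|_{L^{2d/(d-2)}}^{2d/(d-2)}$. (The paper actually mixes two methods: variational matching against test functions for $d\ge 6$ and the equation-pairing you propose for $d=3,4,5$; using the identity uniformly is a legitimate variant.)

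However, two concrete points in your sketch are wrong as written and would derail the matching if taken literally. First, your stated asymptotics of $\|PU_\ve\|_{L^2}^2$ for $d=4$ and $d=5$ are incorrect: in $d=4$ one has $\|PU_\ve\|_{L^2}^2\sim 8|\mathbb{S}^3|\,\ve^2|\log\ve|$ with the prefactor coming from the tail $U_\ve^2\sim\ve^2|x|^{-4}$, not from $H(0)\|U\|_{L^3}^3$; in $d=5$ one has $\|PU_\ve\|_{L^2}^2\sim\ve^2\|U\|_{L^2(\bbr^5)}^2$ (quadratic in $\ve$, not linear, and with no $H(0)$). The combination $H(0)\|U\|_{L^{(d+2)/(d-2)}}^{(d+2)/(d-2)}$ enters on the \emph{other} side of the balance, through $\int (U_\ve^{(d+2)/(d-2)}-PU_\ve^{(d+2)/(d-2)})PU_\ve\,dx \sim c_d\,\ve^{d-2}H(0)\|U\|_{L^{(d+2)/(d-2)}}^{(d+2)/(d-2)}$; with your attribution the matching would produce the wrong formulas for $a_4$ and $a_5$. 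Second, in $d=3$ the naive leading term of the bubble-interaction side is $O(\ve)\cdot H(0)$, which is of the \emph{same} order as $(\omega-1)\ve\|G\|_{L^2}^2$ only because one first proves $H(0)=0$ (the paper deduces this by dividing the balance by $\ve_\omega$ and letting $\omega\to 1$, after expanding $H(x)=H(0)+\tfrac12|x|+\mathcal{O}(|x|^2)$, which requires the non-$C^1$ regularity analysis of $H$); the answer then comes from the next-order term $-\tfrac{4\pi}{3}\ve^{3/2}$ in $\int_{B_R}U_\ve^5H\,dx$. Your sketch skips this cancellation entirely, and without it the $d=3$ formula involving $\|G\|_{L^2(\bbr^3)}^2$ cannot be reached.
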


\begin{remark}
To our best knowledge, Theorem~\ref{thm0001} is the first result on the concentration phenomenon and the asymptotic behavior of solutions of the stationary Gross--Pitaevskii equation \eqref{eq0001} in the energy-critical case $p=\frac{2d}{d-2}$.  It is worth pointing out that a formal and brief calculation on the upper bounds of $\mathcal{I}_{\omega}$ is obtained in \cite[Section 5]{S11} to ensure the existence of minimizers of $\mathcal{I}_{\omega}$.  These upper bounds of $\mathcal{I}_{\omega}$ are calculated in a standard way by choosing the Aubin-Talanti bubbles as test functions of $\mathcal{I}_{\omega}$, as that in \cite{BN83}.  However, the main difficulty in proving Theorem~\ref{thm0001} is to obtain a good lower bound of $\mathcal{I}_{\omega}$ which will match the upper bound generated by the Aubin-Talanti bubbles up to the leading order terms.  To achieve this, we need to further employ the ideas in literature \cite{BP89,D02,E04,FKK20,FKK21,H91,HV01,R89,R90}, that is, splitting of $u_\omega$ into two parts in $X$ and estimating of these two parts precisely up to the leading order term.  We remark that, due to the growth of the harmonic potential at infinity and the unboundedness of $\bbr^d$, the regular part of the Green function of the operator $-\Delta+|x|^2-\omega_*$ is no longer bounded for all $d\geq3$.  Thus, we need to modify the arguments of the proofs in a nontrivial way to capture the leading order terms of $\ve_\omega$ and $\mathcal{I}_{\omega}$ for all $d\geq3$, which also makes the concentration phenomenon of positive solutions of the stationary equation \eqref{eq0001} to be more complicated than that of the Dirichlet problem \eqref{eq1002}.
\end{remark}

\begin{remark}
	One can use parameter $\varepsilon$ in the family of algebraic solitons (\ref{eq0010}) to parameterize the family of the ground states $(\omega,u_{\omega})$ of the stationary equation (\ref{eq0001}). It follows from Theorem \ref{thm0001} that the asymptotic behavior of the mapping $\varepsilon \mapsto \omega$ as $\ve \to 0$ depends on the dimension $d \geq 3$ and satisfies
	$$
	\omega - \omega_* \sim
	\left\{ \begin{array}{ll}
\varepsilon & \quad \mbox{\rm for} \; d = 3, \\
|\log \varepsilon|^{-1} & \quad \mbox{\rm for} \; d = 4, \\
\varepsilon & \quad \mbox{\rm for} \; d = 5, \\
\varepsilon^2 |\log \varepsilon| & \quad \mbox{\rm for} \; d = 6, \\
\varepsilon^2 & \quad \mbox{\rm for} \; d \geq 7. \\
	\end{array} \right.
	$$
\end{remark}

\vskip0.2in

In the energy-supercritical case $p>\frac{2d}{d-2}$, we will fix $p=4$ to simplify the computations similarly to what was adopted in \cite{BFPS21,PS22}.  The energy-supercritical case for $p = 4$ corresponds to $d\geq5$ and the stationary equation \eqref{eq0001} is reduced to
\begin{eqnarray}\label{eqnew0001}
-\Delta u+|x|^2u=\omega u+u^3,\quad\text{in }\bbr^d.
\end{eqnarray}
It has been proved in \cite{SKW13} (see also \cite{BFPS21} for a different proof), that there exists a singular radial solution $u_{\infty}$
of the stationary equation (\ref{eqnew0001}) for some $\omega_\infty\in(d-4, d)$ satisfying
\begin{equation}
\label{sing-beh}
u_{\infty}(x) = \frac{\sqrt{d-3}}{|x|} \left[ 1 + \mathcal{O}(|x|^2) \right] \quad \mbox{\rm as} \;\; |x| \to 0.
\end{equation}
Moreover, by \cite[Theorem~1.1]{BFPS21}, for every $b>0$, there exists a positive radial solution $u_b$ of the stationary equation (\ref{eqnew0001}) for some $\omega_b\in(d-4, d)$ satisfying $u_b(0) = b$. By \cite[Theorem~1.2]{SKW13}, it is known that $u_b\to u_\infty$ strongly in $\Sigma$ and $\omega_b\to\omega_\infty$ as $b\to+\infty$.  The precise asymptotic behavior of $\omega_b$ as $b\to+\infty$ is obtained in \cite[Theorem~1.3]{BFPS21} under some nondegeneracy assumptions.  By \cite[Theorem~1.3]{BFPS21}, $\omega_b$ is oscillatory around $\omega_\infty$ as $b\to+\infty$ for $5\leq d\leq12$ and $\omega_b$ converges to $\omega_\infty$
monotonically as $b\to+\infty$ for $d\geq13$. These results suggest that the Morse index of $u_\infty$ in the class of radial functions is infinite for $5\leq d\leq12$ and finite for $d\geq13$, where the Morse index of $u_{\infty}$ is defined as follows.

\begin{definition}
	\label{def-Morse}
	Let $u_{\infty}$ be the singular radial solution of the stationary equation (\ref{eqnew0001}) for some $\omega_{\infty} \in (d-4,d)$ satisfying (\ref{sing-beh}) and consider the linearized operator
	$$
	L_{\infty} := - \Delta + |x|^2 - \omega_{\infty} - 3 u_{\infty}^2
	$$
	in $X_{\rm rad} := \{ f \in X : \text{$f$ is radial}\}$.	The Morse index of $u_{\infty}$ denoted by $\mathfrak{m}(u_{\infty})$ is the number of negative eigenvalues of $L_{\infty}$ in $X_{\rm rad}$.
\end{definition}

It was proven in \cite{PS22} that the Morse index of $u_b$ for large $b$ coincides with the Morse index of $u_{\infty}$ for $d \geq 13$. It was conjectured in \cite{PS22} based on numerical evidences that $\mathfrak{m}(u_{\infty}) = 1$ for $d\geq13$. {\em The second purpose of this paper} is to estimate $\mathfrak{m}(u_{\infty})$ which is done in the following main result.

\begin{theorem}\label{thm0002}
Let $p=4$, $d\geq5$ and $u_\infty$ be the singular radial solution of the stationary equation \eqref{eqnew0001} for some $\omega_\infty\in(d-4, d)$ satisfying (\ref{sing-beh}).  Then
\begin{eqnarray*}
\mathfrak{m}(u_\infty) = \left\{ \begin{array}{ll} \infty, & \quad 5\leq d\leq12,\\
1\text{ or }2, &\quad 13\leq d\leq 15,\\
1, & \quad d\geq16.
\end{array} \right.
\end{eqnarray*}
\end{theorem}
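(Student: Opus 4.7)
The strategy is to reduce the Morse index computation to a one-dimensional spectral problem on the half-line and then separate the three regimes using the sharp Hardy inequality together with Sturm oscillation theory.

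Via the radial substitution $u(r) = r^{-(d-1)/2} w(r)$ the quadratic form $\langle L_\infty u, u\rangle$ on $X_{\rm rad}$ becomes, modulo the factor $|\mathbb{S}^{d-1}|$, $Q(w) = \int_0^\infty [(w')^2 + V(r)\,w^2]\,dr$ with
\begin{equation*}
V(r) = \frac{(d-3)(d-13)}{4r^2} + r^2 - \omega_\infty - 3 W_\infty(r), \qquad W_\infty(r) := u_\infty(r)^2 - \tfrac{d-3}{r^2},
\end{equation*}
where $W_\infty$ is bounded on $(0,\infty)$ by (\ref{sing-beh}) near the origin and by the Gaussian decay of $u_\infty$ at infinity. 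Thus $\mathfrak{m}(u_\infty)$ equals the number of negative eigenvalues of the Schr\"odinger operator $\widetilde L := -\partial_r^2 + V(r)$ on $L^2(0,\infty)$, in the self-adjoint realisation inherited from $X_{\rm rad}$. Setting $\nu_d := (d-3)(d-13)/4$, one checks that $\nu_d \geq -\tfrac14$ if and only if $d \geq 13$ or $d \leq 3$, which matches the monotone/oscillatory dichotomy for $\omega_b$.

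For $5 \leq d \leq 12$, $\nu_d < -\tfrac14$ violates the sharp one-dimensional Hardy inequality, so $-\partial_r^2 + \nu_d/r^2$ admits the oscillatory fundamental solutions $r^{1/2} e^{\pm i \sigma_d \log r}$ with $\sigma_d = \tfrac12 \sqrt{-1-4\nu_d} > 0$. Localising these on disjoint dyadic shells $[2^{-k-1}, 2^{-k}]$ with $k$ large (so that the bounded perturbations $r^2 - \omega_\infty - 3 W_\infty$ are dominated by the singular term $\nu_d/r^2$) produces an orthogonal family $\{w_k\}$ with $Q(w_k) < 0$, hence an infinite-dimensional negative subspace and $\mathfrak{m}(u_\infty) = \infty$. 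For $d \geq 13$ the lower bound $\mathfrak{m}(u_\infty) \geq 1$ comes from testing against $u_\infty$ itself: using (\ref{eqnew0001}) with a cutoff regularisation near the singularity, justified by $\int u_\infty^4\,dx < \infty$ when $d \geq 5$, we obtain $\langle L_\infty u_\infty, u_\infty\rangle = -2 \int u_\infty^4 \, dx < 0$. For the upper bound we compare $\widetilde L$ with the solvable model $\widetilde L_0 := -\partial_r^2 + \nu_d/r^2 + r^2 - \omega_\infty$, whose eigenvalues in the correct realisation are
\begin{equation*}
\mu_n = 4n + 2 + \sqrt{d^2 - 16d + 40} - \omega_\infty, \qquad n = 0, 1, 2, \ldots.
\end{equation*}
A Sturm oscillation argument for the ODE $\widetilde L \phi = 0$ reduces $\mathfrak{m}(u_\infty)$ to the number of sign changes on $(0,\infty)$ of the solution $\phi_*$ regular at the origin in $X_{\rm rad}$; absorbing the bounded perturbation $-3 W_\infty$ by resolvent estimates yields $\mathfrak{m}(u_\infty) \leq 2$ throughout $d \geq 13$ and $\mathfrak{m}(u_\infty) \leq 1$ for $d \geq 16$, the threshold being the one at which $\mu_1 = 6 + \sqrt{d^2 - 16d + 40} - \omega_\infty$ is forced strictly above zero after the $W_\infty$-correction.

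The main obstacle is the sharp ``$1$ vs.\ $2$'' distinction. Since $\omega_\infty$ is known only to lie in $(d-4, d)$, placing it relative to $\mu_1$ after accounting for the bounded but non-sign-definite perturbation $W_\infty$ is delicate and does not follow from Hardy alone; for $d \geq 16$ the extra Hardy margin $\sqrt{d^2 - 16d + 40}$ becomes comfortably large, but a rigorous proof requires either an explicit energy identity bounding $\omega_\infty$ from above or a perturbation argument linking the radial Morse indices of $u_b$ and $u_\infty$. I would attack this step by combining the monotone convergence $u_b \to u_\infty$ in $\Sigma$ with the Morse-index preservation $\mathfrak{m}(u_b) = \mathfrak{m}(u_\infty)$ for large $b$ from \cite{SKW13,BFPS21,PS22}, reducing the question to a precise phase-plane analysis of the Emden--Fowler-reduced autonomous system near the equilibrium corresponding to $u_\infty$, whose characteristic exponents are exactly $\tfrac12(-(d-4) \pm \sqrt{d^2-16d+40})$ and govern the winding of the unstable manifold.
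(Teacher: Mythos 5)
Your reduction to the half-line operator $-\partial_r^2+\nu_d/r^2+r^2-\omega_\infty-3W_\infty$ with $\nu_d=\tfrac{(d-3)(d-13)}{4}$ is sound, and for $5\leq d\leq 12$ it gives a genuinely different route from the paper: the paper proves $\mathfrak{m}(u_\infty)=\infty$ by counting intersections of $u_\infty$ with the regular solutions $u_{b_n}$ (splitting into the cases $\omega_{b_n}>\omega_\infty$ and $\omega_b\leq\omega_\infty$, and using a Sturm comparison for $V_{b_n}=1-Z_{b_n}$ in Emden--Fowler variables), whereas you argue directly from the supercritical inverse-square singularity. Your version is cleaner in spirit, but the implementation needs care: a single dyadic shell does not carry a negative direction (the log-frequency $\sigma_d=\tfrac12\sqrt{-d^2+16d-40}\leq\sqrt{6}$ is too small), and the zero-energy solutions $r^{1/2}\sin(\sigma_d\log r)$ of the pure inverse-square problem restricted between consecutive zeros give quadratic form \emph{exactly} zero plus a correction $\int(r^2-\omega_\infty-3W_\infty)w^2\,dr$ whose integrand tends to $\omega_\infty\tfrac{d-4}{2d-5}>0$ as $r\to0$ by (\ref{eqnew0056}), i.e.\ has the \emph{wrong} sign. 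The fix is standard (fix one test function $w$ with $\int[(w')^2+\nu_d w^2/s^2]\,ds<0$, which exists by sharpness of the Hardy constant $1/4$, and rescale $w_\lambda(r)=\lambda^{1/2}w(\lambda r)$ so that the $\lambda^2$-growth of the singular part beats the bounded remainder, placing the supports on disjoint shells), but as written the shell construction does not close.

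The genuine gap is in the monotone case $d\geq13$. Your comparison operator $\widetilde L_0$ has eigenvalues $\mu_n=\sigma_{n+1}-\omega_\infty$ with $\sigma_{n+1}=4n+2+\sqrt{d^2-16d+40}$, and your proposed threshold is ``$\mu_1>0$'', i.e.\ $\omega_\infty<\sigma_2=6+\sqrt{d^2-16d+40}$. But $\sigma_2<d$ for every $d\geq2$, while the only available information is $\omega_\infty\in(d-4,d)$; so the sign of $\mu_1$ is \emph{not} determined, and no resolvent estimate on the bounded, non-sign-definite perturbation $-3W_\infty$ will decide it. You correctly flag this as the main obstacle, but the proposed remedies (Morse-index transfer from $u_b$, phase-plane winding) are not carried out and the first is circular, since $\mathfrak{m}(u_b)$ for large $b$ is exactly what is unknown. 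The paper's resolution is a nodal-splitting comparison that lets one test against the \emph{third} eigenvalue $\sigma_3=10+\sqrt{d^2-16d+40}$, which \emph{does} satisfy $\sigma_3\geq d>\omega_\infty$ for $d\geq15$ (the paper uses $d\geq16$): assuming $\tau_2\leq\omega_\infty$, the first eigenfunction $\phi_1$ together with the two restrictions of $\phi_2$ to its nodal domains give \emph{three} linearly independent functions on which the form of $-\Delta+|x|^2-\tfrac{3(d-3)}{|x|^2}-\sigma_3$ is negative (using the pointwise bound $u_\infty(r)<\sqrt{d-3}/r$ from the monotonicity of $ru_\infty(r)$), contradicting that this exactly solvable operator has only two eigenvalues below $\sigma_3$; the case $13\leq d\leq15$ is handled analogously with $\sigma_4$ and the three nodal pieces of $\phi_3$, giving six test functions against Morse index three. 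This doubling of the count via nodal domains is the key idea absent from your proposal, and it is precisely what converts the unusable inequality $\omega_\infty\lessgtr\sigma_2$ into the verifiable one $\omega_\infty<\sigma_3$.
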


\begin{remark}
\item[$(i)$] To prove Theorem~\ref{thm0002} for $5\leq d\leq12$, we shall mainly follow the ideas in \cite{GW11}.  The oscillation of $\omega_b$ around $\omega_\infty$ as $b\to+\infty$ is obtained in \cite[Theorem~1.3]{BFPS21} under some nondegeneracy assumptions, which is hard to verify. In order to avoid making these nondegeneracy assumptions, we need to modify the arguments in \cite{GW11}.

\item[$(ii)$] In proving Theorem~\ref{thm0002} for $d\geq13$, we consider the limiting spectral problem
\begin{eqnarray}\label{eqnew2222}
-\Delta u+|x|^2u-\frac{3(d-3)}{|x|^2}u=\sigma u,\quad u\in X_{\rm rad},
\end{eqnarray}
whose eigenvalues $\{ \sigma_n \}_{n \in \mathbb{N}}$ are completely known in the literature from the confluent hypergeometric equation \cite{V16}. We compare $\omega_\infty+3u_\infty^2$ and $\sigma_3+\frac{3(d-3)}{r^2}$ to control $\mathfrak{m}(u_\infty)$ by the Morse index of the radial eigenfunctions of the spectral problem \eqref{eqnew2222}.  As a by-product, we also prove that $u_\infty$ is nondegenerate for $d\geq16$, this avoids the nondegeneracy assumptions of \cite{PS22}. See Remark~\ref{rmk0001} for more details.
\end{remark}

\vskip0.2in

\noindent{\bf\large Notations.} Throughout this paper, $C$ and $C'$ are indiscriminately used to denote various positive constants.
Notation $a\lesssim b$ means that there exists $C > 0$ such that $a\leq Cb$.
Notation $a = \mathcal{O}(b)$ means that there exist $C, C' > 0$ such that $C'b\leq a\leq Cb$. Notation $a = o(b)$ means that $\lim\limits_{b \to 0} a/b = 0$. Notation $a \sim b$ as $b \to 0$ means that $\lim\limits_{b \to 0} a/b = 1$ (the same convention is used if $b \to \infty$).

\section{The energy-critical case}

\subsection{Preliminaries}

It has been proved in \cite[Section~5]{S11}, without the statement of theorems, that $\mathcal{I}_{\omega}$ is attained for $\omega\in(\omega_*, d)$.  On the other hand, by the Pohozaev identity, see, e.g., \cite[Proposition 2.2]{BFPS21},  we know that the stationary equation \eqref{eq0001} has no solutions in $\Sigma$ for $\omega\leq0$ which implies that $\mathcal{I}_{\omega}$ can not be attained for $\omega\leq0$.  Moreover, since $d$ is the first eigenvalue of $-\Delta+|x|^2$ in $X$, by multiplying \eqref{eq0001} with the first eigenfunction of the operator $-\Delta+|x|^2$ on both sides and integrating by parts, see, e.g., \cite[Proposition 2.1]{BFPS21}, we know that the stationary equation \eqref{eq0001} has no positive solutions for $\omega\geq d$. This implies that $\mathcal{I}_{\omega}$ can not be attained for $\omega\geq d$ either since minimizers of the variational problem \eqref{eq0002} are positive and radially symmetric. In addition, by \cite[Theorem~3]{SK12} or \cite[Theorem~7]{SW13},  the stationary equation \eqref{eq0001} also has no positive solutions for $\omega\leq1$ in the case of $d=3$. Thus, we know that $\mathcal{I}_{\omega}$ is attained if and only if $\omega\in(\omega_*, d)$.

\vskip0.2in

Since $\mathcal{I}_{\omega}$ is attained for $\omega\in(\omega_*, d)$, it can be proven by a standard way that $\mathcal{I}_{\omega}$ is strictly decreasing for $\omega\in[\omega_*, d]$ with $\mathcal{I}_{\omega = \omega_*} = \mathcal{S}$ and $\mathcal{I}_{\omega = d} =0$, where $\mathcal{S}$ is the best constant of the Sobolev embedding given by the variational problem  \eqref{eqnewnew1003}.  The monotone property was first pointed out by Brez\'is and Nirenberg in \cite[Remark~1.5]{BN83}.  The detailed proofs were recently given in 
\cite[Lemma~2.1]{CZ12} and \cite[Lemma~3.3]{WW21}.  Hence,
we have
\begin{eqnarray}\label{eq0099}
0 < \mathcal{I}_{\omega} < \mathcal{S} = \mathcal{I}_{\omega_*} \quad\text{for all }\omega\in(\omega_*, d).
\end{eqnarray}

\vskip0.2in

Let $v_\omega$ be the minimizer of the variational problem (\ref{eq0002}) for $\omega\in(\omega_*, d)$. Then, $u_\omega := (\mathcal{I}_{\omega})^{\frac{d-2}{4}} v_\omega$ is the ground state solution of the stationary  equation \eqref{eq0001}. Since we are interested in $\omega\to\omega_*$ with $\omega_*<d$, it is standard to show that $\{u_\omega\}$ is bounded in $X$.  By the compactness of the embedding from $X$ to $L^2(\bbr^d)$ due to the harmonic potential $|x|^2$, we may assume that there exists $u_* \in \Sigma$ such that $u_\omega\rightharpoonup u_*$ weakly in $\Sigma$ and $u_\omega\to u_*$ strongly in $L^2(\bbr^d)$ as $\omega\to\omega_*$.
We claim that $u_* = 0$. Indeed, if $u_*\not=0$, then  $u_*\in \Sigma$ satisfies
\begin{equation}
\label{eq-u*}
-\Delta u_*+|x|^2u_*=\omega_* u_*+|u_*|^{\frac{4}{d-2}}u_*
\end{equation}
in the weak sense, which, together with \eqref{eq0099}, implies
\begin{eqnarray}\label{eq0098}
I_{\omega_*}(u_*) = \|u_*\|_{L^{\frac{2d}{d-2}}(\bbr^d)}^{\frac{4}{d-2}}\leq\|u_\omega\|_{L^{\frac{2d}{d-2}}(\bbr^d)}^{\frac{4}{d-2}}+o(1) = I_{\omega}(u_{\omega}) +o(1)\leq I_{\omega_*}(u_*)+o(1).
\end{eqnarray}
Thus, $u_*$ corresponds to the minimizer $v_*$ with $I_{\omega_*}(v_*) = \mathcal{I}_{\omega_*}$ by $u_* := (\mathcal{I}_{\omega_*})^{\frac{d-2}{4}} v_*$ so that $u_*$ is positive and radially symmetric. This contradicts the previously reviewed results, from which no positive and radially symmetric
solution of the stationary equation (\ref{eq-u*}) exists in $\Sigma$
with $\omega_*$ given by (\ref{eq9911}).
Therefore, we must have $u_*=0$ and $u_\omega\rightharpoonup0$ weakly in $X$ and $u_\omega\to 0$ strongly in $L^2(\bbr^d)$ as $\omega\to\omega_*$.  Moreover, since $v_\omega$ be the minimizer of the variational problem (\ref{eq0002}) and $u_\omega= (\mathcal{I}_{\omega})^{\frac{d-2}{4}} v_\omega$, by \eqref{eq0099} and \eqref{eq0098},
\begin{eqnarray}
\label{eq-new}
\|u_{\omega}\|_{L^{\frac{2d}{d-2}}(\bbr^d)}^{\frac{2d}{d-2}} =(\mathcal{I}_{\omega})^{\frac{d}{2}}\|v_{\omega}\|_{L^{\frac{2d}{d-2}}(\bbr^d)}^{\frac{2d}{d-2}}
=\mathcal{S}^{\frac{d}{2}}+o(1)\quad\text{as}\quad\omega\to\omega_*.
\end{eqnarray}
Since $u_\omega$ is also the ground state solution of the stationary  equation \eqref{eq0001}, by multiplying \eqref{eq0001} with $u_\omega$ on both sides and integrating by parts, we also have
\begin{eqnarray}\label{eqnewnew9875}
\|u_{\omega}\|_X^2=\omega\|u_{\omega}\|_{L^2(\bbr^d)}^2+\|u_{\omega}\|_{L^{\frac{2d}{d-2}}(\bbr^d)}^{\frac{2d}{d-2}}=\mathcal{S}^{\frac{d}{2}}+o(1)\quad\text{as}\quad\omega\to\omega_*
\end{eqnarray}
since $u_\omega\to 0$ strongly in $L^2(\bbr^d)$ as $\omega\to\omega_*$.  On the other hand, it follows from ~\eqref{Sob-emb}, \eqref{eq-new}, and \eqref{eqnewnew9875} that
\begin{eqnarray*}
\|\nabla u_{\omega}\|_{L^2(\bbr^d)}^2 = \mathcal{S}\|u_{\omega}\|_{L^{\frac{2d}{d-2}}(\bbr^d)}^{2}=\mathcal{S}^{1+\frac{d-2}{2}}+o(1)
=\mathcal{S}^{\frac{d}{2}}+o(1)\quad\text{as}\quad\omega\to\omega_*,
\end{eqnarray*}
which implies that
\begin{eqnarray}\label{eq0096}
\| x u_{\omega}\|^2_{L^2(\bbr^d)} = o(1) \quad\text{as}\quad\omega\to\omega_*.
\end{eqnarray}

\subsection{Expansions of $u_\omega$}

Since $u_\omega$ is a ground state of the stationary equation~\eqref{eq0001} related to a minimizer of the variational problem~\eqref{eq0002}, the moving-plane method (cf. \cite{GNN81}) or the Schwarz symmetrization (cf. \cite{W10}) imply that $u_{\omega}$ is radial, positive and strictly decreasing in $r=|x|$. The following lemma clarifies the construction of $PU_{\ve}$ from solutions of the inhomogeneous equation (\ref{eq9902}).

\begin{lemma}\label{lem0001}
Let $3\leq d\leq6$, then
\begin{eqnarray}\label{eq9918}
PU_\ve =  U_\ve - \ve^{\frac{d-2}{2}}[d(d-2)]^{\frac{d-2}{4}}H - \eta_\ve, \quad |x|\lesssim1
\end{eqnarray}
and
\begin{eqnarray}\label{eq9910}
PU_\ve(x) \lesssim \ve^{\frac{d+2}{2}}|x|^{-(4+d)}
\quad \text{ for } |x| \gtrsim1,
\end{eqnarray}
where $H$ is defined by (\ref{eq9905}) and the correction term
$\eta_{\ve}$ satisfies
\begin{eqnarray}\label{eq9999}
\|\eta_\ve\|_{L^{\infty}(\bbr^d)}\lesssim\ve^{\frac{d+2}{2}} \quad
\mbox{\rm for} \; 3 \leq d \leq 5,
\end{eqnarray}
and
\begin{eqnarray}\label{eq9917}
\|\eta_\ve\|_{W^{2,\frac{3}{2}}(\bbr^6)}\lesssim\ve^{4}, \quad
\mbox{\rm for} \; d = 6.
\end{eqnarray}
\end{lemma}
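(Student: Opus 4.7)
The plan is to extract a PDE for $\eta_\ve$, estimate its right-hand side pointwise, and then apply elliptic regularity. Setting $c_d := [d(d-2)]^{(d-2)/4}$ and taking (\ref{eq9918}) as the definition $\eta_\ve := U_\ve - \ve^{(d-2)/2} c_d H - PU_\ve$, a direct computation using $-\Delta U_\ve = U_\ve^{(d+2)/(d-2)}$ together with (\ref{eq9902}) and (\ref{eq9905}) yields
\begin{equation*}
(-\Delta + |x|^2 - \omega_*) \eta_\ve = (|x|^2 - \omega_*) f_\ve, \qquad f_\ve(x) := U_\ve(x) - \ve^{(d-2)/2} c_d |x|^{2-d}.
\end{equation*}
Writing $U_\ve(x) = \ve^{(d-2)/2} c_d |x|^{2-d}(1+\ve^2/|x|^2)^{-(d-2)/2}$ and Taylor-expanding extracts the pointwise bounds $|f_\ve(x)| \lesssim \ve^{(d+2)/2}|x|^{-d}$ for $|x| \ge \ve$ and $|f_\ve(x)| \lesssim \ve^{(d-2)/2}|x|^{2-d}$ for $|x| \le \ve$, while $f_\ve \le 0$ throughout.

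To establish (\ref{eq9999}) for $3 \le d \le 5$, I represent $\eta_\ve$ via the Green function $\mathcal{G}$ of $-\Delta + |x|^2 - \omega_*$ on $\bbr^d$, which satisfies $\mathcal{G}(x,y) \lesssim |x-y|^{2-d}$ near the diagonal and decays rapidly at infinity thanks to the harmonic confinement. Partitioning $\bbr^d$ into $|y| \le \ve$, $\ve \le |y| \le 1$, and $|y| \ge 1$ and applying the pointwise estimates on $f_\ve$ region by region, one shows each contribution to $\int_{\bbr^d} \mathcal{G}(x,y)|(|y|^2-\omega_*) f_\ve(y)|\,dy$ is $O(\ve^{(d+2)/2})$ uniformly in $x$. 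For $d=6$, a parallel computation (with a suitable localization handling the borderline tail) gives $\|(|x|^2-\omega_*) f_\ve\|_{L^{3/2}} \lesssim \ve^4$ in the relevant regime, and standard $W^{2,3/2}$-regularity for the resolvent $(-\Delta + |x|^2 - \omega_*)^{-1}$ then yields (\ref{eq9917}).

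For the decay estimate (\ref{eq9910}), I use a supersolution argument. In the exterior $\{|x| \ge R\}$ with $R$ sufficiently large, the barrier $\phi(x) := A \ve^{(d+2)/2} |x|^{-(d+4)}$ satisfies
\begin{equation*}
(-\Delta + |x|^2 - \omega_*)\phi = A \ve^{(d+2)/2} |x|^{-(d+4)} \left[ |x|^2 - \omega_* - \frac{6(d+4)}{|x|^2} \right] \ge U_\ve^{(d+2)/(d-2)}(x)
\end{equation*}
for $A$ large enough, since the bracket is $\sim |x|^2$ for $|x|$ large and $U_\ve^{(d+2)/(d-2)}(x) \lesssim \ve^{(d+2)/2}|x|^{-(d+2)}$ for $|x| \ge \ve$. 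Combining this with a priori $L^\infty$ bounds on $\partial B_R$ from interior elliptic regularity (enlarging $A$ if necessary), the maximum principle for the positive operator $-\Delta + |x|^2 - \omega_*$ gives $PU_\ve \le \phi$ in $\{|x| \ge R\}$.

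The main technical challenge will be the $L^\infty$ estimate for $d = 4, 5$, where the strong near-origin singularity of $f_\ve$ (of order $\ve^{(d-2)/2}|y|^{2-d}$) is only marginally integrable and must be balanced delicately against the $|x-y|^{2-d}$ singularity of the Green function kernel, particularly when $|x|$ is itself small. A related delicate point arises for $d = 6$, where the borderline behavior of the right-hand side forces the switch from an $L^\infty$ bound to the weaker $W^{2,3/2}$ estimate, and correctly controlling the contribution from the intermediate annulus $\ve \le |x| \le 1$ is the crucial point.
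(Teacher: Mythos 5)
Your strategy coincides with the paper's up to the last step: the equation $(-\Delta+|x|^2-\omega_*)\eta_\ve=(|x|^2-\omega_*)f_\ve$ with $f_\ve=U_\ve-\ve^{\frac{d-2}{2}}[d(d-2)]^{\frac{d-2}{4}}|x|^{2-d}$ is exactly the paper's equation for $\eta_\ve$ with source $h_\ve$, your two-regime pointwise bounds on $f_\ve$ are the paper's \eqref{eq9914}, and the barrier $\ve^{\frac{d+2}{2}}|x|^{-(4+d)}$ for \eqref{eq9910} is the same supersolution the paper uses. The only real divergence is that you estimate $\eta_\ve$ by integrating the source against the Green kernel, whereas the paper measures the source in $L^s_{loc}$ (their \eqref{eqnew9910}) and invokes $W^{2,s}$ elliptic regularity plus Sobolev embedding.

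The difficulty you defer to the end is, however, exactly where the powers of $\ve$ must be produced, and the region-by-region computation you describe does not yield them. Take $d=5$ and $x=0$: with your bound $|f_\ve(y)|\lesssim\ve^{\frac{3}{2}}|y|^{-3}$ on $\{|y|\le\ve\}$, the weight $|y|^2$, and the kernel $|y|^{-3}$, the inner-ball contribution is $\ve^{\frac{3}{2}}\int_0^\ve r^{-3}\cdot r^2\cdot r^{-3}\cdot r^4\,dr=\ve^{\frac{3}{2}}\cdot\ve=\ve^{\frac{5}{2}}$; since $f_\ve\le0$ there is no cancellation available, so this is genuinely the size of that contribution, and it falls short of the claimed $\ve^{\frac{7}{2}}$ by a full power of $\ve$. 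The same computation gives $\ve^{\frac{3}{2}}$ instead of $\ve^{\frac{5}{2}}$ for $d=3$, and an extra $|\log\ve|$ for $d=4$ from the annulus $\ve\le|y|\le1$. So the assertion that ``each contribution is $O(\ve^{(d+2)/2})$ uniformly in $x$'' cannot be reached by the argument you outline; near the origin you need either a different mechanism or a weaker conclusion. Separately, the barrier argument for \eqref{eq9910} requires $PU_\ve|_{\partial B_R}\lesssim\ve^{\frac{d+2}{2}}$ before the maximum principle can be invoked: an $\ve$-uniform interior $L^\infty$ bound only gives $O(1)$ boundary data, and enlarging the $\ve$-independent constant $A$ cannot repair a mismatch in the power of $\ve$, so an $\ve$-dependent bound for $PU_\ve$ on $|x|=R$ must be established first.
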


\begin{proof}
Since it follows from (\ref{eq0010})  that
\begin{eqnarray}\label{eq9909}
U_\ve(x) \sim \ve^{\frac{d-2}{2}} |x|^{2-d} \quad \text{ for } |x| \gtrsim1,
\end{eqnarray}
the classical $L^p$-theory of elliptic equations and the Sobolev embedding theorem imply that the unique solution of the inhomogeneous equation (\ref{eq9902}) exists and satisfies $PU_\ve\in L^{\infty}_{loc}(\bbr^d\backslash\{0\})$. In particular, $PU_\ve \lesssim 1$ for $|x| \gtrsim 1$ and $\ve\lesssim1$.  Since
\begin{eqnarray*}
-\Delta |x|^{-(4+d)} + (|x|^2-\omega_*) |x|^{-(4+d)} \sim |x|^{-(2+d)} \quad\text{for }|x|\gtrsim1,
\end{eqnarray*}
it follows from \eqref{eq9909} that $\ve^{\frac{d+2}{2}}|x|^{-(4+d)}$ is a supersolution of equation \eqref{eq9902} for $|x|\gtrsim1$.  Now, by the fact that $PU_\ve \lesssim1$ for $|x| \gtrsim 1$ and $\ve\lesssim1$, the fact that $PU_\ve \to 0$ and $\ve^{\frac{d+2}{2}}|x|^{-(4+d)}\to0$ as $|x|\to+\infty$ and the maximum principle, we obtain (\ref{eq9910}).

\vskip0.12in

To obtain (\ref{eq9918}), we write
\begin{eqnarray}\label{eqnew8840}
\varphi_\ve := U_\ve-PU_\ve,
\end{eqnarray}
then by \eqref{eq9901} and \eqref{eq9902}, $\varphi_\ve$ is the unique solution of the following equation:
\begin{eqnarray}\label{eq9903}
-\Delta u+(|x|^2-\omega_*)u=(|x|^2-\omega_*)U_\ve, \quad u\in X.
\end{eqnarray}
By \eqref{eq9911} and the maximum principle, $\varphi_\ve>0$ in $\bbr^d$ for $d\geq4$.  For $d=3$, since $PU_\ve>0$ in $\bbr^3$,
there exists a unique $r_0>0$ such that $\varphi_\ve$ is strictly increasing
with respect to $r = |x|$ in $[0, r_0)$ and is strictly decreasing in $[r_0, +\infty)$.    Moreover, it follows from \eqref{eq9904} by using the maximum principle that
\begin{eqnarray}\label{eq9906}
|G(x)| \lesssim e^{-\sigma |x|^2} \quad \text{ for some }\sigma>0,
\end{eqnarray}
so that $H(x) = |x|^{2-d} + \mathcal{O}(e^{-\sigma|x|^2})$ as $|x| \to \infty$.
Thus, by \eqref{eq9911} and the classical $L^p$-theory of elliptic equations, we know that $H\in W_{loc}^{2,s}(\bbr^d)$ for $1<s<3$ in the case of $d=3$, $1<s<+\infty$ in the case of $d=4$ and $1<s<\frac{d}{d-4}$ in the case of $d\geq5$.  It follows from the Sobolev embedding theorem that $H\in L^\infty(\bbr^d)\cap C_{loc}^{\alpha}(\bbr^d)$ for $3\leq d\leq5$ with $\forall\alpha\in(0,1)$ and $H\in L_{loc}^{\frac{3s}{3-s}}(\bbr^6)$ for $1<s<3$.
Next we define
\begin{eqnarray}\label{eq9912}
\eta_\ve := \varphi_\ve - \ve^{\frac{d-2}{2}}[d(d-2)]^{\frac{d-2}{4}}H.
\end{eqnarray}
It follows from \eqref{eq9905} and \eqref{eq9903} that $\eta_\ve$
is the unique solution of the following equation:
\begin{eqnarray*}
\left\{
\begin{array}{ll} -\Delta u+(|x|^2-\omega_*)u=\ve^{\frac{d-2}{2}}[d(d-2)]^{\frac{d-2}{4}}(|x|^2-\omega_*)g_\ve & \quad\text{in }\bbr^d,\\
u(x)\to0 &\quad\text{as }|x|\to+\infty,\end{array}\right.
\end{eqnarray*}
where $g_\ve=(\ve^2+|x|^2)^{\frac{2-d}{2}}-|x|^{2-d}$ satisfies
\begin{eqnarray}\label{eq9914}
g_\ve(x) \sim\left\{
\begin{array}{ll} -|x|^{2-d}, &\quad |x|\leq\frac{\ve}{\sqrt{2}},\\
-\ve^2 |x|^{-d},&\quad |x|\geq\frac{\ve}{\sqrt{2}}.\end{array} \right.
\end{eqnarray}
As in the previous estimates, by the classical $L^p$-theory of elliptic equations, the Sobolev embedding theorem and the maximum principle,
we obtain
\begin{eqnarray}\label{eq9915}
|\eta_\ve(x)|\lesssim\ve^{\frac{d+2}{2}} |x|^{-(2+d)}\quad\text{for } |x|\gtrsim1.
\end{eqnarray}
Let $h_\ve=\ve^{\frac{d-2}{2}}[d(d-2)]^{\frac{d-2}{4}}(|x|^2-\omega_*)g_\ve$. It follows from \eqref{eq9914} that
\begin{eqnarray}\label{eqnew9910}
\|h_\ve\|_{L^s_{loc}(\bbr^d)} \lesssim \left\{ \begin{array}{ll} \ve^{\frac{3}{s}-\frac{1}{2}}, & \quad d=3,\\
\ve^{2+\frac{d}{s}-\frac{d-2}{2}}, &\quad 4\leq d\leq6.
\end{array}\right.
\end{eqnarray}
Thus, by \eqref{eq9915} and the classical $L^p$-theory of elliptic equations, we know that $\eta_\ve\in W^{2,s}(\bbr^d)$ for $1<s<3$ in the case of $d=3$, $1< s <+\infty$ in the case of $d=4$ and $1< s <\frac{d}{d-4}$ in the case of $d\geq5$.
The Sobolev embedding theorem implies that $\eta_\ve\in L^\infty(\bbr^d)\cap C_{loc}^{\alpha}(\bbr^d)$ for $3\leq d\leq5$ with $\forall\alpha\in(0,1)$ and $\eta_\ve\in L^{\frac{3s}{3-s}}(\bbr^6)$ for $1<s<3$.  Representation \eqref{eq9918} follows from \eqref{eqnew8840} and \eqref{eq9912}.
Estimates \eqref{eq9999} and \eqref{eq9917} follow from \eqref{eq9915}, \eqref{eqnew9910}, the classical $L^p$-theory and the Sobolev embedding theorem by choosing $s=2$ for $d=3$ and $s=\frac{d}{d-2}$ for $d=4,5$.
\end{proof}

\vskip0.2in

By \eqref{eq0096} and Lions' theorem (cf. \cite[Theorem~1.41]{W96}), there exists $\{\ve_\omega\}\subset\bbr_+$ such that $u_\omega\to U_{\ve_\omega}$ strongly in $D^{1,2}(\bbr^d)$ as $\omega\to\omega_*$.  Since $u_\omega\to0$ strongly in $L^2_{loc}(\bbr^d)$ as $\omega\to\omega_*$, it is easy to see that $\ve_\omega\to0$ as $\omega\to\omega_*$. The following lemma specifies
a precise decomposition of $u_{\omega}$ near $U_{\ve_{\omega}}$.

\begin{lemma}\label{lem0002}
As $\omega\to\omega_*$, there exists $\ve_{\omega} > 0$ such that
\begin{eqnarray}\label{eq9920}
u_\omega = \left\{ \begin{array}{ll} PU_{\ve_\omega} + \hat{u}_{\omega} &\quad\text{for }3\leq d\leq6, \\
U_{\ve_\omega} + \hat{u}_{\omega} &\quad\text{for } d \geq 7, \end{array} \right.
\end{eqnarray}
where $\ve_{\omega} \to 0$ and $\hat{u}_{\omega} \to 0$ in $X$
as $\omega \to\omega_*$ and where $\hat{u}_{\omega} \in \mathcal{M}_{\omega}^{\perp}$ defined by
\begin{eqnarray*}
\mathcal{M}_{\omega} = \left\{ \begin{array}{ll} \{PU_{\ve_\omega}, \partial_{\ve_\omega}PU_{\ve_\omega}, \partial_{x_1}PU_{\ve_\omega},\cdots, \partial_{x_d}PU_{\ve_\omega}\} &\quad\text{for }3\leq d\leq6, \\
\{U_{\ve_\omega}, \partial_{\ve_\omega}U_{\ve_\omega}, \partial_{x_1}U_{\ve_\omega},\cdots, \partial_{x_d}U_{\ve_\omega}\}&\quad\text{for } d \geq 7, \end{array} \right.
\end{eqnarray*}
and the orthogonality holds simultaneously in $X$ and $L^2(\bbr^d)$.
\end{lemma}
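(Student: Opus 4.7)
The plan is a Lyapunov--Schmidt parameter selection along the two-parameter family of scaled bubbles. Radial symmetry of $u_\omega$ and of $PU_\ve$ (resp.~$U_\ve$) makes orthogonality to the translation modes $\partial_{x_j}PU_{\ve_\omega}$ automatic, so the real content of the lemma is orthogonality to $\partial_\ve PU_{\ve_\omega}$ (and to $PU_{\ve_\omega}$ itself, which I would absorb by allowing a scalar coefficient $\alpha_\omega \to 1$ in front of the bubble and folding $(1-\alpha_\omega)PU_{\ve_\omega}$ into the remainder at the end).

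First, I would extract a provisional bubble parameter $\bar\ve_\omega$. As already noted just before the statement, Lions' theorem produces $\bar\ve_\omega \to 0$ with $u_\omega - U_{\bar\ve_\omega} \to 0$ in $D^{1,2}(\bbr^d)$. To upgrade this to convergence in $X$ I would use \eqref{eq0096} together with the decay of $PU_\ve$ from Lemma~\ref{lem0001}: for $3\leq d\leq 6$ the profile $U_\ve$ itself is not in $X$ since $\||x|U_\ve\|_{L^2(\bbr^d)}$ diverges at infinity, so I replace $U_{\bar\ve_\omega}$ by $PU_{\bar\ve_\omega}$, whose decay \eqref{eq9910} makes the weighted-$L^2$ norm finite and small; for $d\geq 7$ the bubble $U_{\bar\ve_\omega}$ is already in $X$ and the weighted-$L^2$ contribution is controlled directly via the pointwise decay of $U_\ve$.

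Next, I would pin down $\ve_\omega$ by an implicit function argument applied to the scalar map
\begin{equation*}
\Phi(\omega,\ve) := \int_{\bbr^d} \bigl(\nabla(u_\omega - PU_\ve)\cdot\nabla \partial_\ve PU_\ve + |x|^2 (u_\omega - PU_\ve)\,\partial_\ve PU_\ve\bigr)\,dx
\end{equation*}
(with $U_\ve$ in place of $PU_\ve$ for $d\geq 7$). Near $\bar\ve_\omega$ the partial derivative $\partial_\ve \Phi$ is asymptotic to $-\|\partial_\ve PU_{\bar\ve_\omega}\|_X^2 \neq 0$, so the IFT yields a unique $\ve_\omega$ close to $\bar\ve_\omega$ with $\Phi(\omega,\ve_\omega)=0$. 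The $X$-smallness of $\hat u_\omega$ then follows by expanding $\|u_\omega - PU_{\ve_\omega}\|_X^2$ and using \eqref{eqnewnew9875}, $\|PU_{\ve_\omega}\|_X^2 \to \mathcal{S}^{d/2}$, and the equation \eqref{eq9902} to rewrite the cross term as the $L^{2d/(d-2)}$ pairing $\int_{\bbr^d} u_\omega\, U_{\ve_\omega}^{(d+2)/(d-2)}\,dx$, which matches the remaining quantities via \eqref{eq-new}.

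The hard part will be imposing orthogonality \emph{simultaneously} in $X$ and $L^2(\bbr^d)$: this is a priori four scalar conditions against only two free parameters in the bubble family. The reconciliation comes from \eqref{eq9902}, which gives the identity
\begin{equation*}
\int_{\bbr^d}(\nabla f\cdot\nabla PU_\ve + |x|^2 f\, PU_\ve)\,dx = \omega_* \int_{\bbr^d} f\, PU_\ve\,dx + \int_{\bbr^d} f\, U_\ve^{(d+2)/(d-2)}\,dx
\end{equation*}
for every $f\in X$, together with an analogous identity for $\partial_\ve PU_\ve$ obtained by differentiation in $\ve$. Applied to $f = \hat u_\omega$, these collapse the $X$- and $L^2$-orthogonality conditions to the same effective pair, modulo errors governed by the pointwise/Sobolev bounds of Lemma~\ref{lem0001}. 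In low dimensions, where the correction $\eta_\ve$ is only bounded in Lebesgue/Sobolev norms rather than pointwise (cf.~\eqref{eq9999}--\eqref{eq9917}), those errors must be handled via Hölder-type inequalities, which is the main technical work of the proof.
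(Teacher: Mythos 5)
Your overall route coincides with the paper's: a provisional scale from Lions' theorem, an upgrade from $D^{1,2}$ to $X$ using \eqref{eq0096} and the decay \eqref{eq9910} (with the correct observation that $U_\ve\notin X$ for $3\le d\le 6$, forcing the switch to $PU_\ve$), selection of $\ve_\omega$ from the single scalar condition $\langle u_\omega-PU_\ve,\partial_\ve PU_\ve\rangle_X=0$ (the paper gets this as the first-order condition of minimizing $\ve\mapsto\|u_\omega-PU_\ve\|_X^2$, which is equivalent to your IFT formulation), and radial symmetry to dispose of the translation modes. Up to that point the proposal is sound.

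The step you yourself identify as the hard part, however, contains a genuine gap. Testing \eqref{eq9902} against $f=\hat u_\omega$ gives
\begin{equation*}
\langle \hat u_\omega, PU_\ve\rangle_X \;=\; \omega_*\,\langle \hat u_\omega, PU_\ve\rangle_{L^2(\bbr^d)} \;+\; \int_{\bbr^d}\hat u_\omega\, U_\ve^{\frac{d+2}{d-2}}\,dx ,
\end{equation*}
and the last term is \emph{not} an error governed by Lemma~\ref{lem0001}: it is precisely the leading-order pairing that drives the later energy expansions, so it does not let the $X$- and $L^2$-conditions ``collapse to the same effective pair.'' Worse, for $d\ge 4$ one has $\omega_*=0$, so the identity contains no $L^2$ pairing at all and cannot transfer orthogonality from one inner product to the other. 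Since after radial reduction you have only the single parameter $\ve$ (your amplitude $\alpha_\omega$ being illusory: once you fold $(1-\alpha_\omega)PU_{\ve_\omega}$ back into the remainder, that remainder has a nonzero component along $PU_{\ve_\omega}$ and the claimed orthogonality to $PU_{\ve_\omega}$ is lost), you cannot impose four exact scalar conditions, and your proposed mechanism does not produce them. For fairness, the paper's own justification here is also terse — it derives only the $\partial_\ve$-orthogonality in $X$ from the minimization and asserts the $L^2$-orthogonality from the spectral basis of $-\Delta+|x|^2$ — and what is actually used downstream (e.g.\ in \eqref{eq8821}) is only orthogonality up to $o(1)$; but your argument, as written, neither establishes the exact statement nor isolates the approximate version that suffices.
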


\begin{proof}
It follows from the explicit formula \eqref{eq0010} for $d\geq7$ that
\begin{eqnarray}\label{eq9928}
\int_{\bbr^d}|x|^2U_{\ve}^2dx=\ve^4\int_{\bbr^d}|x|^2U^2dx,\quad\int_{\bbr^d}U_\ve^2dx=\ve^2\int_{\bbr^d}U^2dx.
\end{eqnarray}
Moreover, for all $d\geq3$,
\begin{eqnarray}
\int_{\bbr^d}U_\ve^qdx=\ve^{d-\frac{(d-2)q}{2}}\int_{\bbr^d}U^qdx\quad\text{for }q>\frac{d}{d-2}\label{eq9930}
\end{eqnarray}
and
\begin{eqnarray}
\int_{B_1}U_\ve^{\frac{d}{d-2}}dx\sim\ve^{\frac{d}{2}}|\log\ve|.\label{eq9957}
\end{eqnarray}
Thus, by the fact that $u_\omega\to U_{\ve_\omega}$ strongly in $D^{1,2}(\bbr^d)$ as $\omega\to\omega_*$ and \eqref{eq0096}, we have
\begin{eqnarray}\label{eq9929}
\|u_\omega-U_{\ve_\omega}\|_X^2\to0\quad\text{as }\omega\to\omega_*.
\end{eqnarray}
On the other hand, since $H,\eta_{\ve_\omega}\in L^\infty(\bbr^d)\cap C_{loc}^{\alpha}(\bbr^d)$ for $3\leq d\leq5$ with $\forall\alpha\in(0,1)$ and $H,\eta_{\ve_\omega}\in L^{\frac{3s}{3-s}}(\bbr^6)$ for $1<s<3$ by Lemma~\ref{lem0001}, it follows from \eqref{eq9918}, \eqref{eq9910}, \eqref{eq9999}, and \eqref{eq9917} that $PU_{\ve_\omega}\to U_{\ve_\omega}$ strongly in $D^{1,2}(\bbr^d)$ as $\omega\to\omega_*$.
Thus, it is also easy to see that
\begin{eqnarray}\label{eq9919}
\|u_\omega-PU_{\ve_\omega}\|_X^2\to0\quad\text{as }\omega\to\omega_*.
\end{eqnarray}
Now, we define
\begin{eqnarray*}
e(\omega) := \left\{ \begin{array}{ll} \inf\limits_{\ve\in\bbr_+}\|u_\omega-PU_{\ve}\|_X^2 &\quad\text{for }3\leq d\leq6, \\
\inf\limits_{\ve\in\bbr_+}\|u_\omega-U_{\ve}\|_X^2 &\quad\text{for } d\geq7.
\end{array} \right.
\end{eqnarray*}
By \eqref{eq9929} and \eqref{eq9919}, it is standard (cf. \cite{BC88,FKK21,R90}) to show that $e(\omega)=o_\omega(1)$ is attained by some $\ve_{\omega}$ satisfying $\ve_{\omega} \to0$ as $\omega\to\omega_*$, which implies that \eqref{eq9920}  hold with $\hat{u}_{\omega} \to0$ in $X$ as $\omega\to\omega_*$.  The orthogonality conditions in $X$ for
$\hat{u}_{\omega} \in \mathcal{M}_{\omega}^{\perp}$ are obtained from
\begin{eqnarray*}
\frac{d}{d\ve} \|u_\omega-PU_{\ve}\|_X^2 |_{\ve=\ve_\omega}=0\quad\text{for }3\leq d\leq6, \qquad
\frac{d}{d\ve} \|u_\omega-U_{\ve}\|_X^2 |_{\ve=\ve_\omega}=0 \quad\text{for } d\geq7.
\end{eqnarray*}
The orthogonality conditions in $L^2(\bbr^d)$ follows from the fact that the eigenfunctions of $-\Delta+|x|^2$ is a orthogonal basis of $L^2(\bbr^d)$.
\end{proof}

\subsection{Estimates on $\hat{u}_{\omega}$}

By \cite[Appendix~D]{R90},
\begin{eqnarray}\label{eq9821}
\int_{\bbr^d}
\left( |\nabla v|^2 -(2^*-1) U_{\ve_\omega}^{2^*-2} |v|^2 \right) dx \geq \frac{4}{d+4}\int_{\bbr^d}|v|^2dx
\end{eqnarray}
for all $v\in D^{1,2}(\bbr^d)$ satisfying
\begin{eqnarray*}
\int_{\bbr^d}\nabla v\nabla U_{\ve_\omega}dx=\int_{\bbr^d}\nabla v\nabla \partial_{\ve_\omega}U_{\ve_\omega}dx=\int_{\bbr^d}\nabla v\nabla \partial_{x_l}U_{\ve_\omega}dx=0
\end{eqnarray*}
where $l=1,2,\cdots,d$.  By Lemma~\ref{lem0002}, we have
\begin{eqnarray*}
\int_{\bbr^d}\nabla \hat{u}_{\omega}\nabla U_{\ve_\omega}dx=\int_{\bbr^d}\nabla \hat{u}_{\omega}\nabla \partial_{\ve_\omega}U_{\ve_\omega}dx=\int_{\bbr^d}\nabla \hat{u}_{\omega}\nabla \partial_{x_l}U_{\ve_\omega}dx=o(1)
\end{eqnarray*}
for all $l=1,2,\cdots,d$ as $\omega\to\omega_*$.  Thus,
\begin{eqnarray}\label{eq8821}
\int_{\bbr^d} \left( |\nabla \hat{u}_{\omega}|^2 -(2^*-1) U_{\ve_\omega}^{2^*-2}|\hat{u}_{\omega}|^2 \right) dx \geq
\left( \frac{4}{d+4}+o(1) \right) \int_{\bbr^d} |\hat{u}_{\omega}|^2 dx
\end{eqnarray}
for $d\geq7$.  On the other hand, by \eqref{eq9911}, \eqref{eq9918}--\eqref{eq9917}, \eqref{eq9821} and $\hat{u}_{\omega}\in \mathcal{M}_{\omega}^{\perp}$, it is also standard (cf. \cite[Appendix~D]{R90}) to show that
\begin{eqnarray}\label{eq9921}
\int_{\bbr^d}
\left( |\nabla \hat{u}_{\omega}|^2+(|x|^2-\omega_*)|\hat{u}_{\omega}|^2 -(2^*-1)PU_{\ve_\omega}^{2^*-2} |\hat{u}_{\omega}|^2 \right) dx \gtrsim \int_{\bbr^d}|\hat{u}_{\omega}|^2dx
\end{eqnarray}
for $3\leq d\leq6$. For $d=3$, we need to use the fact that $\omega_*=1$ and $\lambda = 3$ is the first eigenvalue of the operator $-\Delta+|x|^2$ in $L^2(\bbr^3)$.

The following lemma gives the asymptotic estimate
on the $X$ norm of $\hat{u}_{\omega}$. The proofs are simpler for $d \geq 7$ but get more technically involved for $3 \leq d \leq 6$.

\begin{lemma}\label{lem0003}
Let $d\geq3$, Then as $\omega\to\omega_*$,
\begin{eqnarray}\label{eq9935}
\|\hat{u}_{\omega} \|_X \lesssim \left\{ \begin{array}{ll} (\omega-1)\ve^{\frac{1}{2}}+\ve &\quad\text{for }d=3,\\
\omega\ve_\omega^{\frac{d}{2}-\sigma}+\ve_\omega^{\frac{d-2}{2}+\sigma} &\quad\text{for }4\leq d\leq6,\\
\omega\ve_\omega^{2} &\quad\text{for }d\geq7,\end{array}\right.
\end{eqnarray}
where $\sigma>0$ is a small constant.
\end{lemma}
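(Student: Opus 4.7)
The plan is to derive the equation for $\hat{u}_\omega$, test it against $\hat{u}_\omega$ itself (and also against $W$ for the sharp bound), invoke the coercivity estimates (\ref{eq8821}) or (\ref{eq9921}), and estimate the source by exploiting the orthogonality conditions from Lemma~\ref{lem0002}. Substituting (\ref{eq9920}) into (\ref{eq0001}) and using either $-\Delta U_{\ve_\omega}=U_{\ve_\omega}^{p}$ (for $d\geq 7$) or (\ref{eq9902}) (for $3\leq d\leq 6$), with $p=(d+2)/(d-2)$, gives
\begin{eqnarray*}
-\Delta\hat{u}_\omega+(|x|^2-\omega)\hat{u}_\omega-pW^{p-1}\hat{u}_\omega=R_\omega+\mathcal{N}_\omega(\hat{u}_\omega),
\end{eqnarray*}
where $W$ denotes $U_{\ve_\omega}$ (resp.\ $PU_{\ve_\omega}$), $R_\omega=(\omega-|x|^2)U_{\ve_\omega}$ for $d\geq 7$ and $R_\omega=(\omega-\omega_*)PU_{\ve_\omega}+(PU_{\ve_\omega}^p-U_{\ve_\omega}^p)$ for $3\leq d\leq 6$, and $\mathcal{N}_\omega(\hat{u})=(W+\hat{u})^p-W^p-pW^{p-1}\hat{u}$ satisfies the pointwise bound $|\mathcal{N}_\omega(\hat{u})|\lesssim W^{p-2}|\hat{u}|^2+|\hat{u}|^p$.

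Testing against $\hat{u}_\omega$ and applying (\ref{eq8821})/(\ref{eq9921}) yields $\|\hat{u}_\omega\|_X^2\lesssim|\int R_\omega\hat{u}_\omega\,dx|+|\int\mathcal{N}_\omega(\hat{u}_\omega)\hat{u}_\omega\,dx|$, the nonlinear integral being $o(\|\hat{u}_\omega\|_X^2)$ by H\"older, (\ref{Sob-emb}), and $\|\hat{u}_\omega\|_X\to 0$ from Lemma~\ref{lem0002}. The main work is then the source estimate. For $d\geq 7$, the $L^2$-orthogonality $\int U_{\ve_\omega}\hat{u}_\omega=0$ annihilates the $\omega\int U_{\ve_\omega}\hat{u}_\omega$ piece, and the $X$-orthogonality $\langle\hat{u}_\omega, U_{\ve_\omega}\rangle_X=0$ converts $\int|x|^2U_{\ve_\omega}\hat{u}_\omega$ into $-\int U_{\ve_\omega}^p\hat{u}_\omega$ after an integration by parts using $-\Delta U_{\ve_\omega}=U_{\ve_\omega}^p$. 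I would then additionally test the equation against $U_{\ve_\omega}$: after a further integration by parts and a second use of the $X$-orthogonality, this produces the Pohozaev-type identity
\begin{eqnarray*}
-p\int U_{\ve_\omega}^p\hat{u}_\omega\,dx=\omega\|U_{\ve_\omega}\|_{L^2}^2-\||x|U_{\ve_\omega}\|_{L^2}^2+\int\mathcal{N}_\omega(\hat{u}_\omega)U_{\ve_\omega}\,dx.
\end{eqnarray*}
Combining with the scaling (\ref{eq9928}) gives $|\int R_\omega\hat{u}_\omega|\lesssim\omega\ve_\omega^2+\|\hat{u}_\omega\|_X^2$, and after absorbing the last term one reads off $\|\hat{u}_\omega\|_X\lesssim\omega\ve_\omega^2$.

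For $3\leq d\leq 6$, the $(\omega-\omega_*)PU_{\ve_\omega}$ contribution vanishes by $L^2$-orthogonality to $PU_{\ve_\omega}\in\mathcal{M}_\omega$; I estimate $\int(PU_{\ve_\omega}^p-U_{\ve_\omega}^p)\hat{u}_\omega$ by Taylor-expanding using the decomposition (\ref{eq9918}) and applying the pointwise/$L^p$-bounds (\ref{eq9910}), (\ref{eq9999}) and (\ref{eq9917}) on $H$ and $\eta_{\ve_\omega}$, splitting the integral into regions $|x|\lesssim\ve_\omega$, $\ve_\omega\lesssim|x|\lesssim 1$ and $|x|\gtrsim 1$; this yields the $\ve_\omega^{(d-2)/2+\sigma}$ piece of the bound (respectively the $\ve_\omega$ term for $d=3$). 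The $\omega$-dependent contribution $\omega\ve_\omega^{d/2-\sigma}$ (respectively $(\omega-1)\ve_\omega^{1/2}$ for $d=3$) arises from a dual testing argument against $PU_{\ve_\omega}$ completely analogous to the $d\geq 7$ case, and the small exponent $\sigma>0$ absorbs the logarithmic losses produced by the unboundedness of $H$ at infinity (for $d=3,4$) and its logarithmic behavior (for $d=4,6$).

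The main obstacle is obtaining the extra $\omega$-factor: the naive Cauchy--Schwarz bound $|\int|x|^2U_{\ve_\omega}\hat{u}_\omega|\leq\||x|U_{\ve_\omega}\|_{L^2}\|\hat{u}_\omega\|_X\lesssim\ve_\omega^2\|\hat{u}_\omega\|_X$ yields only $\|\hat{u}_\omega\|_X\lesssim\ve_\omega^2$, and extracting the sharp $\omega\ve_\omega^2$ requires the Pohozaev-type cancellation above, which reflects the self-consistent choice of $\ve_\omega$ in the ansatz. For $3\leq d\leq 6$ there are additional difficulties---the regular part $H$ is unbounded at infinity (for $d=3,4$) and has logarithmic corrections (for $d=4,6$), and $\eta_{\ve_\omega}$ has only $W^{2,3/2}$-regularity at $d=6$---all of which are handled by careful region-by-region analysis with H\"older estimates tuned so that the auxiliary exponent $\sigma$ absorbs the logarithmic losses.
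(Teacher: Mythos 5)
Your overall strategy---derive the equation satisfied by $\hat{u}_{\omega}$, test it against $\hat{u}_{\omega}$, invoke the coercivity estimates \eqref{eq8821}/\eqref{eq9921}, and bound the source term---is the same as the paper's, but you treat the source differently. The paper does \emph{not} use the $L^2$-orthogonality to annihilate the $(\omega-\omega_*)PU_{\ve_\omega}$ contribution for $3\leq d\leq 6$: it bounds this term directly by H\"older on $B_R$, via $\omega\|U_{\ve_\omega}\|_{L^{\frac{d}{d-2}+\sigma}(B_R)}\|\hat{u}_{\omega}\|_{L^{\frac{2d}{d-2}}(B_R)}\lesssim \omega\ve_\omega^{\frac{d}{2}-\sigma}\|\hat{u}_{\omega}\|_{L^{2^*}(\bbr^d)}$, and this is precisely the origin of the $\omega$-dependent term in \eqref{eq9935}. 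Your version, which kills that term exactly by the $L^2$-orthogonality of Lemma~\ref{lem0002} and then attributes $\omega\ve_\omega^{d/2-\sigma}$ to a ``dual testing argument,'' is internally inconsistent (if the term is annihilated, nothing is left to produce it), though since \eqref{eq9935} is only an upper bound this does not invalidate the conclusion. For $d\geq 7$ your treatment is in fact more careful than the paper's: the displayed source $E_\omega=\omega U_{\ve_\omega}$ in the paper omits the term $-|x|^2U_{\ve_\omega}$ that necessarily appears when one subtracts $-\Delta U_{\ve_\omega}=U_{\ve_\omega}^{\frac{d+2}{d-2}}$ from \eqref{eq0001}, and your use of the $X$-orthogonality together with the dual test against $U_{\ve_\omega}$ is a legitimate way to recover the sharp rate $\omega\ve_\omega^2$ instead of the naive $\ve_\omega^2$.

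Two caveats. First, your Pohozaev-type identity combined with \eqref{eq9928} yields $|\int_{\bbr^d} U_{\ve_\omega}^{p}\hat{u}_{\omega}\,dx|\lesssim \omega\ve_\omega^2+\ve_\omega^4+o(\|\hat{u}_{\omega}\|_X)$; discarding the $\ve_\omega^4$ term to reach $\omega\ve_\omega^2$ presupposes $\ve_\omega^2\lesssim\omega$, which at this stage is not yet available (it is derived only in Lemma~\ref{lem0004}, which relies on the present lemma). You should either carry the $\ve_\omega^4$ term through (it is harmless in the later expansions) or first establish $\ve_\omega^2\lesssim\omega$ independently, e.g.\ from $\mathcal{I}_\omega<\mathcal{S}$, the inequality $\|u_\omega\|_X^2\geq\mathcal{S}\|u_\omega\|_{L^{2^*}(\bbr^d)}^2+\||x|u_\omega\|_{L^2(\bbr^d)}^2$ and \eqref{eq9950}, which give $\||x|u_\omega\|_{L^2(\bbr^d)}^2<\omega\|u_\omega\|_{L^2(\bbr^d)}^2$. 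Second, you lean on the exact simultaneous $X$- and $L^2$-orthogonality of Lemma~\ref{lem0002} far more heavily than the paper does (exact cancellation rather than a size estimate), so your argument is more fragile if those orthogonality conditions hold only approximately.
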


\begin{proof}
	For $3 \leq d \leq 6$, we obtain from \eqref{eq0001}, \eqref{eq9902}, and \eqref{eq9920} that $\hat{u}_{\omega}$ satisfies
\begin{eqnarray}\label{eq9923}
\left\{\begin{array}{ll}
-\Delta \hat{u}_{\omega} + (|x|^2-\omega_*) \hat{u}_{\omega} -\frac{d+2}{d-2} PU_{\ve_\omega}^{\frac{4}{d-2}} \hat{u}_{\omega} = E_\omega + N_\omega(\hat{u}_{\omega}) & \quad\text{in }\bbr^d,\\
\hat{u}_{\omega}(x)\to0 &\quad\text{as }|x|\to0\end{array}\right.
\end{eqnarray}
where the nonhomogeneous term is
$$
E_\omega := (\omega-\omega_*)PU_{\ve_\omega}+PU_{\ve_\omega}^{\frac{d+2}{d-2}}-U_{\ve_\omega}^{\frac{d+2}{d-2}}
$$
and the nonlinear term satisfies
\begin{eqnarray}
\label{eq9925}
| N_\omega(\hat{u}_{\omega}) | \lesssim  PU_{\ve_\omega}^{\frac{6-d}{d-2}} |\hat{u}_{\omega}|^2+|\hat{u}_{\omega}|^{\frac{d+2}{d-2}}.
\end{eqnarray}
It follows from \eqref{eq9910} and \eqref{eq9909} that
\begin{eqnarray}\label{eq9926}
|E_\omega| \lesssim \ve_{\omega}^{\frac{d+2}{2}}((\omega-\omega_*) |x|^{-(4+d)} + |x|^{-(2+d)})\quad\text{for } |x| \gtrsim1.
\end{eqnarray}
For $|x| \lesssim1$, it follows from \eqref{eq9918} for $4\leq d\leq6$
(for which $\omega_*= 0$) that
\begin{eqnarray*}
|E_\omega|&\lesssim&\omega U_{\ve_{\omega}}+U_{\ve_{\omega}}^{\frac{4}{d-2}}(\ve_\omega^{\frac{d-2}{2}} |H| + |\eta_{\ve_{\omega}}|)
+U_{\ve_\omega}^{\frac{6-d}{d-2}}(\ve_\omega^{d-2}|H|^2+|\eta_{\ve_\omega}|^2)\\
&&+\ve_\omega^{\frac{d+2}{2}}|H|^{\frac{d+2}{d-2}}+|\eta_{\ve_\omega}|^{\frac{d+2}{d-2}},
\end{eqnarray*}
where $\varphi_{\ve_{\omega}} > 0$ is given by \eqref{eqnew8840} and \eqref{eq9912}. By Lemma~\ref{lem0001}, we obtain from \eqref{eq9930} for $4\leq d\leq5$ and $R>0$ sufficiently large,
\begin{align}
\left|\int_{B_{R}}E_\omega \hat{u}_{\omega} dx \right|
\lesssim & \omega\|\hat{u}_{\omega}\|_{L^{\frac{2d}{d-2}}(B_R)} \|U_{\ve_{\omega}}\|_{L^{\frac{d}{d-2}+\sigma}(B_R)}
+\int_{B_{R}}U_{\ve_{\omega}}^{\frac{4}{d-2}}(\ve_\omega^{\frac{d-2}{2}} |H|
+|\eta_{\ve_{\omega}}|) \hat{u}_{\omega} dx\notag\\
& +\int_{B_{R}}U_{\ve_\omega}^{\frac{6-d}{d-2}}(\ve_\omega^{d-2}|H|^2+|\eta_{\ve_\omega}|^2) \hat{u}_{\omega}  dx +\int_{B_{R}}(\ve_\omega^{\frac{d+2}{2}}|H|^{\frac{d+2}{d-2}}+|\eta_{\ve_\omega}|^{\frac{d+2}{d-2}}) \hat{u}_{\omega}  dx\notag\\
\lesssim & (\omega\|U_{\ve_{\omega}}\|_{L^{\frac{d}{d-2}+\sigma}(B_R)}+\ve_\omega^{\frac{d+2}{2}})\|\hat{u}_{\omega} \|_{L^{\frac{2d}{d-2}}(B_R)}\notag\\
& +\int_{B_{R}}U_{\ve_{\omega}}^{\frac{4}{d-2}}(\ve_\omega^{\frac{d-2}{2}} |H|
+ |\eta_{\ve_{\omega}}|) \hat{u}_{\omega} dx +\int_{B_{R}}U_{\ve_\omega}^{\frac{6-d}{d-2}}(\ve_\omega^{d-2}|H|^2+|\eta_{\ve_\omega}|^2) \hat{u}_{\omega} dx\notag\\
\lesssim & (\omega\ve_\omega^{\frac{d}{2}-\sigma}+\ve_\omega^{\frac{d+2}{2}})\|\hat{u}_{\omega}\|_{L^{\frac{2d}{d-2}}(\bbr^d)}+|I|,\label{eq9933}
\end{align}
where
\begin{eqnarray*}
I=\int_{B_{R}}U_{\ve_{\omega}}^{\frac{4}{d-2}}(\ve_\omega^{\frac{d-2}{2}} |H|
+ |\eta_{\ve_{\omega}}|) \hat{u}_{\omega} dx+\int_{B_{R}}U_{\ve_\omega}^{\frac{6-d}{d-2}}(\ve_\omega^{d-2}|H|^2+|\eta_{\ve_\omega}|^2) \hat{u}_{\omega} dx.
\end{eqnarray*}
Note that $H,\eta_{\ve_\omega}\in L^\infty(\bbr^d)$ for $4\leq d\leq5$ and $H,\eta_{\ve_\omega}\in L^{\frac{3s}{3-s}}(\bbr^d)$ for $d=6$ with $1<s<3$ by Lemma~\ref{lem0001}.  Thus, for $4\leq d\leq5$, it follows from \eqref{eq9999} and \eqref{eq9930} that
\begin{align*}
|I|&\lesssim \ve_\omega^{\frac{d-2}{2}}\|U_{\ve_\omega}\|_{L^{\frac{8d}{d^2-4}}(B_R)}^{\frac{4}{d-2}}\|\hat{u}_{\omega}\|_{L^{\frac{2d}{d-2}}(B_R)} +\ve_\omega^{d-2}\|U_{\ve_\omega}\|_{L^{\frac{2d(6-d)}{d^2-4}}(B_R)}^{\frac{6-d}{d-2}}\|\hat{u}_{\omega}\|_{L^{\frac{2d}{d-2}}(B_R)}\\
&\lesssim \ve_\omega^{\frac{d-2}{2}+\frac{4}{d+2}}\|\hat{u}_{\omega}\|_{L^{\frac{2d}{d-2}}(B_R)}\\
&\lesssim \ve_\omega^{\frac{d-2}{2}+\sigma}\|\hat{u}_{\omega}\|_{L^{\frac{2d}{d-2}}(\bbr^d)},
\end{align*}
and for $d=6$, it follows by \eqref{eq9917} that
\begin{align*}
|I| &\lesssim \ve_\omega^2\|U_{\ve_\omega}\|_{L^{\frac{p}{p-1}}(B_R)}\|\hat{u}_{\omega}\|_{L^{3}(B_R)}
+\ve_\omega^4\|\hat{u}_{\omega}\|_{L^{3}(B_R)}\\
&\lesssim \ve_\omega^{2+\sigma}\|\hat{u}_{\omega}\|_{L^{3}(\bbr^d)}.
\end{align*}
It follows from \eqref{eq9926} and \eqref{eq9933} that for $4\leq d\leq6$,
\begin{eqnarray}\label{eqnew7750}
\left| \int_{\bbr^d}E_\omega \hat{u}_{\omega} dx \right| \lesssim(\omega\ve_\omega^{\frac{d}{2}-\sigma}+\ve_\omega^{\frac{d-2}{2}+\sigma})\|\hat{u}_{\omega}\|_{L^{2^*}(\bbr^d)}.
\end{eqnarray}
For $d=3$, the estimates are similar to that of $d=4,5$.  The difference is that $\omega_* = 1$ and we do not know if $\varphi_{\ve_{\omega}}>0$ in $\bbr^3$.  Thus, we write
\begin{align*}
|E_\omega| \lesssim &(\omega-1)U_{\ve_{\omega}}+U_{\ve_{\omega}}^{\frac{4}{d-2}}(\ve_\omega^{\frac{d-2}{2}} |H| + |\eta_{\ve_{\omega}}|)
+ U_{\ve_\omega}^{\frac{6-d}{d-2}}(\ve_\omega^{d-2}|H|^2+|\eta_{\ve_\omega}|^2)\\
&+\ve_\omega^{\frac{d+2}{2}}|H|^{\frac{d+2}{d-2}}+|\eta_{\ve_\omega}|^{\frac{d+2}{d-2}}+(\omega-1)3^{\frac14}\ve_{\omega}^{\frac12}|H|+(\omega-1)|\eta_{\ve_{\omega}}|,
\end{align*}
which implies that
\begin{eqnarray}\label{eqnew7751}
\left| \int_{\bbr^d}E_\omega \hat{u}_{\omega} dx \right| \lesssim ((\omega-1)\ve_\omega^{\frac{1}{2}}+\ve_\omega)\|\hat{u}_{\omega}\|_{L^{2^*}(\bbr^d)}.
\end{eqnarray}
Estimates \eqref{eqnew7750} and \eqref{eqnew7751}, together with \eqref{eq9921}, \eqref{eq9923} and \eqref{eq9925}, imply \eqref{eq9935} for $3\leq d\leq6$.

For $d\geq7$, we obtain from \eqref{eq0001}, \eqref{eq9901}, and \eqref{eq9920} that $\hat{u}_{\omega}$ satisfies
\begin{eqnarray}\label{eq9823}
\left\{\begin{array}{ll} -\Delta \hat{u}_{\omega}+|x|^2 \hat{u}_{\omega} -\frac{d+2}{d-2}U_{\ve_\omega}^{\frac{4}{d-2}} \hat{u}_{\omega} = E_\omega + N_\omega(\hat{u}_{\omega}) &\quad\text{in }\bbr^d,\\
\hat{u}_{\omega}(x)\to0 &\quad\text{as }|x|\to0,\end{array}\right.
\end{eqnarray}
where $E_{\omega} := \omega U_{\ve_\omega}$ and
\begin{equation}
\label{eq9824}
|N_\omega(\hat{u}_{\omega})| \lesssim |\hat{u}_{\omega}|^{\frac{d+2}{d-2}}.
\end{equation}
It follows from \eqref{eq9909} that
\begin{align*}
\left|\int_{B_{R}}\widehat{E}_\omega \hat{u}_{\omega} dx \right| &\lesssim \omega\|U_{\ve_\omega}\|_{L^{2(\bbr^d)}}\| \hat{u}_{\omega} \|_{L^{2}(\bbr^d)}\notag\\
&\lesssim \omega\ve_\omega^{2}\|\hat{u}_{\omega}\|_{L^{2}(\bbr^d)},
\end{align*}
which, together with \eqref{eq8821}, \eqref{eq9823} and \eqref{eq9824} implies \eqref{eq9935} for $d\geq7$.
\end{proof}

\subsection{Asymptotic behaviors of $\mathcal{I}_{\omega}$ and $\ve_{\omega}$ as $\omega \to \omega_*$}

It follows from \eqref{eq0001} and (\ref{eq0002}) that if
$u_{\omega} = (\mathcal{I}_{\omega})^{\frac{d-2}{4}} v_{\omega}$, then
\begin{eqnarray}
\label{eq9950}
\mathcal{I}_{\omega} =\frac{\|u_\omega\|_X^2-\omega\|u_\omega\|_{L^2(\bbr^d)}^2}{\|u_\omega\|_{L^{\frac{2d}{d-2}}(\bbr^d)}^2} = \|u_\omega\|_{L^{\frac{2d}{d-2}}(\bbr^d)}^{\frac{4}{d-2}},
\end{eqnarray}
which yields
\begin{eqnarray}
\label{I-omega}
	\mathcal{I}_{\omega}
=(\|u_\omega\|_X^2-\omega\|u_\omega\|_{L^2(\bbr^d)}^2)^{\frac{2}{d}}.
\end{eqnarray}
The following four lemmas give details in the derivation of Theorem \ref{thm0001} for different values of $d \geq 3$.
The derivation is simpler for $d \geq 7$ and becomes computationally
challenging for $3 \leq d \leq 6$ due to different leading order terms in the expansion of $\mathcal{I}_{\omega}$ and due to different regularity of
the non-singular part $H$ of Green's function. Some similar computations
can be found in \cite{BN83,B86,FKK20,FKK21,R90} for $d \geq 4$ and
in \cite{D02,E04,FKK21,H91} for $d = 3$.

\begin{lemma}\label{lem0004}
For $d\geq7$, we have
\begin{eqnarray}\label{eq9943}
\mathcal{I}_{\omega}=\mathcal{S}-\mathcal{S}^{-\frac{d-2}{2}}\frac{\|U\|_{L^2(\bbr^d)}^4}{2d\|xU\|_{L^2(\bbr^d)}^2}\omega^2+o(\omega^2)
\end{eqnarray}
and
\begin{eqnarray}\label{eq9941}
\ve_\omega=\bigg(\frac{\|U\|_{L^2(\bbr^d)}^2}{2\|xU\|_{L^2(\bbr^d)}^2}\omega\bigg)^{\frac{1}{2}}+o(\omega^{\frac12})
\end{eqnarray}
as $\omega\to0$.
\end{lemma}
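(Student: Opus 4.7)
The approach combines three ingredients: the energy identity $\mathcal{I}_\omega^{d/2} = \|u_\omega\|_X^2 - \omega\|u_\omega\|_{L^2}^2$ from~\eqref{I-omega}, the decomposition $u_\omega = U_{\ve_\omega} + \hat{u}_\omega$ of Lemma~\ref{lem0002}, and a Pohozaev identity for the ground state. The orthogonality of $\hat{u}_\omega$ to $U_{\ve_\omega}$ in both $X$ and $L^2$ eliminates the cross terms, yielding $\|u_\omega\|_X^2 = \|U_{\ve_\omega}\|_X^2 + \|\hat{u}_\omega\|_X^2$ and similarly for the $L^2$-norm; combined with the scalings~\eqref{eq9928} and $\|\nabla U_\ve\|_{L^2}^2 = \mathcal{S}^{d/2}$, this produces
\[
\mathcal{I}_\omega^{d/2} = \mathcal{S}^{d/2} + \ve_\omega^4\||x|U\|_{L^2}^2 - \omega\ve_\omega^2\|U\|_{L^2}^2 + \|\hat{u}_\omega\|_X^2 - \omega\|\hat{u}_\omega\|_{L^2}^2.
\]
By Lemma~\ref{lem0003} and the spectral bound $\|u\|_X^2 \geq d\|u\|_{L^2}^2$, the two $\hat{u}_\omega$-terms are $O(\omega^2\ve_\omega^4)$, which will be $o(\omega^2)$ once $\ve_\omega^2 = O(\omega)$ is established.

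To pin down $\ve_\omega$ I would derive a Pohozaev identity by multiplying~\eqref{eq0001} by $x\cdot\nabla u_\omega$ and integrating by parts; combining with the energy identity $\|u_\omega\|_X^2-\omega\|u_\omega\|_{L^2}^2 = \|u_\omega\|_{L^{2d/(d-2)}}^{2d/(d-2)}$ to eliminate the Dirichlet and nonlinear terms yields the remarkably clean relation
\[
2\||x|u_\omega\|_{L^2}^2 = \omega\|u_\omega\|_{L^2}^2.
\]
Expanding both sides through the decomposition, with the cross term controlled by Cauchy--Schwarz as $|\int|x|^2 U_{\ve_\omega}\hat{u}_\omega\,dx| \leq \||x|U_{\ve_\omega}\|_{L^2}\||x|\hat{u}_\omega\|_{L^2} \lesssim \ve_\omega^2\|\hat{u}_\omega\|_X \lesssim \omega\ve_\omega^4$, gives $2\ve_\omega^4\||x|U\|_{L^2}^2 = \omega\ve_\omega^2\|U\|_{L^2}^2 + O(\omega\ve_\omega^4)$. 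Dividing through by $\ve_\omega^2$ and solving produces~\eqref{eq9941} and in particular confirms $\ve_\omega^2 = O(\omega)$.

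Substituting this asymptotic for $\ve_\omega^2$ back into the expansion of $\mathcal{I}_\omega^{d/2}$, the leading terms combine into $-\frac{\|U\|_{L^2}^4}{4\||x|U\|_{L^2}^2}\omega^2 + o(\omega^2)$, giving $\mathcal{I}_\omega^{d/2} = \mathcal{S}^{d/2} - \frac{\|U\|_{L^2}^4}{4\||x|U\|_{L^2}^2}\omega^2 + o(\omega^2)$. A Taylor expansion of $x \mapsto x^{2/d}$ around $x = \mathcal{S}^{d/2}$ introduces an extra factor of $2/d$ and produces~\eqref{eq9943}. The main technical obstacle is the sharp bound $\|\hat{u}_\omega\|_X \lesssim \omega\ve_\omega^2$ from Lemma~\ref{lem0003}: a crude coercivity argument on the linearized equation for $\hat{u}_\omega$ would only give $\|\hat{u}_\omega\|_X \lesssim \ve_\omega^2 \sim \omega$, so that $\|\hat{u}_\omega\|_X^2 = O(\omega^2)$ would contribute at the same order as the leading correction and spoil the coefficient. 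The sharper estimate in Lemma~\ref{lem0003}, which exploits the $L^2$-orthogonality of $\hat{u}_\omega$ to $U_{\ve_\omega}$ to eliminate the leading contribution of $\omega U_{\ve_\omega}$, is precisely what makes the computation go through.
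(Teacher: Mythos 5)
Your proposal is correct, and the computation leading to \eqref{eq9943} from the expansion of $\mathcal{I}_{\omega}^{d/2}$ matches the paper's; the difference lies entirely in how $\ve_\omega$ is pinned down. The paper uses the variational characterization: it inserts the test functions $U_{\ve}$ into \eqref{eq0002} to get the upper bound $\mathcal{I}_{\omega}\leq\mathcal{S}-\mathcal{S}^{-\frac{d-2}{2}}\frac{\|U\|_{L^2}^4}{2d\|xU\|_{L^2}^2}\omega^2$, and then matches this against the expansion $\mathcal{I}_{\omega}=\mathcal{S}+\frac{2}{d}\mathcal{S}^{-\frac{d-2}{2}}\big(\ve_\omega^4\|xU\|_{L^2}^2-\omega\ve_\omega^2\|U\|_{L^2}^2\big)+o(\omega\ve_\omega^2)$; since the bracket is a quadratic in $\ve_\omega^2$ bounded below by $-\frac{\omega^2\|U\|^4}{4\|xU\|^2}$, the matching forces $\ve_\omega^2$ to sit at the minimizer, giving \eqref{eq9941}. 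You instead extract $\ve_\omega$ from the virial/Pohozaev relation $2\||x|u_\omega\|_{L^2}^2=\omega\|u_\omega\|_{L^2}^2$, which is indeed exact for solutions of \eqref{eq0001} at $p=\frac{2d}{d-2}$ (it is the specialization of the Pohozaev identity the paper already cites from \cite[Proposition~2.2]{BFPS21}, and also follows from stationarity of the action under $u\mapsto\lambda^{\frac{d-2}{2}}u(\lambda\cdot)$); your error bookkeeping for the cross terms is consistent with Lemma~\ref{lem0003}, and the cancellation of the first-order perturbation of $\ve_\omega^2$ about its leading value is what keeps the $o(\omega^2)$ remainder in \eqref{eq9943}. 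Interestingly, the paper's own remark after Lemma~\ref{lem0006} distinguishes exactly these two methods — the test-function method (used for $d\geq6$) and the equation-based method (used for $d=3,4,5$) — so you have effectively carried out the second method in the regime where the paper uses the first. What each buys: the paper's route also delivers the sharp upper bound on $\mathcal{I}_{\omega}$ as a standalone statement and needs no Pohozaev-type identity, while your route does not use minimality of $u_\omega$ at this stage (only the decomposition and Lemma~\ref{lem0003}) and determines $\ve_\omega$ by a single algebraic identity; the only point you should make explicit is the justification of the integration by parts in the Pohozaev identity (decay of $u_\omega$ at infinity), which is standard here given the Gaussian confinement and is already available from \cite{BFPS21}.
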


\begin{proof}
By \eqref{eq9920}, \eqref{eq9928}, and the estimates of Lemma \ref{lem0003}, we obtain from (\ref{I-omega}):
\begin{eqnarray}
\mathcal{I}_{\omega}&=&\bigg(\|U_{\ve_\omega}\|_X^2-\omega\ve_\omega^2\|U\|_{L^2(\bbr^d)}^2+o(\omega\ve_\omega^2)\bigg)^{\frac{2}{d}}\notag\\
&=&\bigg(\mathcal{S}^{\frac{d}{2}}+\ve_\omega^4\|xU_{\ve_\omega}\|_{L^2(\bbr^d)}^2
-\omega\ve_\omega^2\|U\|_{L^2(\bbr^d)}^2+o(\omega\ve_\omega^2)\bigg)^{\frac{2}{d}}\notag\\
&=&\mathcal{S}+\frac{2}{d}\mathcal{S}^{-\frac{d-2}{2}}(\ve_\omega^4\|xU\|_{L^2(\bbr^d)}^2
-\omega\ve_\omega^2\|U\|_{L^2(\bbr^d)}^2)+o(\omega\ve_\omega^2)\label{eqnew9840}.
\end{eqnarray}
On the other hand, by using $\{ U_{\ve} \}_{\ve > 0}$ as a test function of $\mathcal{I}_{\omega}$ for $d\geq7$, we obtain
\begin{align}
\label{eq9938}
\mathcal{I}_{\omega} &\leq \frac{\|U_{\ve}\|_X^2-\omega\|U_{\ve}\|_{L^2(\bbr^d)}^2}{\|U_{\ve}\|_{L^{2^*}(\bbr^d)}^2}\notag\\
&= \mathcal{S}+\frac{2}{d}\mathcal{S}^{-\frac{d-2}{2}}(\ve^4\|xU\|_{L^2(\bbr^d)}^2
-\omega \ve^2\|U\|_{L^2(\bbr^d)}^2).
\end{align}
Minimizing the right hand side of \eqref{eq9938} in terms of $\ve$ implies that
\begin{eqnarray}\label{eq9942}
\mathcal{I}_{\omega}&\leq&\mathcal{S}-\mathcal{S}^{-\frac{d-2}{2}}\frac{\|U\|_{L^2(\bbr^d)}^4}{2d\|xU\|_{L^2(\bbr^d)}^2}\omega^2.
\end{eqnarray}
Thus, combining \eqref{eqnew9840} and \eqref{eq9942}, we have  \eqref{eq9943} and \eqref{eq9941}.
\end{proof}

\vskip0.2in

\begin{lemma}\label{lem0005}
For $d=6$, we have
\begin{eqnarray}\label{eq9954}
\mathcal{I}_{\omega}=\mathcal{S}-\mathcal{S}^{-2}\frac{\|U\|_{L^2(\bbr^d)}^4\omega^2}{24^3|\mathbb{S}^5||\log\omega|}
+o\left(\frac{\omega^2}{\log\omega}\right)
\end{eqnarray}
and
\begin{eqnarray}\label{eq9955-double}
\ve_\omega=\bigg(\frac{\|U\|_{L^2(\bbr^d)}^2\omega}{12\times24^2|\mathbb{S}^5||\log\omega|}\bigg)^{\frac{1}{2}}+o\left(\frac{\omega}{|\log\omega|}\right)
\end{eqnarray}
as $\omega\to0$.
\end{lemma}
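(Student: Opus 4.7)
The plan is to mirror the proof of Lemma~\ref{lem0004} for $d\geq 7$, while handling two new features specific to $d=6$: the quantity $\|\,|x|U\,\|_{L^2(\bbr^6)}^2=+\infty$ forces me to test with $PU_\varepsilon$ rather than $U_\varepsilon$, and the regular part $H$ of Green's function has a logarithmic singularity at the origin, which is ultimately the source of the $|\log\omega|$ in the final answer. I will work simultaneously with the identity $\mathcal{I}_\omega^3 = \|u_\omega\|_X^2-\omega\|u_\omega\|_{L^2}^2$ from \eqref{I-omega} and with the test-function upper bound
\begin{equation*}
\mathcal{I}_\omega \;\leq\; \frac{\|PU_\varepsilon\|_X^2 - \omega\|PU_\varepsilon\|_{L^2}^2}{\|PU_\varepsilon\|_{L^3(\bbr^6)}^2},\qquad \varepsilon>0.
\end{equation*}

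The heart of the argument is the asymptotic expansion of these $PU_\varepsilon$-norms to order $\varepsilon^4|\log\varepsilon|$. Testing \eqref{eq9902} against $PU_\varepsilon$ itself and using $\omega_*=0$, $(d+2)/(d-2)=2$ gives $\|PU_\varepsilon\|_X^2 = \int U_\varepsilon^2 PU_\varepsilon\,dx = \mathcal{S}^3 - \int U_\varepsilon^2\varphi_\varepsilon\,dx$ with $\varphi_\varepsilon = U_\varepsilon - PU_\varepsilon = 24\varepsilon^2 H + \eta_\varepsilon$ by Lemma~\ref{lem0001}. To evaluate $\int U_\varepsilon^2 H\,dx$, I would first solve $-\Delta H+|x|^2 H=|x|^{-2}$ near the origin to obtain $H(x) = -\tfrac14\log|x|+h_0(x)$ with $h_0$ bounded near $0$, then rescale $x=\varepsilon y$ and split the integral at $|y|=1/\varepsilon$: on the inner region $H(\varepsilon y) = -\tfrac14\log\varepsilon-\tfrac14\log|y|+O(1)$ produces the leading contribution $\tfrac14|\log\varepsilon|\,\|U\|_{L^2(\bbr^6)}^2$, while the outer region is $O(\varepsilon^3)$ because $U^2\lesssim|y|^{-8}\in L^1(\bbr^6)$ there. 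The $\eta_\varepsilon$-contribution is absorbed into $o(\varepsilon^4|\log\varepsilon|)$ by H\"older's inequality applied to $\|U_\varepsilon^2\|_{L^3(\bbr^6)}=\mathcal{S}^2$ together with $\|\eta_\varepsilon\|_{W^{2,3/2}(\bbr^6)}\lesssim\varepsilon^4$ from \eqref{eq9917}. Parallel calculations give $\|PU_\varepsilon\|_{L^2}^2=\varepsilon^2\|U\|_{L^2}^2+O(\varepsilon^4)$ (exploiting $H\in L^2(\bbr^6)$ for $d=6$) and an analogous $\varepsilon^4|\log\varepsilon|$ correction to $\|PU_\varepsilon\|_{L^3}^3$.

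Feeding the three expansions into the upper-bound quotient and expanding I expect
\begin{equation*}
\mathcal{I}_\omega \;\leq\; \mathcal{S} + \mathcal{S}^{-2}\bigl[\alpha_1\varepsilon^4|\log\varepsilon| - \alpha_2\,\omega\varepsilon^2\bigr] + o(\omega^2/|\log\omega|),
\end{equation*}
with $\alpha_1,\alpha_2>0$ explicit multiples of $\|U\|_{L^2(\bbr^6)}^2$. Minimizing the bracket in $\varepsilon$ gives a critical relation of the form $\varepsilon^2|\log\varepsilon|\sim\omega$, which self-consistently forces $|\log\varepsilon_\omega|=\tfrac12|\log\omega|+o(|\log\omega|)$ and hence the advertised size of $\varepsilon_\omega^2$. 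For the matching lower bound I would plug $u_\omega = PU_{\varepsilon_\omega}+\hat u_\omega$ into $\mathcal{I}_\omega^3=\|u_\omega\|_X^2-\omega\|u_\omega\|_{L^2}^2$; the cross terms vanish by the simultaneous $X$- and $L^2$-orthogonality of $\hat u_\omega\in\mathcal{M}_\omega^\perp$ from Lemma~\ref{lem0002}, and $\|\hat u_\omega\|_X^2-\omega\|\hat u_\omega\|_{L^2}^2=o(\omega^2/|\log\omega|)$ follows from the $d=6$ bound $\|\hat u_\omega\|_X\lesssim\omega\varepsilon_\omega^{3-\sigma}+\varepsilon_\omega^{2+\sigma}$ of Lemma~\ref{lem0003} once $\varepsilon_\omega^2\sim\omega/|\log\omega|$ is in hand. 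Taking cube roots pins down both the $\omega^2/|\log\omega|$ coefficient of $\mathcal{S}-\mathcal{I}_\omega$ and the leading $\omega/|\log\omega|$ size of $\varepsilon_\omega^2$; collecting the numerical factors $(24=d(d-2),\,|\mathbb{S}^5|,\,\|U\|_{L^2(\bbr^6)}^2)$ produces the two stated formulas.

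The main technical obstacle is precisely the singular integral $\int U^2(y) H(\varepsilon y)\,dy$ controlling the $\varepsilon^4|\log\varepsilon|$ correction: because $H\notin L^\infty(\bbr^6)$ (only $W^{2,s}_{\mathrm{loc}}$ for $s<3$), one cannot simply replace $H(\varepsilon y)$ by $H(0)$ as in the $d=3,4,5$ analogues, and the precise leading behavior $H(x)\sim-\tfrac14\log|x|$ together with the split at $|y|=1/\varepsilon$ is essential. Keeping the $\eta_\varepsilon$-error genuinely subleading forces the use of the $W^{2,3/2}$-estimate from Lemma~\ref{lem0001} rather than a pointwise bound, and recovering the exact constant $a_6$ requires tracking the self-consistency loop between $|\log\varepsilon_\omega|$ and $|\log\omega|$ to $o(\log\log)$ accuracy.
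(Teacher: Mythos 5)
Your proposal is correct and follows essentially the same route as the paper: the lower bound via $\mathcal{I}_\omega^3=\|u_\omega\|_X^2-\omega\|u_\omega\|_{L^2}^2$ with the decomposition $u_\omega=PU_{\ve_\omega}+\hat u_\omega$ and Lemma~\ref{lem0003}, the key expansion $H(x)=-\tfrac14\log|x|+\mathcal{O}(1)$ feeding into $\int U_{\ve}^2 H\,dx$ to produce the $\ve^4|\log\ve|$ correction, the $W^{2,3/2}$ control of $\eta_\ve$, and a matching test-function upper bound minimized in $\ve$. The only cosmetic difference is that the paper tests with the truncation $W_\ve=(U_\ve-24\ve^2H)\phi_R$ rather than with $PU_\ve$ itself, which changes nothing in the leading-order expansion.
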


\begin{proof}
With $d = 6$, expression (\ref{I-omega}) becomes
\begin{eqnarray*}
\mathcal{I}_{\omega}=(\|u_\omega\|_X^2-\omega\|u_\omega\|_{L^2(\bbr^6)}^2)^{\frac{1}{3}}.
\end{eqnarray*}
By Lemmas~\ref{lem0002} and \ref{lem0003}, we have
\begin{eqnarray}\label{eqnew4450}
\|u_\omega\|_X^2=\|PU_{\ve_{\omega}}\|_X^2+ \mathcal{O}(\omega^2\ve_{\omega}^{6-\sigma}+\ve_{\omega}^{4+\sigma}),
\end{eqnarray}
where $\sigma>0$ is a small constant given by Lemma~\ref{lem0003} and if necessary, $\sigma$ can be taken arbitrary small.
By Lemma~\ref{lem0001}, we obtain from \eqref{eq9902} that
\begin{eqnarray}
\|PU_{\ve_{\omega}}\|_X^2&=&\int_{\bbr^6}U_{\ve_{\omega}}^2PU_{\ve_{\omega}}dx\notag\\
&=&\mathcal{S}^{3}-24\ve_{\omega}^{2}\int_{B_R}U_{\ve_{\omega}}^2Hdx+
\mathcal{O}(\ve_{\omega}^4),\label{eqnew4451}
\end{eqnarray}
where $R>0$ is sufficient large.
Since $H\in L^\infty_{loc}(\bbr^6\backslash\{0\})$, we need to further expand $H$ in $B_R$.  Since $\Delta(\log |x|)=\frac{4}{|x|^2}$ in $\bbr^6$ in the sense of distributions, it follows from \eqref{eq9905} that $\widehat{H} := H+\frac{1}{4}\log |x|$
satisfies the following equation:
\begin{eqnarray*}
-\Delta \widehat{H}+|x|^2\widehat{H}=|x|^2\log|x|\quad\text{in }\bbr^6,
\end{eqnarray*}
in the sense of distributions.  Since $|x|^2\log|x|\in W^{1,\infty}_{loc}(\bbr^6)$, by the classical elliptic regularity, $\widehat{H}\in C^{2}_{loc}(\bbr^6)$.  It follows that $H=-\frac{1}{4}\log |x|+ \mathcal{O}(1)$ in $B_R$ as $R \to \infty$.
Thus, we obtain from \eqref{eq9957} that
\begin{align*}
\int_{B_R}U_{\ve_{\omega}}^2Hdx&=-\frac{1}{4}\int_{B_R}U_{\ve_{\omega}}^2\log|x|dx+ \mathcal{O}\left(\int_{B_R}U_{\ve_{\omega}}^2dx \right)\\
&=144|\mathbb{S}^5|\ve_{\omega}^2|\log\ve_{\omega}|+
\mathcal{O}(\ve_{\omega}^{2+\sigma}),
\end{align*}
which, together with \eqref{eqnew4450} and \eqref{eqnew4451}, implies
\begin{eqnarray}\label{eqnew4452}
\|u_\omega\|_X^2=\mathcal{S}^{3}-6\times24^2|\mathbb{S}^5|\ve_{\omega}^4|\log\ve_{\omega}|+\mathcal{O}(\ve_{\omega}^4).
\end{eqnarray}
Similarly, by Lemmas~\ref{lem0001} and \ref{lem0003} and the expansion $H=-\frac{1}{4}\log |x| +O(1)$ in $B_R$ for any sufficiently large $R>0$, we have
\begin{eqnarray}\label{eqnew4453}
\int_{\bbr^6}PU_{\ve_{\omega}}^2dx=\ve_{\omega}^2\|U\|_{L^2(\bbr^d)}^2+o(\ve_{\omega}^2).
\end{eqnarray}
Thus, by \eqref{eqnew4452} and \eqref{eqnew4453}, we have
\begin{eqnarray}\label{eq9845}
\mathcal{I}_{\omega}=\mathcal{S}+\frac13\mathcal{S}^{-2}(6\times24^2|\mathbb{S}^5|\ve_{\omega}^4|\log\ve_{\omega}|-\omega\ve_{\omega}^2\|U\|_{L^2(\bbr^d)}^2
+o(|\ve_{\omega}^4|\log\ve_{\omega}|+\omega\ve_{\omega}^2)).
\end{eqnarray}

On the other hand, by using $W_\ve := (U_\ve-24\ve^2H)\phi_R$, where $\phi_R\in[0, 1]$ is a smooth cut-off function such that $\phi_R=1$ for $|x|\leq R$ and $\phi_R=0$ for $|x|\geq R+1$, as a test function of $\mathcal{I}_{\omega}$ for $d=6$, we have
\begin{eqnarray*}
\mathcal{I}_{\omega}&\leq&\frac{\|W_{\ve}\|_X^2-\omega\|W_{\ve}\|_{L^2(\bbr^d)}^2}{\|W_{\ve}\|_{L^{2^*}(\bbr^d)}^2},
\end{eqnarray*}
which implies
\begin{eqnarray}\label{eqnew9845}
\mathcal{I}_{\omega}\leq\mathcal{S}+\frac13\mathcal{S}^{-2}(6\times24^2|\mathbb{S}^5|\ve^4|\log\ve|-\omega\ve^2\|U\|_{L^2(\bbr^6)}^2
+o(|\ve^4|\log\ve|+\omega\ve^2)).
\end{eqnarray}
Minimizing the right hand side of \eqref{eqnew9845} in terms of $\ve$ implies that
\begin{eqnarray}\label{eq9952}
\mathcal{I}_{\omega}\leq\mathcal{S}-\mathcal{S}^{-2}\frac{\|U\|_{L^2(\bbr^d)}^4\omega^2}{12\times24^2|\mathbb{S}^5||\log\omega|}
+o(\frac{\omega^2}{|\log\omega}|).
\end{eqnarray}
Thus, by combining \eqref{eq9845} and \eqref{eq9952}, we have \eqref{eq9954} and \eqref{eq9955-double}.
\end{proof}

\begin{lemma}\label{lem0006}
For $d=4,5$, we have
\begin{eqnarray}\label{eq9956}
\mathcal{I}_{\omega}=\left\{\aligned&\mathcal{S}-\sqrt{2}\mathcal{S}^{-2}H(0)\|U\|_{L^{3}(\bbr^d)}^{3}e^{\frac{3\sqrt{2}H(0)\|U\|_{L^{3}(\bbr^d)}^{3}}{2\omega|\mathbb{S}^3|}}
+o(e^{-\frac{1}{\omega}}),\quad d=4,\\
&\mathcal{S}-\mathcal{S}^{-\frac{5}{2}}\frac{54\|U\|_{L^2(\bbr^5)}^6}{1715\times15^{\frac{3}{2}}(H(0)\|U\|_{L^{\frac{7}{3}}(\bbr^d)}^{\frac{7}{3}})^2}\omega^3+o(\omega^3),\quad d=5,\endaligned\right.
\end{eqnarray}
and
\begin{eqnarray}\label{eq9955}
\ve_{\omega}=\left\{\aligned&e^{-\frac{3\sqrt{2}H(0)\|U\|_{L^{3}(\bbr^d)}^{3}}{4\omega|\mathbb{S}^3|}}+o(e^{-\frac{1}{\omega}}),\quad d=4,\\
&\frac{3\|U\|_{L^2(\bbr^5)}^2}{7\times15^{\frac{3}{4}}H(0)\|U\|_{L^{\frac{7}{3}}(\bbr^d)}^{\frac{7}{3}}}\omega+o(\omega),\quad d=5\endaligned\right.
\end{eqnarray}
as $\omega\to0$.
\end{lemma}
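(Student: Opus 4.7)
The plan is to adapt the three-step strategy of Lemmas~\ref{lem0004} and~\ref{lem0005}. The decisive structural fact, by Lemma~\ref{lem0001}, is that $H \in L^\infty(\mathbb{R}^d) \cap C^\alpha_{\mathrm{loc}}(\mathbb{R}^d)$ for $d=4,5$, so $H(0)$ is well-defined and $H(x) = H(0) + O(|x|^\alpha)$ near the origin---unlike the logarithmic singularity of $H$ at $x=0$ in the $d=6$ case. This clean Taylor expansion drives the whole computation.

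For the sharp lower bound, I first use Lemma~\ref{lem0002} together with the $X$- and $L^2$-orthogonality of $\hat u_\omega$ to $PU_{\varepsilon_\omega}$ and identity~\eqref{I-omega} to write
\[
\mathcal{I}_\omega^{d/2} = \|PU_{\varepsilon_\omega}\|_X^2 - \omega\,\|PU_{\varepsilon_\omega}\|_{L^2(\mathbb{R}^d)}^2 + O(\|\hat u_\omega\|_X^2).
\]
Pairing \eqref{eq9902} (with $\omega_*=0$) against $PU_{\varepsilon_\omega}$ reduces the $X$-norm to $\int U_{\varepsilon_\omega}^{(d+2)/(d-2)} PU_{\varepsilon_\omega}\,dx$; substituting \eqref{eq9918} and the Taylor expansion of $H$ yields
\[
\|PU_{\varepsilon_\omega}\|_X^2 = \mathcal{S}^{d/2} - [d(d-2)]^{(d-2)/4} H(0)\,\|U\|_{L^{(d+2)/(d-2)}(\mathbb{R}^d)}^{(d+2)/(d-2)}\,\varepsilon_\omega^{d-2} + o(\varepsilon_\omega^{d-2}).
\]
The $L^2$-norm splits by dimension: for $d=5$, \eqref{eq9930} with $q=2$ gives $\|PU_{\varepsilon_\omega}\|_{L^2}^2 = \|U\|_{L^2}^2\varepsilon_\omega^2 + o(\varepsilon_\omega^2)$; for $d=4$, the critical scaling $d/(d-2)=2$ falls in the log regime of \eqref{eq9957}, and a direct scaling of $U_{\varepsilon_\omega}^2$ (using \eqref{eq9910} to handle the $|x|>1$ tail) produces $\|PU_{\varepsilon_\omega}\|_{L^2}^2 = 8|\mathbb{S}^3|\,\varepsilon_\omega^2|\log\varepsilon_\omega| + O(\varepsilon_\omega^2)$. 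Taking $\sigma>0$ small, the bounds of Lemma~\ref{lem0003} make $\|\hat u_\omega\|_X^2$ genuinely lower order than the leading $\varepsilon_\omega^{d-2}$ and $\omega\varepsilon_\omega^2$ contributions.

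For the matching upper bound, I imitate the $d=6$ argument of Lemma~\ref{lem0005} by using the truncated test function $W_\varepsilon := (U_\varepsilon - [d(d-2)]^{(d-2)/4}\varepsilon^{(d-2)/2} H)\phi_R$ with a smooth cut-off $\phi_R$, and expanding the Rayleigh quotient $(\|W_\varepsilon\|_X^2 - \omega\|W_\varepsilon\|_{L^2}^2)/\|W_\varepsilon\|_{L^{2d/(d-2)}}^2$ by the same substitutions. With $A_d := [d(d-2)]^{(d-2)/4} H(0)\,\|U\|_{L^{(d+2)/(d-2)}}^{(d+2)/(d-2)}$, this yields an upper bound of the shape
\[
\mathcal{I}_\omega \le \mathcal{S} + \mathcal{S}^{-(d-2)/2}\bigl(A_d\,\varepsilon^{d-2} - \omega\,\Lambda_d(\varepsilon)\bigr) + o(\cdots),
\]
where $\Lambda_5(\varepsilon) = \|U\|_{L^2}^2 \varepsilon^2$ and $\Lambda_4(\varepsilon) = 8|\mathbb{S}^3|\varepsilon^2|\log\varepsilon|$. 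For $d=5$, minimizing via $\partial_\varepsilon = 0$ gives $\varepsilon_\omega \sim c_5\omega$ and hence $\mathcal{S}-\mathcal{I}_\omega \sim c_5'\,\omega^3$; comparing with the lower bound above pins down the constants in~\eqref{eq9956}--\eqref{eq9955}. For $d=4$ the log factor breaks polynomial scaling, and the balance $A_4 \sim 8|\mathbb{S}^3|\,\omega|\log\varepsilon_\omega|$ forces $|\log\varepsilon_\omega| \sim c/\omega$, i.e.\ $\varepsilon_\omega \sim \exp(-c/\omega)$, with $c$ read off from the coefficient match.

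The main obstacle is the remainder bookkeeping in the lower bound: verifying that all errors---from the Taylor remainder $H-H(0) = O(|x|^\alpha)$, from $\eta_{\varepsilon_\omega}$ (Lemma~\ref{lem0001}), from the $|x|>1$ tail of $PU_{\varepsilon_\omega}$, and from $\hat u_\omega$ (Lemma~\ref{lem0003})---are strictly of lower order than the leading $\varepsilon_\omega^{d-2}$ and $\omega\,\Lambda_d(\varepsilon_\omega)$ contributions. The $d=4$ case is especially delicate because $U_\varepsilon \notin L^2(\mathbb{R}^4)$, so the sharp decay estimate~\eqref{eq9910} must be used systematically to convert $L^2$-divergent integrals of $U_{\varepsilon_\omega}$ into convergent integrals of $PU_{\varepsilon_\omega}$ that isolate only the logarithmic interior contribution.
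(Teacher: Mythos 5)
Your plan transplants the variational matching argument of Lemmas~\ref{lem0004}--\ref{lem0005} (sharp lower bound from the decomposition, upper bound from the test family $W_\ve$, then minimize over $\ve$) to $d=4,5$. The paper deliberately does \emph{not} do this: for $d=3,4,5$ it extracts $\ve_\omega$ from the identity $\|u_\omega\|_X^2-\omega\|u_\omega\|_{L^2(\bbr^d)}^2=\|u_\omega\|_{L^{2d/(d-2)}(\bbr^d)}^{2d/(d-2)}$ (see \eqref{eqnew4460} and the remark following the lemma), in which the term $A_d\ve^{d-2}$ enters the two sides with \emph{different} coefficients ($-1$ on the left, $-\tfrac{2d}{d-2}$ on the right, since $\|PU_\ve\|_X^2=\mathcal{S}^{d/2}-A_d\ve^{d-2}+o(\ve^{d-2})$ while $\|PU_\ve\|_{L^{2^*}}^{2^*}=\mathcal{S}^{d/2}-\tfrac{2d}{d-2}A_d\ve^{d-2}+o(\ve^{d-2})$). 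This mismatch is what produces the balance $\tfrac{d+2}{d-2}A_d\,\ve_\omega^{d-2}=\omega\Lambda_d(\ve_\omega)+o(\cdot)$ and hence the constants $\tfrac{3}{7}$ (for $d=5$) and the factor $3$ in the exponent (for $d=4$) appearing in \eqref{eq9955}.

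The concrete gap is that your route does not reproduce these constants. Minimizing your upper bound $\mathcal{S}+\mathcal{S}^{-(d-2)/2}\bigl(A_5\ve^{3}-\omega\|U\|_{L^2}^2\ve^{2}\bigr)$ over $\ve$ gives the stationarity condition $3A_5\ve=2\omega\|U\|_{L^2}^2$, i.e. $\ve^*=\tfrac{2}{3}\,\omega\|U\|_{L^2}^2/A_5$, whereas \eqref{eq9955} asserts $\ve_\omega=\tfrac{3}{7}\,\omega\|U\|_{L^2}^2/A_5$; for $d=4$ your balance forces $|\log\ve|\sim A_4/(8|\mathbb{S}^3|\omega)$ while \eqref{eq9955} requires $|\log\ve_\omega|\sim 3A_4/(8|\mathbb{S}^3|\omega)$, so your $\ve_\omega$ would be the cube root of the stated one. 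Moreover, the final "comparison" step cannot close even in principle: your lower bound reads $\mathcal{I}_\omega=\mathcal{S}-\tfrac{2}{d}\mathcal{S}^{-(d-2)/2}\bigl(A_d\ve_\omega^{d-2}+\omega\Lambda_d(\ve_\omega)\bigr)+o(\cdot)$, in which \emph{both} corrections are negative and the $\ve$-dependence is monotone, while the upper bound is $\mathcal{S}+\mathcal{S}^{-(d-2)/2}\bigl(A_d\ve^{d-2}-\omega\Lambda_d(\ve)\bigr)$ with a different prefactor and sign structure (the sign of the $A_d\ve^{d-2}$ term flips because the denominator $\|W_\ve\|_{L^{2^*}}^2$ also shrinks, and faster). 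Matching these therefore yields only a one-sided inequality on $\ve_\omega$, not an asymptotic identity, so the argument does not ``pin down the constants.'' To prove the lemma as stated you must, as the paper does, expand both sides of \eqref{eqnew4460} (or an equivalent Pohozaev-type identity) rather than optimize a Rayleigh quotient; and note that the resulting constants are sensitive to the linear-in-$\hat u_\omega$ contributions, which must be killed using the $X$- and $L^2$-orthogonality of Lemma~\ref{lem0002} together with the quantitative bound of Lemma~\ref{lem0003}.
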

\begin{proof}
	Different from the proofs of Lemmas~\ref{lem0004} and \ref{lem0005}, we use the relation (\ref{eq9950}). Moreover, we note that $H(0)$ is the maximum of $H(x)$ by the maximum principle, which implies from (\ref{eq9905}) for $d = 4,5$ that $H(0)>0$. Recall that $H,\eta_{\ve_\omega}\in L^\infty(\bbr^d)\cap C_{loc}^{\alpha}(\bbr^d)$ for $d = 4,5$ with $\forall\alpha\in(0,1)$ by Lemma~\ref{lem0001}.  Then by Lemma~\ref{lem0003}, we have
\begin{eqnarray*}
\|u_\omega\|_{L^{\frac{2d}{d-2}}(\bbr^d)}^{\frac{2d}{d-2}}=\|PU_{\ve_{\omega}}\|_{L^{\frac{2d}{d-2}}(\bbr^d)}^{\frac{2d}{d-2}}
+\frac{2d}{d-2}\int_{\bbr^d}PU_{\ve_{\omega}}^{\frac{d+2}{d-2}}\hat{u}_{\omega}dx
+\mathcal{O}(\omega^2\ve_\omega^{d-\sigma}+\ve_\omega^{d-2+\sigma}).
\end{eqnarray*}
By Lemma~\ref{lem0002}, it follows from \eqref{eq9902} that
\begin{eqnarray*}
\int_{\bbr^d}PU_{\ve_{\omega}}^{\frac{d+2}{d-2}}\hat{u}_{\omega}dx=\int_{\bbr^d}(PU_{\ve_{\omega}}^{\frac{d+2}{d-2}}-U_{\ve_\omega}^{\frac{d+2}{d-2}})\hat{u}_{\omega}dx.
\end{eqnarray*}
Then by similar calculations for \eqref{eq9933} and using \eqref{eq9918}, \eqref{eq9920} and \eqref{eq9935} and Lemma~\ref{lem0002} similar to that of $d=6$,
\begin{align}
\|u_\omega\|_{L^{\frac{2d}{d-2}}(\bbr^d)}^{\frac{2d}{d-2}}&=
\|PU_{\ve_{\omega}}\|_{L^{\frac{2d}{d-2}}(\bbr^d)}^{\frac{2d}{d-2}}
+\mathcal{O}(\omega^2\ve_\omega^{d-\sigma}+\ve_\omega^{d-2+\sigma})\notag\\
&=\mathcal{S}^{\frac{d}{2}}+\mathcal{O}(\ve_{\omega}^4)-\frac{2d}{d-2}\int_{B_R}U_{\ve_\omega}^{\frac{d+2}{d-2}}(\ve_{\omega}^{\frac{d-2}{2}}[d(d-2)]^{\frac{d-2}{4}}H+\eta_{\ve_\omega})dx\notag\\
&\quad +\mathcal{O}(\int_{B_R}U_{\ve_\omega}^{\frac{4}{d-2}}(\ve_{\omega}^{\frac{d-2}{2}}H+\eta_{\ve_\omega})^2+((\ve_{\omega}^{\frac{d-2}{2}} H+\eta_{\ve_\omega})^{\frac{2d}{d-2}})dx)\notag\\
&=\mathcal{S}^{\frac{d}{2}}-\frac{2d^{\frac{d+2}{4}}}{(d-2)^{\frac{6-d}{4}}}\ve_{\omega}^{\frac{d-2}{2}}\int_{B_R}U_{\ve_\omega}^{\frac{d+2}{d-2}}Hdx+\mathcal{O}(\ve_{\omega}^{d-2+\sigma}),\label{eq9948}
\end{align}
where we have used \eqref{eq9930} and $R>0$ is a large constant.  By the regularity of $H$ for $d=4,5$, $H(x)=H(0)+\mathcal{O}(|x|^{\alpha})$ near $|x|=0$.  Therefore, we can choose $\rho>0$ sufficiently small and obtain
\begin{align}
\int_{B_R}U_{\ve_\omega}^{\frac{d+2}{d-2}}Hdx &=H(0)\int_{B_{\rho}}U_{\ve_\omega}^{\frac{d+2}{d-2}}dx+\mathcal{O}(\int_{B_{\rho}}U_{\ve_{\omega}}^{\frac{d+2}{d-2}}|x|^{\alpha}dx)+\mathcal{O}(\int_{B_{R}\backslash B_{\rho}}U_{\ve_{\omega}}^{\frac{d+2}{d-2}}dx)\notag\\
&=\ve_\omega^{\frac{d-2}{2}}H(0)\|U\|_{L^{\frac{d+2}{d-2}}(\bbr^d)}^{\frac{d+2}{d-2}}+\mathcal{O}(\ve_\omega^{\frac{d-2}{2}+\alpha}).\label{eq9949}
\end{align}
It follows from \eqref{eq9950}, \eqref{eq9948} and \eqref{eq9949} that
\begin{eqnarray}\label{eq9951}
\mathcal{I}_{\omega}=\mathcal{S}-\frac{2}{d}\mathcal{S}^{-\frac{d-2}{2}}\ve_\omega^{d-2}[d(d-2)]^{\frac{d-2}{4}}H(0)\|U\|_{L^{\frac{d+2}{d-2}}(\bbr^d)}^{\frac{d+2}{d-2}}
+\mathcal{O}(\ve_\omega^{d-2+\sigma}),
\end{eqnarray}
where $\sigma>0$ is a small constant given by Lemma~\ref{lem0003} and if necessary, $\sigma$ can be taken arbitrary small.
On the other hand, by \eqref{eq0001}, we have
\begin{eqnarray}\label{eqnew4460}
\|u_\omega\|_X^2-\omega\|u_\omega\|_{L^2(\bbr^d)}^2=\|u_\omega\|_{L^{\frac{2d}{d-2}}(\bbr^d)}^{\frac{2d}{d-2}}.
\end{eqnarray}
Moreover, by using \eqref{eq9918}, \eqref{eq9999}, \eqref{eq9920} and \eqref{eq9935}, as in Lemma \ref{lem0005} we obtain
\begin{align*}
\|u_\omega\|_X^2 &= \|PU_{\ve_\omega}\|_X^2+\mathcal{O}(\omega^2\ve_\omega^{d-\sigma}+\ve_\omega^{d-2+\sigma})\\
&=\int_{\bbr^d}U_{\ve_{\omega}}^{\frac{d+2}{d-2}}PU_{\ve_{\omega}}dx+\mathcal{O}(\omega^2\ve_\omega^{d-\sigma}+\ve_\omega^{d-2+\sigma})\\
&=\mathcal{S}^{\frac{d}{2}}+\ve_\omega^{d-2}[d(d-2)]^{\frac{d-2}{4}}H(0)\|U\|_{L^{\frac{d+2}{d-2}}(\bbr^d)}^{\frac{d+2}{d-2}}+\mathcal{O}(\ve_\omega^{\frac{d-2}{2}+\alpha})
\end{align*}
and
\begin{align*}
\|u_\omega\|_{L^2(\bbr^d)}^2&=\|PU_{\ve_\omega}\|_{L^2(\bbr^d)}^2+2\int_{\bbr^d}PU_{\ve_\omega}\hat{u}_{\omega} dx+\mathcal{O}(\omega^2\ve_\omega^{d-\sigma}+\ve_\omega^{d-2+\sigma})\\
&=\left\{\begin{array}{ll} 8|\mathbb{S}^3|\ve_{\omega}^2|\log\ve_{\omega}|+o(|\ve_{\omega}^2|\log\ve_{\omega}|), &\quad d=4,\\
\ve_{\omega}^2\|U\|_{L^2(\bbr^5)}^2+o(\ve_{\omega}^2), &\quad d=5.\end{array}\right.
\end{align*}
By using \eqref{eqnew4460}, we obtain
\begin{align*} & \frac{d+2}{d-2}\ve_\omega^{d-2}[d(d-2)]^{\frac{d-2}{4}}H(0)\|U\|_{L^{\frac{d+2}{d-2}}(\bbr^d)}^{\frac{d+2}{d-2}}+o(\ve_\omega^{d-2}) \\
& = \left\{\begin{array}{ll} 8\omega|\mathbb{S}^3|\ve_{\omega}^2|\log\ve_{\omega}|+o(|\ve_{\omega}^2|\log\ve_{\omega}|), &\quad d=4,\\
\omega\ve_{\omega}^2\|U\|_{L^2(\bbr^5)}^2+o(\ve_{\omega}^2), &\quad d=5,\end{array}\right.
\end{align*}
which, together with \eqref{eq9951}, implies \eqref{eq9956} and \eqref{eq9955}.
\end{proof}

\begin{remark}
Two methods have been used to compute $\ve_\omega$.  The first one is to use the variational formula~\eqref{eq0002} which works for minimizers.  In this method, one need to use test functions in the variational formula to determine $\ve_\omega$.  This method is used for $d\geq6$ in Lemmas \ref{lem0004} and \ref{lem0005}.  The other one is to use the equation~\eqref{eq0001} which works for solutions (not necessary to be minimizers of variational problems) satisfying the decompositions in Lemmas~\ref{lem0002} and \ref{lem0003}.  In this method, we expand the equation~\eqref{eqnew4460} and determine $\ve_\omega$.  This method is used for $d=3,4,5$ in Lemmas \ref{lem0006} and \ref{lem0007}.
\end{remark}

\begin{lemma}\label{lem0007}
For $d=3$, we have
\begin{eqnarray}\label{eq9862}
\mathcal{I}_{\omega}=\mathcal{S}-\mathcal{S}^{-\frac{3}{2}}\frac{3^{\frac{3}{4}}\|G\|_{L^2(\bbr^3)}^4}{40\pi}(\omega-1)^2+o((\omega-1)^2)
\end{eqnarray}
and
\begin{eqnarray}\label{eq9861}
\ve_{\omega}=\frac{3^{\frac{5}{4}}\|G\|_{L^2(\bbr^3)}^2}{20\pi}(\omega-1)+o(\omega-1)
\end{eqnarray}
as $\omega\to1$.
\end{lemma}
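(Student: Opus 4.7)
The plan is to follow the strategy of Lemma~\ref{lem0006} for $d=4,5$, namely to start from the identity
\[
\|u_\omega\|_X^2 - \omega\|u_\omega\|_{L^2(\bbr^3)}^2 = \|u_\omega\|_{L^6(\bbr^3)}^6
\]
coming from testing (\ref{eq0001}) against $u_\omega$, expand each side via the decomposition $u_\omega = PU_{\ve_\omega} + \hat u_\omega$ from Lemma~\ref{lem0002} together with (\ref{eq9918}) and the bound $\|\hat u_\omega\|_X\lesssim(\omega-1)\ve_\omega^{1/2}+\ve_\omega$ from Lemma~\ref{lem0003}, and then invoke (\ref{eq9950}) to convert the leading expansion of $\|u_\omega\|_{L^6}^6$ into that of $\mathcal{I}_\omega$.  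Three features distinguish $d=3$ from $d=4,5$ and must be handled carefully.

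First, $\omega_*=1\neq 0$ forces $(-\Delta+|x|^2)PU_\ve = U_\ve^5 + PU_\ve$, so that $\|PU_{\ve_\omega}\|_X^2$ splits as $\int PU_{\ve_\omega}U_{\ve_\omega}^5\,dx + \|PU_{\ve_\omega}\|_{L^2}^2$.  Second, $\|U\|_{L^2(\bbr^3)}=\infty$, so the $L^2$ contribution cannot be read off from the bare bubble; instead, using $G = |x|^{-1}-H$ together with (\ref{eq9918}) and the fact that $U_\ve - \ve^{1/2}3^{1/4}|x|^{-1}-\eta_\ve$ is concentrated on the scale $|x|\sim\ve$, I expect
\[
\|PU_{\ve_\omega}\|_{L^2(\bbr^3)}^2 = \sqrt{3}\,\|G\|_{L^2(\bbr^3)}^2\,\ve_\omega + o(\ve_\omega),
\]
where finiteness of $\|G\|_{L^2(\bbr^3)}$ comes from (\ref{eq9906}).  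Third, a radial reduction of (\ref{eq9905}) for $d=3$, $\omega_*=1$ (writing $H=f/r$ and $g=f-1$, so that $-g''+(r^2-1)g=0$ with the decaying Hermite solution $g(r)=-e^{-r^2/2}$) yields the explicit formula $H(r) = (1-e^{-r^2/2})/r$.  Thus $H(0)=0$ and $H(x) = \tfrac12|x| + O(|x|^3)$ near the origin, so the dimension-4,5 leading correction proportional to $H(0)$ vanishes and one must pass to the next order:
\[
\int_{\bbr^3} U_{\ve_\omega}^5 H\,dx = \tfrac12\int_{\bbr^3} U_{\ve_\omega}^5|x|\,dx + O(\ve_\omega^{5/2}) = c_0\,\ve_\omega^{3/2} + o(\ve_\omega^{3/2}),
\]
with $c_0$ evaluated from $\int_0^\infty s^3(1+s^2)^{-5/2}\,ds = 2/3$ after the scaling $x=\ve_\omega y$.

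With these pieces in hand, I combine them as follows.  By orthogonality of $\hat u_\omega$ to $PU_{\ve_\omega}$ in $X$ and $L^2$ (and hence $\int U_{\ve_\omega}^5\hat u_\omega\,dx=0$), the linear cross-contributions of $\hat u_\omega$ to $\|u_\omega\|_X^2 - \omega\|u_\omega\|_{L^2}^2$ vanish; together with the pointwise decomposition (\ref{eq9918}) and the estimates of Lemma~\ref{lem0001}, this yields a two-term expansion
\[
\|u_\omega\|_X^2 - \omega\|u_\omega\|_{L^2(\bbr^3)}^2 = \mathcal{S}^{3/2} - (\omega-1)\sqrt{3}\,\|G\|_{L^2(\bbr^3)}^2\,\ve_\omega + \widetilde{A}\,\ve_\omega^2 + o(\ve_\omega^2+(\omega-1)\ve_\omega),
\]
and a parallel expansion of $\|u_\omega\|_{L^6}^6$ (using $H(0)=0$ and the corresponding cross-term bounds) gives $\mathcal{S}^{3/2}+\widetilde{B}\ve_\omega^2+o(\ve_\omega^2)$ for explicit $\widetilde A,\widetilde B$.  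Equating the two and dividing by $\ve_\omega$ produces the linear balance $(\omega-1)\sqrt{3}\,\|G\|_{L^2}^2 = (\widetilde A-\widetilde B)\,\ve_\omega + o(\ve_\omega)$, from which (\ref{eq9861}) follows after evaluating $\widetilde A-\widetilde B = 20\pi/3^{3/4}$; (\ref{eq9862}) is then immediate from (\ref{eq9950}).

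The main obstacle is the bookkeeping at order $\ve_\omega^2$.  Because $H(0)=0$, several contributions that are genuinely subleading for $d=4,5$ enter at the same order in $d=3$: the next-order expansion of $H$ near zero, the self-interaction $\int U_{\ve_\omega}^4 H^2\,dx$, the nonlinear cross term $\int(PU_{\ve_\omega}^5 - U_{\ve_\omega}^5)\hat u_\omega\,dx$, and the quadratic piece $\|\hat u_\omega\|_X^2 - \omega\|\hat u_\omega\|_{L^2}^2$ all scale like $\ve_\omega^2$ under the assumption $\omega-1\sim\ve_\omega$ and must be tracked simultaneously.  The scaling $x=\ve_\omega y$ reduces each to an explicit convergent integral over $\bbr^3$, and combining their coefficients produces the constant that yields $3^{5/4}\|G\|_{L^2(\bbr^3)}^2/(20\pi)$ in (\ref{eq9861}) and $3^{3/4}\|G\|_{L^2(\bbr^3)}^4/(40\pi)$ in (\ref{eq9862}).
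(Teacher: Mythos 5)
Your proposal follows the same skeleton as the paper's proof: the identity $\|u_\omega\|_X^2-\omega\|u_\omega\|_{L^2}^2=\|u_\omega\|_{L^6}^6$ combined with \eqref{eq9950}, the decomposition $u_\omega=PU_{\ve_\omega}+\hat u_\omega$ with the bounds of Lemma~\ref{lem0003}, the leading term $\|PU_{\ve_\omega}\|_{L^2(\bbr^3)}^2=\sqrt{3}\,\|G\|_{L^2(\bbr^3)}^2\ve_\omega+o(\ve_\omega)$, and the expansion of $\|u_\omega\|_{L^6}^6$ through the local behaviour of $H$. The one genuinely different ingredient is your treatment of $H$: you solve \eqref{eq9905} in closed form for $d=3$ (the substitution $H=f/r$, $g=f-1$ reduces it to the Hermite equation $-g''+(r^2-1)g=0$, giving $H(r)=(1-e^{-r^2/2})/r$ and hence $G(r)=e^{-r^2/2}/r$), from which $H(0)=0$ and $H(x)=\tfrac12|x|+\mathcal{O}(|x|^3)$ are immediate. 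The paper instead proves only $H\in C^\alpha_{loc}$, shows $H(x)=H(0)+\tfrac12|x|+\mathcal{O}(|x|^2)$ via a regularity argument for $\psi=H-\tfrac12|x|$, and then deduces $H(0)=0$ \emph{a posteriori} by dividing the balance relation \eqref{eq9860} by $\ve_\omega$ and letting $\omega\to1$. Your route is more elementary and self-contained, and as a bonus makes every constant explicit, e.g.\ $\|G\|_{L^2(\bbr^3)}^2=2\pi^{3/2}$; the paper's route has the advantage of generalizing to situations where no closed form is available. One caveat applies equally to both arguments: the contributions of $\hat u_\omega$ (the quadratic piece $\|\hat u_\omega\|_X^2-\omega\|\hat u_\omega\|_{L^2}^2$ and the cross term $\int(PU_{\ve_\omega}^5-U_{\ve_\omega}^5)\hat u_\omega\,dx$) are a priori only $\mathcal{O}(\ve_\omega^2)$, which is the same order as the signal once $H(0)=0$; you flag that these must be tracked but do not carry out the computation of $\widetilde A-\widetilde B$, which is precisely the step the paper also passes over when its $\mathcal{O}(\ve_\omega^2)$ in \eqref{eq9848} becomes $o(\ve_\omega^2)$ in \eqref{eq9860}.
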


\begin{proof}
Since the proof is similar to that of Lemma~\ref{lem0006}, we only sketch it.
We still use the relations (\ref{eq9950}) and (\ref{I-omega}) rewritten for $d = 3$ as
\begin{eqnarray}\label{eq19950}
\mathcal{I}_{\omega}=\frac{\|u_\omega\|_X^2-\omega\|u_\omega\|_{L^2(\bbr^3)}^2}{\|u_\omega\|_{L^{6}(\bbr^3)}^2}=\|u_\omega\|_{L^{6}(\bbr^3)}^{4}.
\end{eqnarray}
and
\begin{eqnarray}\label{eqnew14460}
\|u_\omega\|_X^2-\omega\|u_\omega\|_{L^2(\bbr^3)}^2=\|u_\omega\|_{L^{6}(\bbr^3)}^{6}.
\end{eqnarray}
As that in the proofs of Lemma~\ref{lem0005} and \ref{lem0006}, by \eqref{eq9935}, we have
\begin{equation*}
\|u_\omega\|_{L^2(\bbr^3)}^2 = \|PU_{\ve_\omega}\|_{L^2(\bbr^3)}^2+2\int_{\bbr^3}PU_{\ve_\omega}\hat{u}_{\omega} dx+\mathcal{O}((\omega-1)^2\ve_\omega^{3-\sigma}+\ve_\omega^{4+\sigma}).
\end{equation*}
By using \eqref{eq9918}, \eqref{eq9999}, \eqref{eq9906} and \eqref{eq9920}, we obtain
\begin{align*}
\|PU_{\ve_\omega}\|_{L^2(\bbr^3)}^2&=\int_{B_{\frac{1}{\ve_{\omega}}}}(U_{\ve_\omega}-\ve_{\omega}^{\frac12}3^{\frac{1}{4}}H-\eta_{\ve_{\omega}})^2+\mathcal{O}(\ve_{\omega}^{\frac{5}{2}})\\
&=\int_{B_{\frac{1}{\ve_{\omega}}}}(U_{\ve_\omega}-\ve_{\omega}^{\frac12}3^{\frac{1}{4}}H)^2dx+\mathcal{O}(\ve_{\omega}^{2-\sigma})\\
&=\ve_{\omega}3^{\frac{1}{2}}\int_{B_{\frac{1}{\ve_{\omega}}}}G^2dx-2\ve_{\omega}^{\frac12}3^{\frac{1}{4}}\int_{B_{\frac{1}{\ve_{\omega}}}}H(U_{\ve_{\omega}}-\ve_{\omega}^{\frac12}3^{\frac{1}{4}}|x|^{-1})dx\\
& \qquad +\int_{B_{\frac{1}{\ve_{\omega}}}}U_{\ve_{\omega}}^2-\ve_{\omega}3^{\frac{1}{2}}|x|^{-2}dx+\mathcal{O}(\ve_{\omega}^{2-\sigma})\\
&=\ve_{\omega}3^{\frac{1}{2}}\int_{\bbr^3}G^2dx+\mathcal{O}(\ve_{\omega}^{2-\sigma}).
\end{align*}
Moreover, similar to that of \eqref{eq9948}, by \eqref{eq9918}, \eqref{eq9999}, \eqref{eq9935} and Lemmas~\ref{lem0002} and \ref{lem0003},
\begin{align}
\|u_\omega\|_{L^{6}(\bbr^3)}^{6}&=\|PU_{\ve_{\omega}}\|_{L^{6}(\bbr^3)}^{6}+\mathcal{O}((\omega-1)^2\ve_{\omega}+\ve_\omega^{2})\notag\\
&=\mathcal{S}^{\frac{3}{2}}-\int_{B_R}6\sqrt[4]{3}\ve_{\omega}^{\frac12}U_{\ve_\omega}^{5}H-15\sqrt{3}\ve_{\omega}^2U_{\ve_\omega}^{4}H^2dx +\mathcal{O}((\omega-1)^2\ve_{\omega}+\ve_{\omega}^{2}).
\label{eq9848}
\end{align}
Since $H$ is not $C^1$, we need to expand $H(x)$ as that in \cite{FKK21}.  We define $\psi=H(x)-\frac{1}{2} |x|$, then by \eqref{eq9905}, $\psi$ satisfies
\begin{eqnarray*}
-\Delta\psi+(|x|^2-1)\psi=\frac{1}{2} |x| (1-|x|^2) \quad\text{in }\bbr^3.
\end{eqnarray*}
Since the data $\frac{|x|-|x|^3}{2}$ belongs to $W^{1,\infty}_{loc}(\bbr^3)$.  Thus, by the classical regularity theory, $\psi\in C^{2,\alpha}_{loc}(\bbr^3)$ for some $\alpha\in(0, 1)$, which, together with $\psi$ being radial, implies $\nabla\psi(0)=0$.  It follows that
\begin{eqnarray*}
H(x)=H(0)+\frac{1}{2}|x|+\mathcal{O}(|x|^2)\quad\text{near }|x|=0.
\end{eqnarray*}
Therefore, we obtain
\begin{eqnarray}\label{eqnew6668}
\int_{B_R}U_{\ve_\omega}^{5}Hdx=\frac{4\pi}{3}H(0)\ve_{\omega}^{\frac{1}{2}}-\frac{4\pi}{3}\ve_{\omega}^{\frac{3}{2}}+\mathcal{O}(\ve_{\omega}^{\frac{5}{2}}|\log\ve_{\omega}|)
\end{eqnarray}
and
\begin{eqnarray}\label{eqnew6667}
\int_{B_R}U_{\ve_\omega}^{4}H^2dx=H(0)^2\pi^2\ve_{\omega}+\mathcal{O}(\ve_{\omega}^2|\log\ve_{\omega}|).
\end{eqnarray}
Now, as that in the proof of Lemma~\ref{lem0006}, we obtain by using  \eqref{eqnew14460} and \eqref{eq9848},
\begin{align}
& \int_{B_R}5\sqrt[4]{3}\ve_{\omega}^{\frac12}U_{\ve_\omega}^{5}H-15\sqrt{3}\ve_{\omega}U_{\ve_\omega}^{4}H^2dx+o(\ve_{\omega}^2) \notag \\
&=
-(\omega-1)\ve_{\omega}3^{\frac{1}{2}}\int_{\bbr^3}G^2dx+o((\omega-1)\ve_{\omega}).\label{eq9860}
\end{align}
Dividing $\ve_{\omega}$ on both sides of \eqref{eq9860} and letting $\omega\to1$, we have $H(0)=0$ by \eqref{eqnew6668} and \eqref{eqnew6667}.  Thus, by \eqref{eqnew6668} and \eqref{eqnew6667} once more, \eqref{eq9860} is reduced to
\begin{eqnarray*}
\frac{20\sqrt[4]{3}\pi}{3}\ve_{\omega}^{2}+o(\ve_{\omega}^2)=(\omega-1)\ve_{\omega}3^{\frac{1}{2}}\int_{\bbr^3}G^2dx+o((\omega-1)\ve_{\omega}),
\end{eqnarray*}
which, together with \eqref{eq19950}, implies that  \eqref{eq9862} and \eqref{eq9861}.
\end{proof}

\begin{remark}
We note that $H(0)$ is a global minimum of $H(x)$ in $\bbr^3$. Indeed, by the maximum principle, it is easy to see that there exists $r_0\geq1$ such that $H(r)$ is strictly increasing in $[0, r_0]$ and is strictly decreasing in $[r_0, +\infty)$.  Thus, by $H(0)=0$ and $H(x)\to0$ as $|x|\to+\infty$, we have that $H(0)$ is actually a global minimum of $H(x)$.
\end{remark}

The proof of Theorem~\ref{thm0001} follows immediately from Lemmas~\ref{lem0004}--\ref{lem0007}.
\hfill$\Box$

\section{The energy-supercritical case}

\subsection{Preliminaries} 
Let $u_{\infty}$ be the singular solution of the stationary equation
(\ref{eqnew0001}) for some $\omega_\infty\in(d-4, d)$ satisfying
(\ref{sing-beh}) for $d \geq 5$. Let $L_{\infty}$ be the associated linear operator
given by
$$
L_{\infty} := - \Delta + |x|^2 - \omega_{\infty} - 3 u_{\infty}^2.
$$
Since $u_{\infty}(r) = \mathcal{O}(r^{-1})$ as $r \to 0$, $u_{\infty} \in C^{\infty}(0,\infty)$, and
$u_{\infty}(r) \to 0$ exponentially fast as $r \to +\infty$, we consider $L_{\infty}$ in the form domain $X_{\rm rad} := \{ f \in X : \text{$f$ is radial} \}$. The singular potential is controlled in the form domain by
using the following Hardy inequality for every $d \geq 3$:
\begin{eqnarray}
\label{Hardy}
\| |\cdot|^{-1} f \|_{L^2(\mathbb{R}^d)} \leq  \frac{2}{d-2} \| \nabla f \|_{L^2(\mathbb{R}^d)},\quad\forall f \in D^{1,2}(\bbr^d).
\end{eqnarray}
where $D^{1,2}(\bbr^d)$ is the same as in (\ref{eq9901}).

In order to justify the definition of Morse index $\mathfrak{m}(u_{\infty})$
according to Definition \ref{def-Morse}, we show that the
linear operator $L_{\infty}$ has a compact resolvent, which
implies that its spectrum of $L_{\infty}$ in $X_{\rm rad}$ is purely discrete
and consists of isolated (simple) eigenvalues.

\begin{lemma}
	\label{lem-compact}
	For every $d \geq 5$, the linear operator $L_{\infty}$ has a compact resolvent in $X_{\rm rad}$.
\end{lemma}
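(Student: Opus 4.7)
I would realize $L_\infty$ as a self-adjoint operator via the quadratic form
$$q(f) := \|f\|_X^2 - \omega_\infty \|f\|_{L^2}^2 - 3\int_{\mathbb{R}^d} u_\infty^2 f^2\,dx$$
on $X_{\rm rad}$, and then invoke the classical characterization that a semibounded self-adjoint operator has compact resolvent if and only if its form domain embeds compactly into the ambient Hilbert space. Two inputs are combined: the compactness of $X_{\rm rad}\hookrightarrow L^2_{\rm rad}(\mathbb{R}^d)$, which is a standard consequence of the purely discrete Hermite spectrum of the harmonic oscillator $M:=-\Delta+|x|^2$; and the Hardy inequality \eqref{Hardy}, which controls the $|x|^{-2}$-type singularity of $3u_\infty^2$ near the origin coming from (\ref{sing-beh}).

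Concretely, the expansion (\ref{sing-beh}) at the origin together with the exponential decay of $u_\infty$ at infinity yields the pointwise bound $u_\infty^2(x)\leq(d-3)|x|^{-2}+C$ on $\mathbb{R}^d$ for some $C=C(d)>0$. Applying \eqref{Hardy} then gives
$$\int_{\mathbb{R}^d} 3 u_\infty^2 f^2 \, dx \;\leq\; \frac{12(d-3)}{(d-2)^2} \int_{\mathbb{R}^d} |\nabla f|^2 \, dx + 3C\int_{\mathbb{R}^d} f^2 \, dx$$
for every $f \in X_{\rm rad}$, so that $q$ is continuous on $X_{\rm rad}$ and $q + C_0 \|\cdot\|_{L^2}^2$ is coercive on $X_{\rm rad}$ as soon as $C_0$ is large enough. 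By the KLMN representation theorem this defines a self-adjoint operator $L_\infty + C_0$ on $L^2_{\rm rad}(\mathbb{R}^d)$ with form domain $X_{\rm rad}$, and the compact embedding $X_{\rm rad}\hookrightarrow L^2_{\rm rad}(\mathbb{R}^d)$ then immediately yields that $(L_\infty + C_0)^{-1}$ is compact, hence so is the resolvent of $L_\infty$.

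The genuine obstacle is that the relative bound $12(d-3)/(d-2)^2$ is strictly less than $1$ only when $d \geq 13$, which matches exactly the dichotomy in Theorem \ref{thm0002}. For $5 \leq d \leq 12$ the form $q$ is not bounded below on $X_{\rm rad}$ and the KLMN argument above fails as stated. To cover this range I would define $L_\infty$ as a suitable self-adjoint extension of the formal operator on $C_c^\infty(\mathbb{R}^d\setminus\{0\})\cap X_{\rm rad}$ (obtained via limit-point analysis after the radial reduction $f(r) = r^{-(d-1)/2} g(r)$, which turns $L_\infty$ into a one-dimensional Schr\"odinger operator on $(0,\infty)$ with potential $r^2 + (d-3)(d-13)/(4r^2)+\text{regular}$), and then transfer compactness of the resolvent from $M$ to $L_\infty$ via the second resolvent identity
$$(L_\infty - z)^{-1} - (M - z)^{-1} = (L_\infty - z)^{-1}(\omega_\infty + 3u_\infty^2)(M - z)^{-1},$$
using that $(M - z)^{-1}$ is a compact map on $L^2_{\rm rad}(\mathbb{R}^d)$ and that Hardy-type estimates bound the middle factor in a compatible norm.
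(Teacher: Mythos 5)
For $d \geq 13$ your KLMN paragraph is, up to packaging, the paper's own argument: the Hardy bound \eqref{Hardy} gives the relative form bound $\tfrac{12(d-3)}{(d-2)^2}<1$, the form is semibounded and closed on $X_{\rm rad}$, and compactness of the resolvent follows from the compact embedding $X_{\rm rad}\hookrightarrow L^2_{\rm rad}(\mathbb{R}^d)$ (the paper phrases this through the quantity $\tau_1$ and the strictly positive shifted operator $L_\infty+\omega_\infty-\tau_1+1$). So that half is fine and matches the paper.

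The substantive issue is the range $5\leq d\leq 12$, and here you have correctly flagged an obstruction that the paper's proof does not actually resolve: since $3(d-3)>\tfrac{(d-2)^2}{4}$ exactly when $d^2-16d+40<0$, i.e.\ for $5\leq d\leq 12$, the inverse-square singularity of $3u_\infty^2$ coming from \eqref{sing-beh} is supercritical for Hardy, and a scaling argument with radial $\phi_\lambda(x)=\lambda^{d/2}\phi(\lambda x)$, $\lambda\to\infty$, shows the quadratic form is unbounded below on $X_{\rm rad}$; this is also forced by Lemma~\ref{lem0008}, since a semibounded operator with compact resolvent has finite Morse index. Your diagnosis that the KLMN route fails there is therefore right (and the paper's assertion that ``$\tau_1>-\infty$ is attained'' is not justified in that range). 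However, your proposed repair is a plan rather than a proof, and it has two concrete gaps. First, in the limit-circle regime at $r=0$ there is a one-parameter family of self-adjoint extensions; you neither specify which extension Definition~\ref{def-Morse} refers to nor check that the compactness statement (and the subsequent Morse index count) is independent of that choice. Second, the second resolvent identity you write down requires $(L_\infty-z)^{-1}(\omega_\infty+3u_\infty^2)$ to extend to a bounded operator on $L^2_{\rm rad}(\mathbb{R}^d)$, which needs domain information about the chosen extension that you have not supplied; ``Hardy-type estimates bound the middle factor'' cannot close this, precisely because Hardy fails to control $3u_\infty^2$ relative to $-\Delta$ with bound less than one in this range. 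A cleaner completion, consistent with your radial reduction, is the classical Weyl/Sturm--Liouville theory: the endpoint $r=0$ is limit-circle, the endpoint $r=\infty$ is limit-point with potential tending to $+\infty$, and every self-adjoint extension of such an operator has purely discrete spectrum (accumulating at $-\infty$ as well as $+\infty$), hence compact resolvent.
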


\begin{proof}
Consider the following variational problem:
	\begin{eqnarray*}
		\tau_1=\inf_{\phi\in X_{\rm rad}} \frac{\int_{\bbr^d}(|\nabla\phi|^2+(|x|^2-3u_\infty^2)|\phi|^2)dx}{\int_{\bbr^d}|\phi|^2dx}.
	\end{eqnarray*}
Since $F(r) := r u_{\infty}(r)$ is monotonically decreasing (cf. \cite{BFPS21,SKW13}), we have $F(r) < F(0) = \sqrt{d-3}$, which implies that $u_{\infty}(r) < \frac{\sqrt{d-3}}{r}$ for every $r > 0$.
By Hardy's inequality (\ref{Hardy}), we obtain
	\begin{eqnarray*}
		\int_{\bbr^d} 3u_\infty^2|\phi|^2dx \leq \int_{\bbr^d}\frac{3(d-3)}{|x|^2}|\phi|^2dx \leq
		\frac{12(d-3)}{(d-2)^2} \| \nabla\phi \|_{L^2(\mathbb{R}^d)}.
	\end{eqnarray*}
By classical variational arguments and the fact that $X$ is compactly embedded into $L^2(\bbr^d)$, we can see that $\tau_1 > -\infty$ is attained.
Since the linear operator
$$
L_{\infty} + \omega_{\infty} - \tau_1 + 1
= -\Delta+|x|^2-3u_\infty^2 - \tau_1 + 1
$$
is strictly positive in $X_{\rm rad}$, the linear equation
	\begin{eqnarray}\label{eq4458}
	-\Delta\psi+(|x|^2-3u_\infty^2-\tau_1 + 1)\psi=\varphi\quad\text{in }X_{\rm rad},
	\end{eqnarray}
	is unique solvable for every $\varphi \in X_{\rm rad}$.  Let $\{\varphi_n\}_{n \in \mathbb{N}}$ be bounded in $X_{\rm rad}$, then it follows by the compactness of the embedding from $X$ to $L^2(\bbr^d)$ that $\varphi_n\to\varphi_*$ as $n \to \infty$ strongly in $L^2(\bbr^d)$.  Since the equation \eqref{eq4458} is linear, we may assume that $\varphi_*=0$.  By the positivity of $L_{\infty} + \omega_{\infty} - \tau_1 + 1$ in $X_{\rm rad}$, $\{\psi_n\}_{n \in \mathbb{N}}$ is bounded in $X_{\rm rad}$.  Since $\varphi_n\to0$ strongly in $L^2(\bbr^d)$ as $n \to \infty$, then $\psi_n\to0$ as $n \to \infty$ strongly in $X_{\rm rad}$.  Therefore, $L_{\infty} + \omega_{\infty} - \tau_1 + 1$ has a compact resolvent in $X_{\rm rad}$, and so does $L_{\infty}$.
\end{proof}

\begin{remark}
	The mapping $d \mapsto \frac{12(d-3)}{(d-2)^2}$ is monotonically decreasing for $d \geq 5$. Since $\frac{12(d-3)}{(d-2)^2} < 1$ for $d \geq 13$,
	we have $\tau_1 > 0$ for $d \geq 13$. However, $\tau_1 < 0$ for $5 \leq d \leq 12$.
\end{remark}

Let $\mathfrak{m}(u_{\infty})$ be the Morse
index of $u_{\infty}$ in $X_{\rm rad}$ according to Definition \ref{def-Morse}.
It is well-defined for $d \geq 5$ because $L_{\infty}$ has a purely discrete spectrum of isolated (simple) eigenvalues by Lemma \ref{lem-compact}.

\subsection{Morse index in the oscillatory case}

The following lemma shows that the Morse index of $u_{\infty}$ is infinite for $5 \leq d \leq 12$, for which $\omega_b$ oscillates near $\omega_{\infty}$
as $b \to \infty$.

\begin{lemma}\label{lem0008}
For $5\leq d\leq12$, we have $\mathfrak{m}(u_\infty)=\infty$.
\end{lemma}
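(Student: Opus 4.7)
The plan is to show directly that the quadratic form $Q(\phi) := \langle L_\infty \phi, \phi \rangle$ on $X_{\rm rad}$ admits an infinite-dimensional subspace of radial functions on which it is strictly negative, which forces $\mathfrak{m}(u_\infty) = \infty$. The driving observation is that as $r \to 0$, the potential $3u_\infty^2$ behaves like $\frac{3(d-3)}{r^2}$ by (\ref{sing-beh}), and for $5 \leq d \leq 12$ the coefficient $3(d-3)$ strictly exceeds the Hardy threshold $\left(\frac{d-2}{2}\right)^2$; equivalently, $d^2 - 16d + 40 < 0$ on this range, so the constant
$$
\mu^2 := \frac{12(d-3) - (d-2)^2}{4} = \frac{16d - 40 - d^2}{4}
$$
is strictly positive. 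This subcritical (in the sense of Hardy) behavior of the inverse-square singularity is the mechanism that produces infinitely many bound states accumulating at the origin.

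First I would perform the Emden--Fowler change of variables $\phi(r) = r^{-(d-2)/2} v(r)$ and $s = \log r$. A direct integration by parts on the radial quadratic form (with the boundary terms vanishing for functions compactly supported in $(0,\infty)$) converts $Q(\phi)$, written in the $w(s) := v(e^s)$ variable, into
$$
Q(\phi) = \int_{-\infty}^{\infty} \Bigl[(w'(s))^2 - \mu^2 w(s)^2\Bigr] ds + \int_{-\infty}^{\infty} \mathcal{E}(s)\, w(s)^2\, ds,
$$
where the error density $\mathcal{E}(s)$ collects the contributions of the harmonic potential $r^2$, the constant $-\omega_\infty$, and the $\mathcal{O}(r^2)$ remainder in $3u_\infty^2 - \frac{3(d-3)}{r^2}$. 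On annuli $r \in [e^{s_0}, e^{s_0+T}]$ with $s_0 \to -\infty$, one has uniformly $|\mathcal{E}(s)| = o(1)$, because $r^2 \to 0$, $\omega_\infty$ contributes a factor $r^2$ after the change of variables, and the regular part of $u_\infty^2$ contributes a factor $r^2$ as well.

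Next I would construct the test-function sequence. Fix $T > \pi/\mu$ and choose $s_n = -nT - n$ (or any sequence tending to $-\infty$ producing disjoint intervals $I_n := [s_n, s_n + T]$). Let $w_n$ be a smooth bump supported in $I_n$ with the property that the Rayleigh quotient $\int (w_n')^2 / \int w_n^2$ is close to $\pi^2/T^2 < \mu^2$ (the precise one-dimensional Dirichlet eigenfunction $\sin(\pi(s-s_n)/T)$, smoothed at the endpoints, does the job). Setting $\phi_n(r) := r^{-(d-2)/2} w_n(\log r)$ gives a smooth, compactly supported, radial function, hence $\phi_n \in X_{\rm rad}$. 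By the leading-order identity and the $o(1)$ control on $\mathcal{E}$, for all $n$ sufficiently large we obtain $Q(\phi_n) < 0$. Because the supports in $r$ are disjoint, $\{\phi_n\}$ spans an infinite-dimensional subspace of $X_{\rm rad}$ on which $Q$ is negative definite, so by the min-max characterization of eigenvalues $L_\infty$ has infinitely many negative eigenvalues and $\mathfrak{m}(u_\infty) = \infty$.

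The only delicate step I expect is verifying that the error functional $\int \mathcal{E}\, w_n^2\, ds$ is indeed negligible relative to the negative leading-order term $(\mu^2 - \pi^2/T^2)\int w_n^2\, ds$. This reduces to showing that $\sup_{s \in I_n} |\mathcal{E}(s)| \to 0$ as $n \to \infty$, which follows from (\ref{sing-beh}) together with the exponential-in-$s$ scaling of the harmonic and $\omega_\infty$ contributions; all these estimates are local near $r = 0$, so neither the decay of $u_\infty$ at infinity nor the global structure of the ODE enters, and the argument sidesteps the non-degeneracy hypotheses invoked in \cite[Theorem~1.3]{BFPS21}.
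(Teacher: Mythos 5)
Your proof is correct, and it takes a genuinely different route from the paper. The paper works with the approximating family $u_b$ of regular solutions: it splits into two cases according to the sign of $\omega_{b_n}-\omega_\infty$, and in each case produces infinitely many negative directions by truncating $u_\infty-u_{b_n}$ between consecutive zeros, using the asymptotics of $u_{b_n}$ from \cite{BFPS21} in one case and a Guo--Wei--type Sturm--Liouville oscillation argument for the ratio $u_b/u_\infty$ in the other (the comparison equation $\phi''+(d-4)\phi'+2(d-3)\phi=0$ has complex characteristic roots exactly when $-d^2+16d-40>0$). You bypass the family $u_b$ entirely and argue directly on the quadratic form: after the Emden--Fowler substitution $\phi=r^{-(d-2)/2}w(\log r)$ the form reduces near the origin to $\int\bigl((w')^2-\mu^2 w^2\bigr)ds$ with $\mu^2=\frac{-d^2+16d-40}{4}>0$ (the same discriminant, now appearing as the statement that $3(d-3)$ exceeds the Hardy constant $\frac{(d-2)^2}{4}$), plus an error of size $\mathcal{O}(e^{2s})$ coming from $r^2$, $\omega_\infty$, and the $\mathcal{O}(1)$ remainder of $3u_\infty^2-\frac{3(d-3)}{r^2}$, all of which vanish as $s\to-\infty$; disjointly supported bumps on annuli shrinking to the origin then give an infinite-dimensional subspace on which the form is negative definite (it is exactly diagonal there by locality), and the min--max principle applies since $L_\infty$ has compact resolvent by the paper's Lemma 3.1. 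Your argument is shorter and more self-contained: it needs only the local expansion (\ref{sing-beh}) and avoids both the case distinction and the delicate asymptotics of $u_{b_n}$. What the paper's approach buys in exchange is the structural link between the infinite Morse index and the oscillation of $\omega_b$ around $\omega_\infty$ in the bifurcation diagram, which is part of the narrative of \cite{BFPS21} that the authors want to make rigorous without nondegeneracy assumptions. The one point worth stating explicitly in a final write-up is that the boundary terms in the integration by parts vanish because your test functions are supported in compact annuli of $(0,\infty)$, and that linear independence plus negativity on the whole span (not just on each $\phi_n$) is what the min--max principle requires; both are immediate from the disjointness of supports.
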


\begin{proof}
We consider the following two cases:
\begin{enumerate}
\item[$(1)$]\quad There exists $b_n\to+\infty$ as $n\to\infty$ such that $\omega_{b_n}-\omega_\infty>0$.
\item[$(2)$]\quad $\omega_b\leq\omega_\infty$ for $b>0$ sufficiently large.
\end{enumerate}

{\bf Case~$(1)$.}  By using equations (5.4), (6.30), and (6.47) from \cite{BFPS21}, we obtain
\begin{eqnarray}\label{eqnew0056}
u_\infty(r) = \frac{\sqrt{d-3}}{r}- \frac{\omega_\infty\sqrt{d-3}}{4d-10}r+\mathcal{O}(r^{3})
\end{eqnarray}
and
\begin{eqnarray}\label{eqnew0052}
u_{b_n}(r) = \frac{\sqrt{d-3}}{r}+ C(\omega_{b_n}-\omega)r^{-\beta-1}\sin(\alpha\log r+\delta)+\mathcal{O}(b_n^{-2(1-a)}+\ve^2),
\end{eqnarray}
for $r = \mathcal{O}(b_n^{a-1})$,
where $|\omega_{b_n}-\omega_\infty| = \mathcal{O}(\ve b_n^{-\beta(1-a)})$,
$C\in \mathbb{R}$, $\delta \in \mathbb{R}$, $\ve>0$ is sufficiently small, $a\in(0, 1)$ and
$$
\beta=\frac{d-4}{2}, \qquad \alpha=\frac{\sqrt{-d^2+16d-40}}{2}.
$$
Let $\varphi_{b_n} := u_\infty-u_{b_n}$.
Since $u_{b_n}(0)=b_n$ and $\omega_{b_n}-\omega_\infty>0$,
it follows from \eqref{eqnew0056} and \eqref{eqnew0052} that there exists $r_{b_n}\to0$ such that $\varphi_{b_n}(r) > 0$ for $r \in (0, r_{b_n})$ and $\varphi_{b_n}(r_{b_n})=0$.  It follows from \eqref{eqnew0001} that
$\varphi_{b_n}$ satisfies for $r \in (0,r_{b_n})$:
\begin{align}
-\Delta\varphi_{b_n}+|x|^2\varphi_{b_n} &= (\omega_\infty+u_\infty^2)\varphi_{b_n}- (u_{b_n}^2-u_\infty^2+\omega_{b_n}-\omega_\infty) u_{b_n}\notag\\
&=(\omega_\infty+3u_\infty^2)\varphi_{b_n}-(2u_\infty^2-u_\infty u_{b_n}-u_{b_n}^2)\varphi_{b_n}-(\omega_{b_n}-\omega_\infty)u_{b_n}\notag\\
&<(\omega_\infty+3u_\infty^2)\varphi_{b_n}. \label{eqnew6666}
\end{align}
Let
\begin{eqnarray*}
\widetilde{\varphi}_{b_n}=\left\{ \begin{array}{ll} \varphi_{b_n}, &\quad 0<r<r_{b_n},\\
0, &\quad r\geq r_{b_n}.
\end{array} \right.
\end{eqnarray*}
Then by multiplying \eqref{eqnew6666} with $\widetilde{\varphi}_{b_n}$ on both sides and integrating by parts, we have
\begin{eqnarray*}
\int_{\bbr^d} \left( |\nabla \widetilde{\varphi}_{b_n}|^2+|x|^2|\widetilde{\varphi}_{b_n}|^2 \right) dx < \int_{\bbr^d}(\omega_\infty+3u_\infty^2)|\widetilde{\varphi}_{b_n}|^2dx.
\end{eqnarray*}
Since $r_{b_n}\to0$ as $n\to\infty$, $\{\widetilde{\varphi}_{b_n}\}$ is linearly independent up to a subsequence.  Hence, $\mathfrak{m}(u_{\infty}) = \infty$. \\

{\bf Case~$(2)$.}  We follow the idea in \cite{GW11}.  Let $W_b=u_b(e^t)$ and $W_\infty=u_\infty(e^t)$, then $Z_b=\frac{W_b}{W_\infty}$ satisfies
\begin{eqnarray}
\label{eqnew0057}
Z_b''+(d-2+\frac{2W_\infty'}{W_\infty})Z_b'+e^{2t}Z_b (\omega_b-\omega_\infty  + W_\infty^2 (Z_b^2-1)) = 0.
\end{eqnarray}
It follows from the convergence $u_b \to u_{\infty}$ in $\Sigma$ 
by \cite[Theorem 1.2]{SKW13}
that $Z_{b_n}(t)\to1$ as $n\to+\infty$ for every fixed $t$.  Moreover, by classical elliptic regularity, we also have $u_b \to u_{\infty}$ in $C^{1,\alpha}_{loc}(\bbr^d\backslash\{0\})$ as $b\to+\infty$.  We claim that there exists $b_n\to+\infty$ as $n\to\infty$ such that $1-Z_{b_n}(t)$ has at least $n$ zeros, say $t_{n,n}<\cdots<t_{2,n}<t_{1,n}$, such that $t_{n,n}\to 0$ as $n\to+\infty$. In other words, we claim that $Z_{b_n}$ is oscillatory around $1$ as $n\to\infty$ on $(-\infty,0)$, in agreement with (\ref{eqnew0052}).

Suppose the contrary.  Then for every sequence $\{b_n\}$ satisfying $b_n\to+\infty$ as $n\to\infty$, there exists $N>0$, independent of $n$, such that $1-Z_{b_n}(t)$ has at most $N$ zeros for all $n$. Since $Z_b(t) = \mathcal{O}(e^{t})$ as $t \to -\infty$ by \cite[(3.9)]{BFPS21} for every $b>0$ there exists $t_0>0$, independent of $n$, such that $0 < Z_{b_n}(t)<1$ for all $t<t_0$ and $n$. If $V_{b_n}=1-Z_{b_n}$, then $0<V_{b_n}(t)<1$ for $t<t_0$.  Moreover, by \eqref{eqnew0057}, $V_{b_n}$ satisifes
\begin{eqnarray*}
V_{b_n}''+(d-2+\frac{2W_\infty'}{W_\infty})V_{b_n}'-e^{2t}Z_{b_n}
(\omega_{b_n}-\omega_\infty - W_\infty^2 (Z_{b_n}+1)V_{b_n})=0.
\end{eqnarray*}
Since $u_{b_n} \to u_{\infty}$ in $C^{1,\alpha}_{\rm loc}(\bbr^d\backslash\{0\})$ as $n\to \infty$, we know that $Z_{b_n}(t)\to1$ as $n\to \infty$ uniformly in every compact set of the interval $(-\infty, t_0]$.  Note that we also have $e^{2t} W_{\infty}^2 \to (d-3)$ as $t\to-\infty$, thus, there exists $t_0'<t_0$ which is independent of $n$, such that $e^{2t} W_{\infty}^2=(d-3)+o(1)$ for $t<t_0'$ where $o(1)\to0$ as $t_0'\to-\infty$.  Thus, without loss of generality, we may assume that $e^{2t}Z_{b_n}W_\infty^2(Z_{b_n}+1)=2(d-3)+o(1)$ uniformly in every compact set of the interval $(-\infty, t_0']$, where $o(1)$ could be arbitrary small if necessary by taking $t_0'$ sufficiently close to $-\infty$ and $n$ sufficiently large.  Note that by \eqref{eqnew0056},
$$
\frac{2W_\infty'(t)}{W_\infty(t)} \to -2, \quad \mbox{\rm as} \;\; t\to-\infty.
$$
Since $\omega_{b_n} \leq \omega_\infty$ is obtained from the assumption, in this case, we can write the equation of $V_{b_n}$ as follows:
\begin{eqnarray*}
V_{b_n}''+(d-4+o(1))V_{b_n}'+(2(d-3)+o(1))V_{b_n}\leq0
\end{eqnarray*}
in every compact set of the interval $(-\infty, t_0']$ by taking $t_0'$ sufficiently close to $-\infty$ if necessary.
Since $5\leq d\leq12$, the fundamental solution of the linear equation,
\begin{eqnarray*}
\phi''+(d-4)\phi'+ 2(d-3)\phi=0,
\end{eqnarray*}
is given by $\phi = C e^{-\beta t}\sin(\alpha t+\delta)$ for some
$C \in \mathbb{R}$ and $\delta \in \mathbb{R}$.  By the Sturm--Liouville theorem, $V_{b_n}$ must have zeros in a sufficiently large compact set of the interval $(-\infty, t_0']$.  It contradicts the assumption that $V_{b_n}(t) >0$ for all $t<t_0'$.  Thus, there exists $b_n\to+\infty$ as $n\to\infty$ such that $1-Z_{b_n}(t)$ has at least $n$ zeros for $t \ll -1$.  We assume the zeros of $Z_{b_n}$ by $0<a_{1,n}<a_{2,n}<\cdots<a_{k_n,n}$ with $k_n\geq n$.  For the sake of simplicity, we also denote $a_{0,n}=0$.  Then we can define
\begin{align*}
\widehat{\varphi}_{n,j}=\left\{\begin{array}{ll} 0, & \quad 0<r\leq a_{j-1,n},\\
u_\infty-u_{b_n}, & \quad a_{j-1,n}<r<a_{j,n},\\
0, & \quad r\geq a_{j,n}\end{array}\right.
\end{align*}
and by the convexity of $t^3$, we have
\begin{eqnarray*}
\int_{\bbr^d} \left( |\nabla \widehat{\varphi}_{n,j}|^2+|x|^2|\widehat{\varphi}_{n,j}|^2 \right) dx < \int_{\bbr^d}(\omega_\infty+3u_\infty^2)|\widehat{\varphi}_{n,j}|^2dx.
\end{eqnarray*}
It follows from $k_n\to\infty$ as $n\to\infty$ that $\mathfrak{m}(u_{\infty}) = \infty$.
\end{proof}

\subsection{Morse index in the monotone case}

By Theorems 1.1 and 1.2 in \cite{PS22}, the Morse index of $u_{\infty}$ is finite for $d \geq 13$ for which $\omega_b$ converges to $\omega_{\infty}$ monotonically as $b \to \infty$. Here we will give a more precise estimates on $\mathfrak{m}(u_{\infty})$ for $d\geq13$.

Let us consider the confluent hypergeometric function, which is also called Kummer's function, given by
\begin{eqnarray*}
M(a; b; x)=\sum_{n=0}^{\infty}\frac{(a)_n}{(b)_n}\frac{x^n}{n!},
\end{eqnarray*}
where $(\alpha)_n=\alpha(\alpha+1)\cdots(\alpha+n-1)$
are Pochhammer symbols.  It is well known (cf. \cite{V16}) that $M(a; b; x)$ is a solution of the confluent hypergeometric differential equation, which is also called the Kummer equation:
\begin{eqnarray*}
x\frac{d^2u}{dx^2}+(b-x)\frac{du}{dx}+au=0.
\end{eqnarray*}
Let
\begin{eqnarray*}
W_{a,l}(r)=r^{l}e^{-\frac{r^2}{2}}M(a; l+\frac{d}{2}; r^2),
\end{eqnarray*}
then it can be directly verified that $W$ satisifies
\begin{eqnarray*}
-W_{a,l}''-\frac{d-1}{r}W_{a,l}'+\frac{l(l+d-2)}{r^2}W_{a,l}+r^2W_{a,l}=(d-4a+2l)W_{a,l}.
\end{eqnarray*}
Let
\begin{eqnarray}\label{eqnew0012}
l_{\pm}=\frac{2-d\pm\sqrt{d^2-16d+40}}{2}
\end{eqnarray}
then $W_{a,l_\pm}$ satisfies
\begin{eqnarray}\label{eqnew0008}
-\Delta W_{a,l_\pm}+|x|^2W_{a,l_\pm}-\frac{3(d-3)}{|x|^2}W_{a,l_\pm}=(d-4a+2l_{\pm})W_{a,l_\pm}.
\end{eqnarray}
\begin{remark}
It is easy to see that $b=l_{\pm}+\frac{d}{2}\not=0,-1,-2,\cdots$.  Otherwise, we have
\begin{eqnarray*}
\frac{d^2-16d+40}{4}-p^2=0
\end{eqnarray*}
for some $p\in\mathbb{Z}$, which implies
\begin{eqnarray*}
d=2(4\pm\sqrt{p^2+6})\in\bbn.
\end{eqnarray*}
It follows that $(\frac{q}{2})^2-p^2=6$ for some $q\in\mathbb{Z}$.  Thus, either $\frac{q}{2}-p=2k$ or $\frac{q}{2}+p=2k$ for some $k\in\bbn$, which implies $4k^2\pm4pk=6$.  It is impossible since $2k^2$ is even but $3\pm 2kp$ is odd.
\end{remark}

If $a\not=0,-1,-2,\cdots$ then
\begin{eqnarray*}
M(a; l_{\pm}+\frac{d}{2}; r^2)\sim\sum_{n=0}^{\infty}n^{l_{\pm}+\frac{d}{2}-a}\frac{r^{2n}}{n!}\gtrsim\sum_{n=0}^{\infty}\frac{(\frac{2}{3}r^2)^{n}}{n!}=e^{\frac{2}{3}r^2}.
\end{eqnarray*}
If $-a\in\bbn$, then $M(-n; l_{\pm}+\frac{d}{2}; r^2)=P_n(r^2)$ is a polynomial of order $2n$.  Therefore, $W_{a,l_\pm}\in L^2(\bbr^d)$ if and only if $-a\in\bbn$.  On the other hand, if $W_{a,l_\pm}\in L^2(\bbr^d)$ is a eigenfunction of the operator $-\Delta+|x|^2-\frac{3(d-3)}{|x|^2}$ in $L^2(\bbr^d)$, then $W_{a,l_\pm}\in L^{\frac{2d}{d-2}}(\bbr^d)$ by the Hardy inequality for $d\geq13$.  However, as $r\to0$,
\begin{eqnarray*}
|r^{l_{-}}e^{-\frac{r^2}{2}}M(-n; l_{-}+\frac{d}{2}; r^2)|^{2^*}\sim r^{2^*l_{-}}\sim r^{-d-\frac{d\sqrt{d^2-16d+40}}{d-2}}>r^{-d}.
\end{eqnarray*}
Thus, by \eqref{eqnew0008},
\begin{eqnarray*}
W_{-n,l_+}=r^{l_{+}}e^{-\frac{r^2}{2}}M(-n; l_{+}+\frac{d}{2}; r^2)
\end{eqnarray*}
is the only eigenfunctions of the operator $-\Delta+|x|^2-\frac{3(d-3)}{|x|^2}$ in $X_{rad}$ with eigenvalues $(d+4n+2l_{+})$, for all $n\in\bbn$.  By \eqref{eqnew0012}, the third eigenvalue $\sigma_3$ is given by
\begin{eqnarray}\label{eqnew0004}
\sigma_3=10+\sqrt{d^2-16d+40}
\end{eqnarray}
and
the fourth eigenvalue $\sigma_4$ is given by
\begin{eqnarray}\label{eqnew0034}
\sigma_4=14+\sqrt{d^2-16d+40}.
\end{eqnarray}
The following lemma gives the estimate on $\mathfrak{m}(u_{\infty})$ for $d \geq 13$.

\begin{lemma}\label{lem0009}
For $d\geq13$, we have
\begin{equation*}
\mathfrak{m}(u_\infty)=\left\{\begin{array}{ll}
1\text{ or }2, &\quad 13\leq d\leq 15,\\
1, &\quad d\geq16.\end{array} \right.
\end{equation*}
\end{lemma}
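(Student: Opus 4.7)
The plan splits into a lower bound $\mathfrak{m}(u_\infty)\geq 1$ and an upper bound via spectral comparison with the explicitly solvable operator $\tilde L:=-\Delta+|x|^2-\tfrac{3(d-3)}{|x|^2}$, whose radial eigenvalues $\{\sigma_n\}$ come from the Kummer analysis already in the excerpt; in particular $\sigma_2=6+\sqrt{d^2-16d+40}$ and $\sigma_3=10+\sqrt{d^2-16d+40}$.

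\emph{Lower bound.} The singularity $u_\infty\sim\sqrt{d-3}/|x|$ together with $d\geq 5$ ensures $u_\infty\in X_{\rm rad}\cap L^4(\bbr^d)$. Testing $L_\infty$ against $u_\infty$ itself and using the equation $-\Delta u_\infty+|x|^2u_\infty=\omega_\infty u_\infty+u_\infty^3$ gives
\[
\langle L_\infty u_\infty,u_\infty\rangle=\int_{\bbr^d}(u_\infty^4-3u_\infty^4)\,dx=-2\|u_\infty\|_{L^4(\bbr^d)}^4<0,
\]
so $\mathfrak{m}(u_\infty)\geq 1$.

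\emph{Upper bound via pointwise comparison.} Following the hint after Theorem \ref{thm0002}, the core step is the pointwise inequality
\[
\omega_\infty+3u_\infty^2(r)\leq \sigma_k+\frac{3(d-3)}{r^2},\qquad r>0,\qquad (\ast_k)
\]
taken with $k=3$ in the regime $d\geq 13$ and, sharpened to $k=2$, in the regime $d\geq 16$. Once $(\ast_k)$ is in hand, $L_\infty\geq \tilde L-\sigma_k$ as quadratic forms on $X_{\rm rad}$; the min--max principle combined with the known spectrum of $\tilde L$ yields $\lambda_n(L_\infty)\geq \sigma_n-\sigma_k$ and, in particular, $\lambda_k(L_\infty)\geq 0$, giving $\mathfrak{m}(u_\infty)\leq k-1$. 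Moreover, since $F(r):=ru_\infty(r)$ is strictly decreasing (from \cite{BFPS21,SKW13}) the comparison is strict, so for $d\geq 16$ one gets $\lambda_2(L_\infty)>0$, yielding as a byproduct the nondegeneracy stated in the remark.

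To verify $(\ast_k)$, I would study $h(r):=\tfrac{3(d-3)}{r^2}-3u_\infty^2(r)$. Monotonicity of $F$ with $F(0)=\sqrt{d-3}$ gives $h(r)>0$ on $(0,\infty)$; the expansion (\ref{eqnew0056}) gives $h(0^+)=\tfrac{3\omega_\infty(d-3)}{2d-5}>0$, and exponential decay of $u_\infty$ at infinity gives $h(r)\to 0^+$ as $r\to\infty$. Hence $\inf_{r>0}h(r)=0$, and $(\ast_k)$ is equivalent to the intrinsic bound $\omega_\infty\leq \sigma_k$.

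\emph{Main obstacle.} The crux is establishing the intrinsic bounds $\omega_\infty\leq \sigma_3$ for $d\geq 13$ and the sharper $\omega_\infty\leq \sigma_2$ for $d\geq 16$. These do not follow merely from the crude enclosure $\omega_\infty\in(d-4,d)$ of \cite{BFPS21,SKW13}, since for small $d$ in each range $\sigma_k$ can lie strictly below $d$ (e.g., $\sigma_3=11<13$ at $d=13$, and $\sigma_2=6+\sqrt{40}<16$ at $d=16$). I would expect to obtain them either via a Pohozaev-type identity adapted to the singular profile (\ref{sing-beh}), or through an ODE/phase-plane analysis of (\ref{eqnew0001}) combining the near-origin expansion (\ref{eqnew0056}) with the harmonic-oscillator decay at infinity and the strict monotonicity of $F$. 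This intrinsic spectral gap between $\omega_\infty$ and the Kummer eigenvalues $\sigma_k$ is precisely what drives the dimensional thresholds $d=13$ and $d=16$ in the statement.
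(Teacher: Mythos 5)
Your lower bound $\mathfrak{m}(u_\infty)\geq 1$ (testing $L_\infty$ with $u_\infty$ itself) is exactly the paper's, and your reduction of the pointwise comparison $(\ast_k)$ to the scalar inequality $\omega_\infty\leq\sigma_k$ is correct, since $\inf_{r>0}\bigl(\tfrac{3(d-3)}{r^2}-3u_\infty^2\bigr)=0$. But the step you flag as the ``main obstacle'' is a genuine gap, not a technicality: your route needs $\omega_\infty\leq\sigma_2=6+\sqrt{d^2-16d+40}$ for $d\geq16$ and $\omega_\infty\leq\sigma_3=10+\sqrt{d^2-16d+40}$ for $13\leq d\leq15$, and (as you note) $\sigma_2\approx d-2$ for large $d$ while the only known enclosure is $\omega_\infty\in(d-4,d)$. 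No Pohozaev identity or phase-plane argument establishing such a refined localization of $\omega_\infty$ is available, and nothing in the paper (or its references) supplies it; the proposal therefore does not close.

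The paper avoids needing these intrinsic bounds altogether. It only uses the comparison at level $\sigma_{k+1}$, where the required inequality $\omega_\infty<d<\sigma_{k+1}$ \emph{does} follow from the crude enclosure: $\sigma_3>d$ exactly when $d\geq16$, and $\sigma_4>d$ for $13\leq d\leq15$. The index lost by moving from $\sigma_k$ to $\sigma_{k+1}$ is recovered by a node-splitting count: arguing by contradiction, if $\tau_2\leq\omega_\infty$ (resp.\ $\tau_3\leq\omega_\infty$), one cuts the eigenfunctions $\phi_j$ of $L_\infty$ at their $j-1$ zeros; the truncated pieces still make the quadratic form of $-\Delta+|x|^2-\tfrac{3(d-3)}{r^2}-\sigma_{k+1}$ negative (the boundary terms vanish at the nodes), and one obtains $1+2=3$ (resp.\ $1+2+3=6$) linearly independent negative directions, exceeding the known Morse index $2$ (resp.\ $3$) of the Kummer operator at level $\sigma_3$ (resp.\ $\sigma_4$). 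This contradiction forces $\tau_2>\omega_\infty$ for $d\geq16$ and $\tau_3>\omega_\infty$ for $13\leq d\leq15$, giving $\mathfrak{m}(u_\infty)=1$ and $\mathfrak{m}(u_\infty)\leq2$ respectively, with nondegeneracy for $d\geq16$ as the same byproduct you anticipated. If you want to salvage your write-up, replace the direct min--max comparison at level $\sigma_k$ by this counting argument at level $\sigma_{k+1}$; everything else you wrote can stay.
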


\begin{proof}
	{\bf Case $d \geq 16$.} Since $F(r) := r u_{\infty}(r)$ is monotonically decreasing (cf. \cite{BFPS21,SKW13}), we have $F(r) < F(0) = \sqrt{d-3}$, which implies that $u_{\infty}(r) < \frac{\sqrt{d-3}}{r}$ for every $r > 0$.  Note that $\omega_{\infty} \in (d-4,d)$ by \cite[Theorem~1.2]{BFPS21}. Then by $\sigma_3 > d$ for $d \geq 16$, as is clear from \eqref{eqnew0004},
we have
\begin{eqnarray}\label{eqnew0006}
\omega_\infty+3u_\infty^2<\sigma_3+3\frac{d-3}{r^2}\quad\text{in }\bbr^d\text{ for }d\geq16.
\end{eqnarray}
Since $L_{\infty}$ has a compact resolvent in $X_{\rm rad}$ by Lemma \ref{lem-compact}, the spectrum of $-\Delta+|x|^2-3u_\infty^2$ in $X_{\rm rad}$ consists of isolated (simple) eigenvalues $\{ \tau_j \}_{j \in \mathbb{N}}$ such that $\tau_j \to \infty$ as $j \to \infty$. For each simple eigenvalue $\tau_j$, there exists a unique eigenfunction $\phi_j\in X_{\rm rad}$ (up to scalar multiplication) which satisfies
\begin{eqnarray*}
-\Delta \phi_j+|x|^2\phi_j-3u_\infty^2\phi_j=\tau_j\phi_j\quad\text{in }\bbr^d.
\end{eqnarray*}
Moreover, $\phi_j$ has exact $j-1$ zeros.  Since
\begin{eqnarray*}
\int_{\bbr^d} \left( |\nabla u_\infty|^2+|x|^2u_\infty^2 \right) dx = \int_{\bbr^d} \left( \omega_\infty u_\infty^2+u_\infty^4 \right) dx < \int_{\bbr^d} \left( \omega_\infty u_\infty^2+3u_\infty^4 \right) dx,
\end{eqnarray*}
we have $\mathfrak{m}(u_{\infty}) \geq 1$ so that $\tau_1 <\omega_\infty$.  Suppose that $\tau_2\leq\omega_\infty$, then it follows from \eqref{eqnew0006}
that
\begin{eqnarray}\label{eqnew0007}
\tau_2+3u_\infty^2<\sigma_3+3\frac{d-3}{r^2}\quad\text{in }\bbr^d\text{ for }d\geq16.
\end{eqnarray}
Recall that $\phi_2$ has exact one zero on $(0,\infty)$ so that we can define
\begin{eqnarray*}
\phi_{2,f}=\left\{\begin{array}{ll} \phi_2, &\quad 0\leq r<r_u,\\
0, &\quad r\geq r_u,\end{array} \right.
\quad\text{and}\quad
\phi_{2,l}=\left\{\begin{array}{ll} 0, & \quad 0\leq r<r_u,\\
\phi_2, &\quad r\geq r_u,\end{array} \right.
\end{eqnarray*}
where $r_u$ is the unique zero of $\phi_2$.  Then by \eqref{eqnew0007}, we have
\begin{eqnarray*}
\int_{\bbr^d} \left( |\nabla \phi_1|^2+|x|^2\phi_1^2 \right) dx = \int_{\bbr^d}(\tau_1+3u_\infty^2)\phi_1^2dx<\int_{\bbr^d}(\sigma_3+3\frac{d-3}{r^2})\phi_1^2dx,
\end{eqnarray*}
\begin{eqnarray*}
\int_{\bbr^d} \left( |\nabla \phi_{2,f}|^2+|x|^2\phi_{2,f}^2 \right) dx=\int_{\bbr^d}(\tau_2+3u_\infty^2)\phi_{2,f}^2dx<\int_{\bbr^d}(\sigma_3+3\frac{d-3}{r^2})\phi_{2,f}^2dx
\end{eqnarray*}
and
\begin{eqnarray*}
\int_{\bbr^d} \left( |\nabla \phi_{2,l}|^2+|x|^2\phi_{2,l}^2 \right) dx=\int_{\bbr^d}(\tau_2+3u_\infty^2)\phi_{2,l}^2dx<\int_{\bbr^d}(\sigma_3+3\frac{d-3}{r^2})\phi_{2,l}^2dx.
\end{eqnarray*}
Since $\phi_1$ is sign-constant and $\phi_{2,f}$ and $\phi_{2,l}$ share the unique zero at $r_u$, the functions $\phi_1$, $\phi_{2,f}$ and $\phi_{2,l}$ are linearly independent.  Indeed, if there exists $c_1$, $c_{2,f}$ and $c_{2,l}$ such that
$$
c_1\phi_1+c_{2,f}\phi_{2,f}+c_{2,l}\phi_{2,l}\equiv 0 \quad \mbox{\rm in} \;\; \bbr^d,
$$
then by $\phi_{2,f}(r_u)=\phi_{2,l}(r_u)=0$, we have $c_1=0$. On the other hand, since $\phi_{2,f}\phi_{2,l}\equiv0$, then we also have $c_{2,f}=c_{2,l}=0$, which implies $\phi_1$, $\phi_{2,f}$ and $\phi_{2,l}$ are linearly independent.  However, $\sigma_3$ is the third eigenvalue of the operator $-\Delta+|x|^2-3\frac{d-3}{r^2}$ in $X_{\rm rad}$, thus,
$\mathfrak{m}(W_{-2,l_+}) = 2$, which is a contradiction.  Therefore, $\tau_2 > \omega_\infty$ for $d\geq16$, which implies $\mathfrak{m}(u_{\infty}) = 1$. \\

{\bf Case $13\leq d\leq15$.} We use the same idea to show that $1 \leq \mathfrak{m}(u_{\infty}) \leq 2$.  Indeed,
since $\sigma_4 > \omega_{\infty}$ for $13 \leq d \leq 15$, as follows from \eqref{eqnew0034}, we have
\begin{eqnarray}\label{eqnew0036}
\omega_\infty+3u_\infty^2<\sigma_4+3\frac{d-3}{r^2}\quad\text{in }\bbr^d\text{ for }13\leq d\leq15.
\end{eqnarray}
If $\tau_3\leq\omega_\infty$, then by \eqref{eqnew0036},
\begin{eqnarray}\label{eqnew0037}
\tau_3+3u_\infty^2<\sigma_4+3\frac{d-3}{r^2}\quad\text{in }\bbr^d\text{ for }13\leq d\leq15.
\end{eqnarray}
The third eigenfunction $\phi_3$, corresponding to $\tau_3$, has exact two zeros $\widetilde{r}_f<\widetilde{r}_l$.
Moreover, by the Stum-Liouville theorem, it is well known that $\widetilde{r}_f<r_u<\widetilde{r}_l$.  Let
\begin{eqnarray*}
\phi_{3,f}=\left\{\begin{array}{ll} \phi_3, &\quad 0\leq r<\widetilde{r}_f,\\
0, &\quad r\geq \widetilde{r}_f,\end{array}\right.
\quad
\quad
\phi_{3,l} =\left\{\begin{array}{ll} 0, &\quad 0\leq r<\widetilde{r}_l,\\
\phi_3, & \quad r\geq \widetilde{r}_l,\end{array} \right.
\end{eqnarray*}
and
\begin{eqnarray*}
\phi_{3,m}=\left\{\begin{array}{ll} 0, &\quad 0\leq r<\widetilde{r}_f,\\
	\phi_3, & \quad r_f\leq r< \widetilde{r}_l,\\
	0, & \quad r\geq \widetilde{r}_l.\end{array}\right.
\end{eqnarray*}
Then by similar arguments as used above, we can show from \eqref{eqnew0037}
that $\mathfrak{m}(W_{-3,l_+}) \geq 6$, which contradicts the fact that  $\mathfrak{m}(W_{-3,l_+}) = 3$.  Thus, we must have $\tau_3 > \omega_\infty$ for $13\leq d\leq15$, which implies that  $\mathfrak{m}(u_{\infty}) \leq 2$.
\end{proof}

\begin{remark}\label{rmk0001}
As a by-product, the proof of Lemma~\ref{lem0009} shows that $\tau_1<\omega_\infty<\tau_2$ for $d\geq16$.  Therefore, the homogeneous
equation $L_{\infty} Z = 0$ has only  trivial solutions in $X_{\rm rad}$ for $d\geq16$.  This implies that $u_\infty$ is nondegenerate in $X_{\rm rad}$ for $d\geq16$ in the following sense. The radial $Z$ satisfies $Z = \mathcal{O}( r^{\omega_\infty-d}e^{-\frac{r^2}{2}})$ as $r\to+\infty$ and there exists $L_-\not=0$ such that
\begin{eqnarray*}
Z=L_-r^{l_-}+O(r^{l_+}, r^{l_-+2})\quad\text{as }r\to0.
\end{eqnarray*}
This argument verifies the non-degeneracy Assumption 2.2 in \cite{PS22} for $d\geq16$. It is not clear if this assumption can be verified for $13 \leq d \leq 15$.
\end{remark}

\vskip0.12in

The proof of Theorem~\ref{thm0002}  follows immediately from Lemmas~\ref{lem0008} and \ref{lem0009}.
\hfill$\Box$

\vskip0.2in

{\bf Acknowledgements.} The research of J. Wei and D. E. Pelinovsky is
partially supported by the NSERC Discovery grants. 
The research of Y. Wu is supported by the NSFC grants (No. 11971339, 12171470).

\end{document}